\renewcommand{\mod}{\operatorname{mod}\nolimits}
\newcommand{\Mod}{\operatorname{Mod}\nolimits}
\newcommand{\gr}{{\operatorname{gr}\nolimits}}
\newcommand{\Gr}{{\operatorname{Gr}\nolimits}}
\newcommand{\lfGr}{{\operatorname{lfGr}\nolimits}}
\newcommand{\add}{\operatorname{add}\nolimits}
\newcommand{\Hom}{\operatorname{Hom}\nolimits}
\newcommand{\End}{\operatorname{End}\nolimits}
\renewcommand{\Im}{\operatorname{Im}\nolimits}
\newcommand{\Ker}{\operatorname{Ker}\nolimits}
\newcommand{\rrad}{\mathfrak{r}}
\newcommand{\rad}{\operatorname{rad}\nolimits}
\newcommand{\Ext}{\operatorname{Ext}\nolimits}
\newcommand{\Tor}{\operatorname{Tor}\nolimits}
\newcommand{\op}{{\operatorname{op}\nolimits}}
\newcommand{\Ab}{{\operatorname{Ab}\nolimits}}
\newcommand{\comp}{\operatorname{\scriptstyle\circ}}
\newcommand{\G}{\Gamma}
\renewcommand{\L}{\Lambda}
\newcommand{\Z}{{\mathbb Z}}
\newcommand{\A}{{\mathcal A}}
\newcommand{\C}{{\mathcal C}}
\newcommand{\D}{{\mathcal D}}
\newcommand{\E}{{\mathcal E}}
\newcommand{\I}{{\mathcal I}}
\newcommand{\Li}{{\mathcal L}}
\newcommand{\wK}{w{\mathcal K}}
\renewcommand{\S}{{\mathcal S}}
\newcommand{\T}{{\mathcal T}}
\newcommand{\extto}{\xrightarrow}
\newtheorem{lem}{Lemma}[section]
\newtheorem{prop}[lem]{Proposition}
\newtheorem{cor}[lem]{Corollary}
\newtheorem{thm}[lem]{Theorem}
\theoremstyle{definition}
\newtheorem{defin}[lem]{Definition}
\newtheorem{example}[lem]{Example}
\newtheorem*{remark}{Remark}
\begin{document}

\title{Graded and Koszul categories}
\author[Mart\'inez-Villa]{Roberto Mart\'inez-Villa}
\address{Roberto Mart\'inez-Villa, Instituto de Matem\'aticas,
  Universidad Nacional Autonoma de Mexico, Campus Morelia,
Apartado Postal 27-3 (Xangari), C.P. 58089, Morelia, Michoac\'an, Mexico}
\email{mvilla@matmor.unam.mx}
\thanks{The first author thanks the Universidad Nacional Autonoma de
  Mexico program PAPITT for funding
  the research project. In addition he thanks his coauthor and
  Department of Mathematical Sciences (NTNU) for their kind
  hospitality and support through Research Council
  of Norway Storforsk grant no.\ 167130.}
\author[Solberg]{\O yvind Solberg}
\address{\O yvind Solberg\\Institutt for matematiske fag\\
NTNU\\ N--7034 Trondheim\\ Norway}
\email{oyvinso@math.ntnu.no}
\thanks{The second author thanks Universidad Nacional Autonoma de
Mexico and his coauthor for their kind hospitality and support, and
the Department of Mathematical Sciences (NTNU) and Research Council
of Norway Storforsk grant no.\ 167130 also for the support.}
\keywords{Graded categories, Koszul theory, quadratic categories}
\subjclass[2000]{18A25, 18G10, 18G20}
\date{\today}

\begin{abstract}
Koszul algebras have arisen in many contexts; algebraic geometry,
combinatorics, Lie algebras, non-commutative geometry and
topology. The aim of this paper and several sequel papers is to show
that for any finite dimensional algebra there is always a naturally
associated Koszul theory. To obtain this, the notions of Koszul
algebras, linear modules and Koszul duality are extended to additive
(graded) categories over a field. The main focus of this paper is to
provide these generalizations and the necessary preliminaries.
\end{abstract}

\maketitle

\section*{Introduction}
Koszul theory is usually applied to graded algebras $\L$ which are
Koszul. The theory has been extended to non-graded semiperfect
Noetherian algebras $\L$ through the notion of weakly Koszul algebras
\cite{MVZ}.  The aim of this paper is to show that Koszul theory can
be applied to any finite dimensional algebra by associating a Koszul
object and therefore a Koszul theory for any finite dimensional
algebra. This theory is found by considering the category of all
additive contravariant functors from finitely generated $\L$-modules
to vector spaces. The simple objects in this category are known to be
weakly Koszul by a result of Igusa-Todorov (\cite{IT}). Similarly as
for algebras, we then pass to a naturally associated graded category,
where the simple objects are linear. In this way Koszul theory can be
applied to study any finite dimensional algebra. This application
serves as a motivation for most of the definitions and the results in
the present paper and the subsequent papers based on the current
paper. The generalization of the Koszul theory we introduce goes
through extending the notions of Koszul algebras, linear modules and
Koszul duality, to additive $K$-categories over a field $K$. For
related work, but a different focus, we point out the work of
Mazorchuk, Ovsienko and Stroppel in \cite{MOS}.

The process of associating a Koszul object to any finite dimensional
algebra goes through utilizing the analogy with algebras. As mentioned
above, Koszul theory has been extended to non-graded finite
dimensional algebras $\L$ through the notion of weakly Koszul algebras
\cite{MVZ}: $\L$ is \emph{weakly Koszul} if all simple $\L$-modules
$S$ have a minimal projective resolution $\cdots \to P_2\to P_1\to
P_0\to S\to 0$ satisfying $\rrad^{i+1}P_j\cap
\Omega^{j+1}_\L(S)=\rrad^i\Omega^{j+1}_\L(S)$ for all $j\geq 0$ and
$i\geq 0$, where $\rrad$ is the Jacobson radical of $\L$. If $\L$ is
weakly Koszul, then the associated graded ring
$\A_\gr(\L)=\amalg_{i\geq 0} \rrad^i/\rrad^{i+1}$ is Koszul. Extending
this to $\Mod(\mod\L)$, the category of all additive contravariant
functors from the category of finitely generated $\L$-modules,
$\mod\L$, to vector spaces over the field and using results of
Igusa-Todorov \cite{IT}, we show that $\Mod(\mod\L)$ is weakly
Koszul. Again as for algebras, we show that the naturally associated
graded category is Koszul. To demystify this object we describe it for
finite representation type. For a finite dimensional algebra $\L$ of
finite representation type, the category $\Mod(\mod\L)$ is equivalent
to the category of modules over the Auslander algebra $\G$ of
$\L$. Which by the results of Igusa-Todorov is weakly Koszul. Then the
associated Koszul object we describe for this finite dimensional
algebra is equivalent to the module category of graded modules over
the associated graded ring $\A_\gr(\G)$, which is Koszul.

Next we describe the organization of the paper. In Section
\ref{section1} we recall definitions of graded categories and
functors, and in addition discuss fundamental concepts and results as
Yoneda's Lemma, ideals, tensor products, Nakayama's Lemma. While
Section \ref{section1.5} deals with projective and simple objects,
duality and homological dimensions. Section \ref{section2} is devoted
to defining and proving basic results about Koszul categories, where
we end with a brief discussion on our main application appearing in a
subsequent paper.  An analogue of weakly Koszul algebras for
categories is introduced in Section \ref{section3}. The graded
categories we consider are generated in degrees $0$ and $1$, and with
the further assumption we impose they are quotients of free tensor
categories over a bimodule. These categories and the Koszul dual of
such are discussed in the the last section.

Finally in this introduction we mention the standing assumptions
throughout the paper. An additive (graded) $K$-category $\C$ is said
to be \emph{Krull-Schmidt} if any object in $\C$ is a finite direct
sum of objects with a (graded) local endomorphism ring. Throughout we
are assuming that in any category we consider, all idempotents
split. Under this assumption using \cite[Theorem 27.6]{AF} it follows
that an additive (graded) $K$-category is Krull-Schmidt if and only if
$\End_\C(C)$ (or in the graded case $\End_\C(C)_0$) is semiperfect for
all $C$ in $\C$. Throughout we assume that all categories we consider
are skeletally small.

\section{Graded categories and functors}\label{section1}

This section is devoted to recalling definitions of graded categories
and functors between graded categories. Throughout the paper $K$
denotes a fixed field. In further detail, after graded categories and
functors between them are introduced, we discuss Yoneda's Lemma,
ideals, tensor products of functors and Nakayama's Lemma. All modules
throughout the paper are left modules unless otherwise explicitly
said otherwise. 

\subsection{Graded categories} First we introduce graded categories.
Let $\C$ be a $K$-category. The category $\C$ is called \emph{graded}
if for each pair of objects $C$ and $D$ in $\C$ we have
\[\Hom_\C(C,D)=\amalg_{i\in\Z}\Hom_\C(C,D)_i\]
as a $\Z$-graded vector space over $K$, such that if $f$ is in
$\Hom_\C(C,C')_i$ and $g$ is in $\Hom_\C(C',C'')_j$, then $gf$ is in
$\Hom_\C(C,C'')_{i+j}$. In particular the identity maps are
concentrated in one degree and this degree is $0$.

Just having a graded $K$-category is normally too general, so further
conditions are often needed. The following two restrictions are
central in what follows.
\begin{defin}
\begin{enumerate}
\item[(a)] A graded $K$-category $\C$ is \emph{locally finite} if
$\Hom_\C(C,D)_j$ is finite dimensional over $K$ for all objects $C$
and $D$ in $\C$ and all integers $j$.
\item[(b)] A graded $K$-category $\C$ is called \emph{positively
graded} if
\[\Hom_\C(X,Y)=\amalg_{i\geq 0}\Hom_\C(X,Y)_i\]
for all objects $X$ and $Y$ in $\C$.
\end{enumerate}
\end{defin}
Below we give some examples of graded categories where we return to some
of them later. To this end we fix the following notation. For a
$\Z$-graded module over a $\Z$-graded ring we define the $i$-th shift
functor $[i]$ as follows: $M[i]_j=M_{i+j}$ and for a homomorphism
$f\colon M\to N$ of graded modules $f[i]=f\colon M[i]\to N[i]$.

\begin{example} The category of graded modules over a graded ring is the
first example we review.  Let $\L=\amalg_{i\geq 0} \L_i$ be a
positively graded algebra over $K$. Denote by $\Gr(\L)$ the category
having as objects all $\Z$-graded $\L$-modules and
\[\Hom_{\Gr(\L)}(M,N) = \amalg_{i\in\Z}\Hom_{\Gr(\L)_0}(M,N[i]),\]
where $\Gr(\L)_0$ denotes the category of graded $\L$-modules with
degree zero homomorphisms. Then $\Gr(\L)$ is a graded category with
\[\Hom_{\Gr(\L)}(M,N)_i=\Hom_{\Gr(\L)_0}(M,N[i])\]
for all $M$ and $N$ in $\Gr(\L)$. Note here that even though $\L$ is
positively graded, the category $\Gr(\L)$ is never positively
graded (as long as $\L\neq (0)$).
\end{example}

\begin{example}\label{exam:assgraded} Here we define the associated
graded category of an additive category $\C$ with respect to the
radical of $\C$. This construction can in fact be done with respect to
any ideal in the category $\C$. See subsection \ref{subsec:ideals} for
a short discussion about ideals in a category.

Let $\C$ be an additive $K$-category, and denote by
$\rad_\C$ the radical of $\C$. Recall that the radical, $\rad_\C(-,-)\colon
\C^\op\times \C\to \Ab$, as a subfunctor of $\Hom_\C(-,-)$ is given by
\[\rad_\C(C,D) = \{f\in\Hom_\C(C,D)\mid gf \in \rad(\End_\C(C)) \text{\ for
all\ }  g \in \Hom_\C(D,C)\}.\]
Observe that we also have
\[\rad_\C(C,D)= \{f\in\Hom_\C(C,D)\mid fh \in
\rad(\End_\C(D)) \text{\ for all\ }  h \in \Hom_\C(D,C)\}.\]
(see \cite{M}). Therefore $\rad_\C=\rad_{\C^\op}$. Furthermore, a
morphism $f$ in $\Hom_\C(C,D)$ is in $\rad_\C^2(C,D)$ if and only if $f$
is a finite sum of maps of the form $C\extto{f'_i} X\extto{f''_i} D$
with $f'_i$ in $\rad_\C(C,X)$ and $f''_i$ in $\rad_\C(X,D)$ for
$i=1,2,\ldots,n$ and $f=\sum_{i=1}^nf''_if'_i$. Inductively define
$\rad_\C^n=\rad_\C\cdot \rad_\C^{n-1}$.

The associated graded category, $\A_\gr(\C)$, of $\C$ (with respect to
the radical) has the same objects as $\C$ while the morphisms are
given by
\[\Hom_{\A_\gr(\C)}(A,B)=\amalg_{i\geq 0}\rad_\C^i(A,B)/\rad_\C^{i+1}(A,B).\]
Then $\A_\gr(\C)$ is a positively graded category with
\[\Hom_{\A_\gr(\C)}(A,B)_i=\rad_\C^i(A,B)/\rad_\C^{i+1}(A,B)\]
for all objects $A$ and $B$ in $\C$.

The prime example and application of this construction for us, is to
consider a finite dimensional algebra $\L$ and let $\C=\Mod(\mod\L)$,
the category of all additive functors from $(\mod\L)^\op$ to $\Mod
K$. In Section \ref{section2} and in subsequent papers we return to
this example. More generally, for a $K$-category $\D$, we later
consider the category, and subcategories of, $\Mod(\D)$, which denotes
the category of all additive functors from $\D^\op$ to $\Mod K$.
\end{example}

\begin{example}\label{exam:extcat}
  The final example deals with the $\Ext$-category
  associated to a subcategory of an abelian category. The Koszul dual
  of a Koszul category, that we define in Section \ref{section2}, is
  obtained in this way.

Let $\C$ be an abelian category. For a
full subcategory $\C'$ consider the $\Ext$-category $E(\C')$ of $\C'$,
which has the same objects as $\C'$ and the homomorphisms are given by
\[\Hom_{E(\C')}(A,B)=\amalg_{i\geq 0}\Ext^i_\C(A,B)\]
for all objects $A$ and $B$ in $E(\C')$.  Then $E(\C')$ is a positively
graded category with
\[\Hom_{E(\C')}(A,B)_i=\Ext^i_\C(A,B)\]
for all objects $A$ and $B$ in $E(\C')$.

Here again one of the most important example and application for us is
the category $\C=\Mod(\mod\L)$ for a finite dimensional algebra $\L$
and $\C'$ the full subcategory consisting of all simple functors.
\end{example}

\begin{remark}
For both applications to finite dimensional algebras in Example
\ref{exam:assgraded} and Example \ref{exam:extcat} the graded parts of
the categories in question are always semisimple.
\end{remark}

\subsection{Functor categories of graded categories}

Next we discuss functors between graded categories. Let $\C$ and $\D$
be two graded $K$-categories. A covariant functor $F\colon \C\to \D$
of graded categories is a functor between the (ungraded) categories
$\C$ and $\D$ such that $F$ induces a degree zero homomorphism of the
$\Z$-graded vector spaces $\Hom_\C(C,D)$ and $\Hom_\D(F(C),F(D))$;
that is,
\begin{multline}
F\colon \Hom_\C(C,D)=\amalg_{i\in\Z} \Hom_\C(C,D)_i\to\notag\\
\amalg_{i\in\Z}\Hom_\C(F(C),F(D))_i = \Hom_\D(F(C),F(D))\notag
\end{multline}
is a degree zero homomorphism.  A contravariant functor between graded
$K$-categories is defined similarly.

\begin{example}
Let $\C$ be a graded $K$-category. For an object $C$ in $\C$ the
representable functors $\Hom_\C(-,C)\colon \C^\op\to \Gr(K)$ and
$\Hom_\C(C,-)\colon \C\to \Gr(K)$ are covariant functors from the
graded $K$-categories $\C^\op$ and $\C$ into the category of graded
$K$-vector spaces, respectively.
\end{example}

Let $\C$ be an additive graded $K$-category. Denote by $\Gr(\C)_0$ the
category having as objects the additive graded functors $F\colon
\C^\op\to \Gr(K)$ and morphisms being the natural transformations
$\eta\colon F\to G$ of $F$ and $G$, where $\eta_C\colon F(C)\to G(C)$
is a degree zero homomorphism for each object $C$ in $\C$. This is an
abelian category.

Given $F$ in $\Gr(\C)_0$ we define a shift operation $[j]$ for any
integer $j$ on the functor $F$ by letting
\[(F[j])(C)=F(C)[j]\]
and
\[(F[j])(f)=F(f)[j]\]
for any objects $C$ and $D$ in $\C$ and any map $f\colon C\to D$. In
other words $F[j]=[j]\comp F$, where the last $[j]$ denotes the $j$-th
shift operator in $\Gr(K)$.

We define the category $\Gr(\C)$ as the category with the same objects
as $\Gr(\C)_0$ and morphisms given by
\[\Hom_{\Gr(\C)}(F,G)=\amalg_{i\in\Z}\Hom_{\Gr(\C)_0}(F,G[i])\]
for all objects $F$ and $G$ in $\Gr(\C)$.  In this way $\Gr(\C)$
becomes a graded $K$-category.

\subsection{Yoneda's Lemma} The Yoneda's Lemma is fundamental in the
theory of functors. Here we give a graded version, and the proof is
given for completeness.

\begin{lem}\label{lem:Yoneda}
Let $\C$ be a graded $K$-category. The morphism
\[\alpha\colon \Hom_{\Gr(\C)}(\Hom_\C(-,C),F)\simeq F(C)\]
given by $\alpha(\eta)=\eta_C(1_C)$ for any $\eta\colon \Hom_C(-,C)\to
F$, is a degree zero isomorphism for any $C$ in $\C$ and for any $F$
in $\Gr(\C)$.
\end{lem}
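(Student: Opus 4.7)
The plan is to imitate the classical proof of Yoneda's lemma, constructing an explicit inverse $\beta$ to $\alpha$ and then using the naturality of transformations to see that $\alpha\comp\beta=\id$ and $\beta\comp\alpha=\id$; the only genuinely new content is bookkeeping of degrees.

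First I would verify that $\alpha$ is well-defined and has degree zero. By definition
\[\Hom_{\Gr(\C)}(\Hom_\C(-,C),F)=\amalg_{i\in\Z}\Hom_{\Gr(\C)_0}(\Hom_\C(-,C),F[i]),\]
so a homogeneous element $\eta$ of degree $i$ is a degree zero natural transformation $\eta\colon \Hom_\C(-,C)\to F[i]$. In particular $\eta_C\colon \Hom_\C(C,C)\to F[i](C)=F(C)[i]$ is a degree zero $K$-linear map, and since $1_C$ lies in $\Hom_\C(C,C)_0$ we get $\eta_C(1_C)\in F(C)[i]_0=F(C)_i$. Hence $\alpha$ sends the degree $i$ part of the source to $F(C)_i$, as required.

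Next I would construct the inverse. For a homogeneous $x\in F(C)_i$ define $\beta(x)\colon \Hom_\C(-,C)\to F[i]$ by
\[\beta(x)_D(f)=F(f)(x)\]
for any object $D$ and $f\in\Hom_\C(D,C)$. Because $F$ is a graded functor, $F(f)$ has the same degree as $f$, so if $f\in\Hom_\C(D,C)_j$ then $F(f)(x)\in F(D)_{i+j}=F[i](D)_j$; hence $\beta(x)_D$ is a degree zero $K$-linear map. Naturality of $\beta(x)$ is just the functoriality $F(g)F(f)=F(fg)$ applied to $x$, and extending $\beta$ $K$-linearly and then taking coproducts over $i$ produces a degree zero $K$-linear map $\beta\colon F(C)\to \Hom_{\Gr(\C)}(\Hom_\C(-,C),F)$.

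Finally I would check the two compositions. The identity $\alpha\beta=\id$ is immediate: $\alpha(\beta(x))=\beta(x)_C(1_C)=F(1_C)(x)=x$. For $\beta\alpha=\id$, given a homogeneous $\eta$ of degree $i$ and $f\in\Hom_\C(D,C)$, the naturality square
\[\xymatrix{\Hom_\C(C,C)\ar[r]^-{\eta_C}\ar[d]_{\Hom_\C(f,C)} & F[i](C)\ar[d]^{F[i](f)}\\
\Hom_\C(D,C)\ar[r]^-{\eta_D} & F[i](D)}\]
evaluated at $1_C$ gives $\eta_D(f)=F[i](f)(\eta_C(1_C))=F(f)(\alpha(\eta))=\beta(\alpha(\eta))_D(f)$. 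Since $D$ and $f$ are arbitrary, $\beta\alpha(\eta)=\eta$. Thus $\alpha$ and $\beta$ are mutually inverse degree zero maps of graded $K$-vector spaces.

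The only real obstacle is keeping the grading conventions straight, especially the interaction of the shift functor $[i]$ on $F$ with the degree of morphisms in $\C$; once one checks that $F(f)$ carries the degree of $f$ and that $\Hom_\C(-,C)$ itself is a graded functor, the classical argument goes through verbatim.
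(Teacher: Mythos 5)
Your proof is correct and is essentially the same as the paper's: the degree-zero check, the assignment $h\mapsto F(h)(\mu)$, and the naturality square evaluated at $1_C$ are exactly the ingredients used there. The only difference is organizational — you package the two computations as a two-sided inverse $\beta$, whereas the paper verifies injectivity and surjectivity of $\alpha$ on homogeneous elements separately.
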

\begin{proof}
Consider a homogeneous element $\psi$ of degree $j$ in
\[\Hom_{\Gr(\C)}(\Hom_\C(-,C),F)=\amalg_{i\in\Z}
\Hom_{\Gr(\C)_0}(\Hom_\C(-,C),F[i]).\]
\sloppy This means that $\psi=\{\psi_X\}_{X\in\C}$ and each $\psi_X$
is a natural transformation with $\psi_X\colon \Hom_\C(X,C)\to
F[j](X)$ a degree zero homomorphism for all $X$ in $\C$.  In
particular, $\alpha(\psi)=\psi_C(1_C)$ is in $F[j](C)_0=F(C)_j$, and
therefore $\alpha$ is a degree zero homomorphism of $K$-vector
spaces. Hence to show that $\alpha$ is an isomorphism, it is enough to
consider homogeneous elements.

Next we show that $\alpha$ is injective. Suppose that $\alpha(\psi)=0$
for a homogeneous element $\psi$ of degree $j$ in
$\Hom_{\Gr(\C)}(\Hom_\C(-,C),F)$. We want to prove that $\psi_X=0$ for
all $X$ in $\C$. Let $h=\{h_i\}_{i\in\Z}$ be in
$\Hom_\C(X,C)=\amalg_{i\in\Z}\Hom_\C(X,C)_i$. Then
we have the following commutative diagram for all $i$ in $\Z$
\[\xymatrix{%
\Hom_\C(C,C) \ar[r]^{\psi_C} \ar[d]^{\Hom_\C(h_i,C)} &
F[j](C)\ar[d]^{F[j](h_i)} \\
\Hom_\C(X,C)[i] \ar[r]^{\psi_X} & F[j](X)[i]}\]
so that $F[j](h_i)\psi_C(1_C)=\psi_X(h_i)$. Since $\psi_C(1_C)=0$, we
have that $\psi_X(h_i)=0$ for all $i$ and all $X$ in $\C$. Hence
$\psi=0$, and $\alpha$ is injective.

Finally we prove that $\alpha$ is surjective. Let $\mu$ be in
$F(C)_j$. Define $\psi=\{\psi_X\}_{X\in\C}\colon \Hom_\C(-,C)\to F[j]$
for all $X$ in $\C$ and $h=\{ h_i\}_{i\in\Z}$ in $\Hom_\C(X,C)$ by
letting
\[\psi_X(h)=(F(h_i)(\mu))_{i\in\Z}=F(h)(\mu)\]
viewing $F(h)(\mu)$ as an element in $F[j](X)$.  To see that this all
makes sense observe the following. For $h_i$ in $\Hom_\C(X,C)_i$ the
image of $h_i$ under $F$ is a map $F(h_i)\colon F(C)\to F(X)$ of
degree $i$. Hence $F(h_i)(\mu)$ is in $F(X)_{i+j}=F[j](X)_i$, and
$F(h)(\mu)$ is in $F[j](X)$. Clearly we have that
$\alpha(\psi)=\mu$. So if $\psi$ is a natural transformation the proof
is complete. To this end let $g\colon Y\to X$ be in $\C$ and consider
the diagram
\[\xymatrix{%
\Hom_\C(X,C)\ar[r]^{\psi_X}\ar[d]^{\Hom_\C(g,C)} &
F[j](X)\ar[d]^{F[j](g)} \\
\Hom_\C(Y,C)\ar[r]^{\psi_Y} & F[j](Y)}\]
For $h\colon X\to C$ in $\C$ we obtain that
\[F[j](g)\psi_X(h)=F(g)(F(h)(\mu))=F(hg)(\mu)=\psi_Y(hg)=
\psi_Y(\Hom_\C(g,C)(h)).\]
Hence $\psi$ is a natural transformation, and $\alpha$ is a degree
zero isomorphism.
\end{proof}

\subsection{Ideals}\label{subsec:ideals}
Secretly we have considered ideals in a category already as we have
discussed the radical of a category. Here we review the definition of
a (graded) ideal in a category and some elementary constructions and
results involving ideals.

Recall that an ideal in a category is a sub-bifunctor of the
$\Hom$-functor. In case $\C$ is a $\Z$-graded $K$-category, a graded
ideal in $\C$ is a graded sub-bifunctor of the $\Hom$-functor.

For two ideals $\I_1$ and $\I_2$ in a category we define in a natural
way inclusion, intersection and product. In particular, the product of
two ideals $\I_1$ and $\I_2$ is given by
\[\I_1\I_2(X,Y)=\{f\in\Hom_\C(X,Y)\mid f=\sum_{i=1}^nh_ig_i,
g_i\in\I_2(X,A_i), h_i\in\I_1(A_i,Y)\},\]
and if $\I_1$ and $\I_2$ are graded ideals, then their product
$\I_1\I_2$ is a graded ideal with
\begin{multline}
\I_1\I_2(X,Y)_t=\{f\in\Hom_\C(X,Y)_t\mid \\
f=\sum_{i=1}^nh_ig_i, g_i\in\I_2(X,A_i)_{n_i},
h_i\in\I_1(A_i,Y)_{t-n_i}\}.\notag
\end{multline}
For an $n$-fold product of an ideal $\I$ with itself we write $\I^n$.

Let $\I$ be an ideal in a category $\C$. For a functor $F\colon
\C^\op\to \Mod K$ define $\I F\colon \C^\op\to \Mod K$ as the
subfunctor of $F$ given by
\[\I F(M)=\sum_{\substack{f\in\I(M,X)\\ X\in \C}} \Im F(f)\]
for all objects $M$ in $\C$.  If $\I$ is a graded ideal and $F\colon
\C^\op\to \Gr(K)$ is a graded functor, $\I F$ is a graded subfunctor
of $F$, where
\[\I F(M)_n=\sum_{\substack{f\in\I(M,X)_i\\ X\in\C\\ i+j=n}}
F(f)(F(X)_j)\] for all objects $M$ in $\C$.  Easy properties of the
product of an ideal and a functor are the following.
\begin{lem}\label{lem:idealepi}
Let $\I$ be an ideal and $\eta\colon F\to G$ a morphism of two
functors $F$ and $G$ in $\Mod(\C)$.
\begin{enumerate}
\item[(a)] $\eta(\I F)\subseteq \I G$.
\item[(b)] If $\eta\colon F\to G$ is an epimorphism, then
  $\eta|_{\I F}\colon \I F\to \I G$ is an epimorphism.
\end{enumerate}
\end{lem}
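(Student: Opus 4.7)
The plan is to unwind the definitions and chase elements, using naturality of $\eta$ throughout. Both parts rest on the fact that, for a contravariant functor $F\colon\C^\op\to\Mod K$ and $f\in\I(M,X)$, the map $F(f)$ goes $F(X)\to F(M)$, so elements of $\I F(M)$ have the shape $\sum_i F(f_i)(y_i)$ with $f_i\in\I(M,X_i)$ and $y_i\in F(X_i)$.

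For part (a), I would pick $M\in\C$ and an element $x\in\I F(M)$, write it as a finite sum $x=\sum_i F(f_i)(y_i)$ with $f_i\in\I(M,X_i)$, $y_i\in F(X_i)$, and then apply $\eta_M$. By naturality of $\eta$,
\[\eta_M(F(f_i)(y_i))=G(f_i)(\eta_{X_i}(y_i)),\]
and this lies in $\Im G(f_i)\subseteq \I G(M)$ because $f_i\in\I(M,X_i)$. Summing, $\eta_M(x)\in\I G(M)$, which proves the inclusion $\eta(\I F)\subseteq \I G$ and, in particular, lets us restrict $\eta$ to a morphism $\eta|_{\I F}\colon\I F\to \I G$.

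For part (b), I first use that epimorphisms in $\Mod(\C)$ are computed pointwise, so $\eta_X\colon F(X)\to G(X)$ is surjective for every $X\in\C$. Given $z\in\I G(M)$, write $z=\sum_i G(f_i)(w_i)$ with $f_i\in\I(M,X_i)$ and $w_i\in G(X_i)$, and lift each $w_i$ to some $v_i\in F(X_i)$ with $\eta_{X_i}(v_i)=w_i$. Then $x:=\sum_i F(f_i)(v_i)\in\I F(M)$, and naturality gives
\[\eta_M(x)=\sum_i G(f_i)(\eta_{X_i}(v_i))=\sum_i G(f_i)(w_i)=z.\]
Hence $\eta|_{\I F}$ is pointwise surjective, so an epimorphism in $\Mod(\C)$.

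There is no real obstacle; the only thing to be careful about is the bookkeeping of variance (the ideal lives on morphisms of $\C$, but $F(f)$ is a map in the opposite direction of $f$) and the appeal to the pointwise description of epimorphisms in $\Mod(\C)$. The graded analogue, if one wanted it, would go through verbatim by taking $f_i$ and $y_i$ homogeneous and tracking degrees, but that is not required by the statement.
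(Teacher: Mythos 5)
Your proof is correct. The paper states this lemma without proof (introducing it only as an ``easy property''), and your element-chase --- writing an element of $\I F(M)$ as a finite sum $\sum_i F(f_i)(y_i)$ with $f_i\in\I(M,X_i)$, applying naturality of $\eta$, and using that epimorphisms in $\Mod(\C)$ are pointwise surjections --- is exactly the intended argument, with the variance bookkeeping handled properly.
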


Given an ideal in an additive $K$-category $\C$ there is a naturally
associated graded category. We saw an example of this already in
Example \ref{exam:assgraded}. Let $\C$ be an additive $K$-category
with an ideal $\I$. Denote by $\A_\gr(\C)$ the associated graded
category with respect to $\I$ having the same objects as $\C$ while
the morphisms are given by
\[\Hom_{\A_\gr(\C)}(A,B)=\amalg_{j\geq 0} \I^j(A,B)/\I^{j+1}(A,B)\]
for all objects $A$ and $B$ in $\A_\gr(\C)$.

For each object and each morphism in $\Mod(\C)$ there are naturally
associated an object and a morphism in $\Gr(\A_\gr(\C))$. Denote this
function on objects and morphisms by $G$, and $G\colon \Mod(\C)\to
\Gr(\A_\gr(\C))$ is given by letting
\[G(F)=\amalg_{j\geq 0}\I^j F/\I^{j+1} F\]
for all $F$ in $\Mod(\C)$, where $G(F)_j=\I^jF/\I^{j+1}F$. On a morphism
$\eta\colon F\to H$ in $\Mod(\C)$ let
\[G(\eta)=(\overline{\eta}|_{\I^j F/\I^{j+1} F})_{j\geq 0} \colon
G(F)\to G(H).\]
An ideal $\I$ gives a filtration of any object $F$ in $\Mod(\C)$ via
$\{\I^jF\}_{j\geq 0}$. Any natural transformation $\eta\colon F\to F'$
for $F$ and $F'$ in $\Mod(\C)$ we have that $\eta(\I^jF)\subseteq
\I^jF'$ for all $j\geq 0$ by  Lemma \ref{lem:idealepi}. So any object
in $\Mod(\C)$ has a filtration and any morphism in $\Mod(\C)$ has
degree $0$, referring to \cite[I.2]{NvO}. Then by \cite[I.4]{NvO} the
function $G$ is a functor $G\colon \Mod(\C)\to \Gr(\A_\gr(\C))$. 

The notion of an ideal in a graded category leads to the following
natural definition of a graded category generated in degrees $0$ and
$1$.
\begin{defin}
Let $\C$ be a positively graded $K$-category. The graded category $\C$
is said to be \emph{generated in degrees $0$ and $1$} if the ideal
\[J=\amalg_{i\geq 1}\Hom_\C(-,-)_i\subseteq \Hom_\C(-,-)\]
satisfies
\[J^r=\amalg_{i\geq r}\Hom_\C(-,-)_i\]
for all $r\geq 1$.
\end{defin}

\subsection{Tensor product of functors}  Tensor products of functors
were first considered by Mitchell in \cite{M} and then later by
Auslander and Bautista et.\ al.\ in \cite{A,BCS}. Here we review the
construction of tensor products of functors from \cite{A}.

Let $\C$ be an additive $K$-category. For two functors $F$ in
$\Mod(\C)$ and $G$ in $\Mod(\C^\op)$ we want to define the tensor
product $G\otimes_\C F$ of $G$ and $F$. There is a unique (up to
isomorphism) functor $-\otimes_\C-\colon \Mod(\C^\op)\times\Mod(\C)
\to \Mod K$ satisfying the following properties:
\begin{enumerate}
\item[(i)] The tensor product is a right exact functor in each
 variable.
\item[(ii)] The tensor product commutes with direct sums in both
variables.
\item[(iii)] For each object $C$ in $\C$ we have
 $\Hom_\C(C,-)\otimes_\C F = F(C)$ and
 $G\otimes_\C\Hom_\C(-,C)=G(C)$ for any $F$ in $\Mod(\C)$ and $G$ in
 $\Mod(\C^\op)$.
\end{enumerate}
A morphism $\amalg_{j}\Hom_\C(Y_j,-)\to \amalg_i\Hom_\C(X_i,-)$ is
given by morphisms $f_{ij}\colon X_i\to Y_j$ such that $(f_{ij})$ is in
$\prod_j\amalg_i\Hom_\C(X_i,Y_j)$, and
\[(\Hom_\C(f_{ij},-))\otimes 1_F\colon
\amalg_j\Hom_\C(Y_j,-)\otimes_\C F\to \amalg_i\Hom_\C(X_i,-)\otimes_\C
F\]
is by definition given by $(F(f_{ij}))\colon \amalg_j F(Y_j)\to
\amalg_i F(X_i)$. 

Suppose that $G$ is in $\Mod\C$, and let
\[\amalg_j\Hom_\C(Y_j,-)\extto{(f_{ij},-)} \amalg_i\Hom_\C(X_i,-)\to
G\to 0\]
be a projective presentation of $G$ (See Section \ref{section1.5} for
a further discussion on projective functors). Then $G\otimes_\C F$ is
by definition given by the commutative diagram
\[\xymatrix{%
\amalg_j\Hom_\C(Y_j,-)\otimes_\C F\ar[r] \ar@{=}[d] &
\amalg_i\Hom_\C(X_i,-)\otimes_\C F\ar[r] \ar@{=}[d] &
G\otimes_\C F\ar[r]\ar@{=}[d] & 0\\
\amalg_j F(Y_j)\ar[r]^{(F(f_{ij}))} & \amalg_i F(X_i)\ar[r] &
G\otimes_\C F\ar[r] & 0}\]
It is straightforward to see that the definition of $G\otimes_\C F$ is
independent of the chosen projective presentation of $G$. Furthermore,
it follows that the tensor product $-\otimes_\C-$ is a right exact
functor in both variables. An equivalent definition is to use a
projective presentation of $F$ to define $G\otimes_\C F$. We freely
identify these.

Let $B\colon \C^\op\times\C\to \Mod K$ be an additive bifunctor, and let
$F$ be in $\Mod(\C)$. Then define $B\otimes_\C F\colon \C^\op\to
\Mod K$ by
\[(B\otimes_\C F)(X) = B(X,-)\otimes_\C F\]
and for $f\colon X\to Y$ in $\C$
\[(B\otimes_\C F)(f)\colon B(Y,-)\otimes_\C F\extto{B(f,-)\otimes_\C
1_F} B(X,-)\otimes_\C F.\]
Applying this to an ideal in an additive $K$-category $\C$ we obtain the
following.
\begin{lem}\label{lem:tensormoduloideal}
Let $\I$ be an ideal in an additive $K$-category $\C$. Then
\[\Hom_\C(-,-)/\I\otimes_\C F\simeq F/\I F\]
for all $F$ in $\Mod(\C)$.
\end{lem}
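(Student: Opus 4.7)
The plan is to fix $X$ in $\C$ and apply $-\otimes_\C F$ to the short exact sequence
\[0 \to \I(X,-) \to \Hom_\C(X,-) \to (\Hom_\C(-,-)/\I)(X,-) \to 0\]
of objects in $\Mod(\C^\op)$. Right exactness of the tensor product (property (i)) and the Yoneda-type identification $\Hom_\C(X,-) \otimes_\C F \cong F(X)$ (property (iii)) produce an exact sequence
\[\I(X,-) \otimes_\C F \xrightarrow{\phi_X} F(X) \longrightarrow (\Hom_\C(-,-)/\I \otimes_\C F)(X) \longrightarrow 0.\]
So everything reduces to identifying $\Im \phi_X$ with $\I F(X)$, naturally in $X$.

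For that identification I would pick an epimorphism $\amalg_i \Hom_\C(Z_i,-) \twoheadrightarrow \I(X,-)$ in $\Mod(\C^\op)$ (representables are projective generators). By Yoneda this is encoded by a family $g_i \in \I(X, Z_i)$, and surjectivity as functors says every $f \in \I(X,Y)$ for every $Y$ admits a decomposition $f = \sum_k h_k g_{i_k}$ with $h_k \colon Z_{i_k} \to Y$. Applying $-\otimes_\C F$ gives an epimorphism $\amalg_i F(Z_i) \twoheadrightarrow \I(X,-) \otimes_\C F$, and composition with $\phi_X$ is the map $(F(g_i))_i \colon \amalg_i F(Z_i) \to F(X)$. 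Thus $\Im \phi_X = \sum_i F(g_i)(F(Z_i))$. Using the contravariant identity $F(f) = F(g_{i_k}) F(h_k)$ coming from the decomposition above, I would then argue both inclusions to equate this sum with $\sum_{Y \in \C,\ f \in \I(X,Y)} F(f)(F(Y)) = \I F(X)$.

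Naturality in $X$ is then formal: the inclusion $\I \hookrightarrow \Hom_\C(-,-)$ is a morphism of bifunctors, the Yoneda isomorphism is natural in $X$, and by the definition given in the excerpt a morphism $h \colon X \to X'$ acts on the middle term as $F(h)$ under the Yoneda identification, which matches the action on $F/\I F$.

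The step I expect to be most delicate is the identification of $\Im \phi_X$: the generators $g_i$ live only in $\I(X, Z_i)$ for specific $Z_i$, yet the image must match a sum ranging over all $Y$ in $\C$ and all $f \in \I(X,Y)$. The key is that the surjectivity $\amalg_i \Hom_\C(Z_i,-) \twoheadrightarrow \I(X,-)$ is a surjectivity of functors (not merely pointwise at a single $Y$), and together with contravariance of $F$ this is exactly what compresses the ostensibly larger sum defining $\I F(X)$ down to the set of generators.
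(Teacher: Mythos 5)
Your proof is correct, and its skeleton --- tensoring the short exact sequence $0\to\I\to\Hom_\C(-,-)\to\Hom_\C(-,-)/\I\to 0$ with $F$, reducing everything to the identification $\Im\phi_X=\I F(X)$, and then observing naturality in $X$ --- is exactly the paper's. Where you genuinely diverge is in how that image is computed. The paper resolves the \emph{other} variable: it fixes a projective presentation $\amalg_i\Hom_\C(-,C_i)\xrightarrow{\varphi}F\to 0$ with $x_i=\varphi(1_{C_i})\in F(C_i)$, computes $\I(X,-)\otimes_\C F$ from this presentation of $F$ (invoking the remark that the tensor product may be computed from a presentation in either variable), reads off $\Im\theta_X=\{\sum_iF(f_i)(x_i)\mid (f_i)_i\in\amalg_i\I(X,C_i)\}\subseteq\I F(X)$, and for the reverse inclusion lifts the Yoneda morphism $\psi_z\colon\Hom_\C(-,Y)\to F$ attached to an element $z$ with $y=F(g)(z)$ through that presentation. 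You instead resolve $\I(X,-)$ by covariant representables --- the variable through which the tensor product is actually defined --- so the composite $\amalg_iF(Z_i)\to F(X)$ is immediately $(F(g_i))_i$ and $\Im\phi_X=\sum_i\Im F(g_i)$; both inclusions then fall out of the single fact that $\amalg_i\Hom_\C(Z_i,-)\to\I(X,-)$ is an epimorphism of functors (not merely pointwise at one object), combined with contravariance of $F$. Your route is a little more economical, and if one takes the tautological epimorphism indexed by all pairs $(Y,f)$ with $f\in\I(X,Y)$ the identification $\Im\phi_X=\I F(X)$ becomes a definition-chase; the paper's route has the mild advantage that one presentation of $F$ serves simultaneously for every $X$, which it then reuses when verifying that $\Im\theta$ and $\I F$ agree as subfunctors and not just objectwise. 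One notational slip: from $f=\sum_kh_kg_{i_k}$ contravariance gives $F(f)=\sum_kF(g_{i_k})F(h_k)$ rather than $F(f)=F(g_{i_k})F(h_k)$, but the containment $\Im F(f)\subseteq\sum_k\Im F(g_{i_k})$ that you actually use is unaffected.
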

\begin{proof}
The exact sequence
\[0\to \I\to \Hom_\C(-,-)\to \Hom_\C(-,-)/\I\to 0\]
gives rise to the exact sequence
\[\I\otimes_\C F \to \Hom_\C(-,-)\otimes_\C F\to
\Hom_\C(-,-)/\I\otimes_\C F\to 0.\]
For an object $X$ in $\C$ we obtain
\[\xymatrix@C=15pt{%
& \I(X,-)\otimes_\C F\ar[r]^-{\theta_X} \ar[d] &
\Hom_\C(X,-)\otimes_\C F\ar[r]\ar@{=}[d] &
\Hom_\C(X,-)/\I(X,-)\otimes_\C F\ar[r]\ar[d]^-\wr & 0\\
0\ar[r] & \Im \theta_X\ar[r] & F(X)\ar[r] & F(X)/\Im
\theta_X \ar[r] & 0}\]
We want to show that $\Im \theta_X=(\I F)(X)$ for all $X$ in $\C$.

Let $\amalg_j\Hom_\C(-,B_j)\to \amalg_i\Hom_\C(-,C_i)\extto{\varphi}
F\to 0$ be a projective presentation of $F$. Recall that by Yoneda's
Lemma $\varphi$ is given as follows. Let $x_i=\varphi(1_{C_i})$ in
$F(C_i)$. For $(f_i)_i$ in $\amalg_i(X,C_i)$ we have
$\varphi((f_i)_i)=\sum_i F(f_i)(x_i)$.

Consider the following commutative diagram
\[\xymatrix@C=10pt{
& \amalg_i\Hom_\C(X,C_i)\ar@{=}[r] \ar[d]^{\varphi_X} & 
\Hom_\C(X,-)\otimes_\C \amalg_i\Hom_\C(-,C_i)\ar[d]^{\pi_X}\\
& F(X)\ar@{=}[r]|!{[d];[ur]}\hole  &  
        \Hom_\C(X,-)\otimes_\C F\\ 
\amalg_i\I(X,C_i) \ar@{=}[r]\ar[d]\ar[uur]^(0.3){\nu_X} & 
\I(X,-)\otimes_\C\amalg_i\Hom_\C(-,C_i)\ar[d]\ar[uur]^(0.3){\theta_X} & \\
\I(X,-)\otimes_\C F\ar@{=}[r]\ar[uur]|!{[u];[ur]}{\hole\hole}^(0.7){\nu_X} & 
        \I(X,-)\otimes_\C F\ar[uur] & 
}\]
It is clear from this diagram that
$\Im\theta_X=\Im\pi_X\nu_X=\Im\varphi_X\nu_X$. Given $(f_i)_i$ in
$\amalg_i\I(X,C_i)$ we have that
$\varphi_X\nu_X((f_i)_i)=\varphi_X((f_i)_i)=\sum_i F(f_i)(x_i)$ which
is in $(\I F)(X) = \sum_{\substack{h\in\I(X,Y)\\ Y\in\C}} F(h)$. Hence
$\Im\theta_X\subseteq (\I F)(X)$.

Conversely, let $y$ be in $(\I F)(X)$. By definition there exists some
$g$ in $\I(X,Y)$ and $z$ in $F(Y)$ such that $y=F(g)(z)$. By Yoneda's
Lemma $z$ corresponds to a map $\psi_z\colon (-,Y)\to F$. Then the
diagram
\[\xymatrix{%
& \Hom_\C(-,Y)\ar[d]^{\psi_z}\ar@{-->}[dl]_{\exists (-,f_i)_i} \\
\amalg_i{\Hom_\C(-,C_i)}\ar[r]^-\varphi & F}\]
gives a map $(f_i)_i$ in $\amalg_i{_\C(Y,C_i)}$ such that
$\psi_z(1_Y)=z=\sum_iF(f_i)(x_i)$. Applying $F(g)$ to this equality we
have that
\[y=F(g)(z)=F(g)\sum_iF(f_i)(x_i)=\sum_iF(f_ig)(x_i),\]
which is in $\Im\theta_X$. Hence $\Im\theta_X=(\I F)(X)$. It follows
that
\[((-,-)/\I(-,-)\otimes_\C F)(X)\simeq F(X)/\Im\theta_X = F(X)/\I
F(X)= (F/\I F)(X).\]
Given a map $h\colon Y\to X$ we have a commutative diagram
\[\xymatrix{%
\I(X,-)\otimes_\C F\ar[r]^-{\theta_X}\ar[d]^{(h,-)|_{\I(X,-)}\otimes 1_F} &
\Hom_\C(X,-)\otimes_\C F \ar@{=}[r]\ar[d]^{(h,-)\otimes 1_F} &
F(X)\ar[d]^{F(h)} \\
\I(Y,-)\otimes_\C F\ar[r]^-{\theta_Y} & \Hom_\C(Y,-)\otimes_\C F \ar@{=}[r]
& F(Y) }\]
Hence $\Im F(h)\theta_X\subseteq \Im\theta_Y$. Let $z$ be in $(\I
F)(X)$, then $z=F(g)(w)$ for some $g$ in $\I(X,W)$ and $w$ in
$F(W)$. For all $h$ in $\Hom_\C(Y,X)$ the composition $gh$ is in
$\I(Y,W)$. Furthermore $F(gh)(w)=F(h)F(g)(w)=F(h)(z)$, and it is in
$(\I F)(Y)$. Therefore $\Im\theta=\I F$ as functors. This completes
the proof.
\end{proof}

Let $\C$ be an additive graded $K$-category. Here we extend the
definition of tensor products of functors to tensor products of graded
functors and obtain similar results.

For two graded functors $F$ in $\Gr(\C)$ and $G$ in $\Gr(\C^\op)$,
then choosing a graded projective presentation
\[\amalg_j\Hom_\C(Y_j,-)[n_j]\extto{(f_{ij})}
\amalg_i\Hom_\C(X_i,-)[m_i]\to G\to 0\]
of $G$ one similarly defines $G\otimes_\C F$ as
\[\amalg_j F(Y_j)[n_j]\extto{(F(f_{ij}))}\amalg_i F(X_i)[m_i]\to
G\otimes_\C F\to 0.\]
Since $\amalg_j F(Y_j)[n_j]\extto{(F(f_{ij}))}\amalg_i F(X_i)[m_i]$ is
a degree zero map of graded vector spaces, $G\otimes_\C F$ naturally
becomes a graded vector space. Again this is
independent of the chosen projective presentation of $G$ (or the
one of $F$). 

\subsection{Nakayama's Lemma} As for ring theory in general
Nakayama's Lemma is a central result. Here we give a version for
graded functors. To this end we need the following definition.

Let $\C$ be a graded $K$-category. Then a graded functor $F$ in
$\Gr(\C)$ is said to be \emph{bounded below} if $F(C)_i=(0)$ for all
objects $C$ in $\C$ and $i<N$ for some integer $N$.

\begin{lem}[Nakayama's Lemma]
  Let $\C$ be an additive graded $K$-category with radical $\rad_\C
  =\amalg_{i\geq 1} \Hom_\C(-,-)_i$.  Suppose that $F$ in $\Gr(\C)$ is
  a bounded below graded functor. Assume that $F/\rad_\C F=(0)$, then
  $F=0$.
\end{lem}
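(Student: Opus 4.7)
The plan is to run the standard graded Nakayama argument, using the fact that in a positively graded category the radical is concentrated in strictly positive degrees, so passing through $\rad_\C$ lowers the degree by at least one.

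First I would argue by contradiction: assume $F \neq 0$. Since $F$ is bounded below, the set
\[S = \{ j \in \Z \mid F(C)_j \neq 0 \text{ for some object } C \text{ in } \C \}\]
is a nonempty subset of $\Z$ bounded below, hence has a minimum $N$. Choose an object $C_0$ and a nonzero element $x \in F(C_0)_N$.

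Next I would use the hypothesis. Since $F/\rad_\C F = (0)$, we have $F = \rad_\C F$ as subfunctors, and in particular $x \in (\rad_\C F)(C_0)_N$. By the explicit formula for the graded part of the product of an ideal with a functor,
\[(\rad_\C F)(C_0)_N = \sum_{\substack{f\in\rad_\C(C_0,X)_i \\ X\in\C,\ i+j=N}} F(f)(F(X)_j),\]
so we may write $x = \sum_{k=1}^{n} F(f_k)(y_k)$ with $f_k \in \rad_\C(C_0, X_k)_{i_k}$ and $y_k \in F(X_k)_{j_k}$, where $i_k + j_k = N$ for each $k$.

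Finally I would exploit the hypothesis $\rad_\C = \amalg_{i \geq 1} \Hom_\C(-,-)_i$: each $f_k$ lies in degree $i_k \geq 1$, so $j_k = N - i_k \leq N - 1 < N$. By the minimality of $N$ in $S$, this forces $F(X_k)_{j_k} = 0$, hence $y_k = 0$ for all $k$, and consequently $x = 0$, a contradiction. Therefore $F = 0$. There is no real obstacle here; the only point that requires care is reading off the correct formula for $(\rad_\C F)(C_0)_N$ from the definition in the ideals subsection, and noting that the hypothesis on $\rad_\C$ is precisely what guarantees that the degree of $f_k$ is at least one.
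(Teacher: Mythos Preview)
Your proof is correct and follows essentially the same approach as the paper: both argue that if $F$ is bounded below with minimal nonzero degree $N$, then since every morphism in $\rad_\C$ has degree at least $1$, the subfunctor $\rad_\C F$ is supported only in degrees $>N$, so $F(C_0)_N$ survives in $F/\rad_\C F$. Your version is phrased as a contradiction and is a bit more explicit about writing $x$ as a finite sum, but the argument is the same.
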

\begin{proof}
Assume that $F$ is bounded below such that $F(C)_i=(0)$ for all $i<N$
and all $C$ in $\C$ and that $F(C)_N\neq (0)$ for some $C$ in
$\C$. By definition
\[(\rad_\C F)(C)=\sum_{\substack{g\in\rad_\C(C,X)\\ X\in \C}} \Im
F(g)\subseteq F(C).\]
Since $\rad_\C(C,X)=\amalg_{i\geq 1}\Hom_\C(C,X)_i$, then for any $g$ in
$\rad_\C(C,X)$ the morphism $F(g)\colon F(X)\to F(C)$ might be of mixed
degree but always of degree greater or equal to $1$. Since
$F(X)_i=(0)$ for all $i<N$, we have $\Im F(g)\subseteq \amalg_{i>N}
F(C)_i$. It follows that $F/\rad_\C F\neq 0$.
\end{proof}
The assumption on the radical of the category $\C$ above seems
strong. However, the applications we have in mind are coming from
Example \ref{exam:assgraded} and Example \ref{exam:extcat}. In
addition, the situation we want to generalize is that of a positively
graded algebra $\L=\sum_{i\geq 0}\L_i$ with $\L_0$ semisimple.

\section{Homological algebra}\label{section1.5}

The main aim in this section is to prove some elementary homological
properties we use later for the graded categories discussed in the
previous section. When $\C$ is a positively graded Krull-Schmidt
category, we characterize the projective and the simple objects in
$\Gr(\C)$, discuss a duality for subcategories of $\Gr(\C)$ and show
that the global dimension of $\Gr(\C)$ is given by the supremum of the
projective dimension of the simple objects.

\subsection{Projective functors and covers}
Using the Yoneda's Lemma it is well-known that the functors
$\Hom_\C(-,C)$ are projective in $\Mod(\C)$ and $\Gr(\C)$ for an
additive graded $K$-category $\C$. In addition any projective functor
is a direct summand of $\amalg_{i\in \Sigma} \Hom_\C(-,C_i)[m_i]$ for
some integers $m_i$ and index set $\Sigma$. By further assuming that
the category $\C$ is Krull-Schmidt, we show next that all projective
functors in $\Gr(\C)$ are exactly given like this for indecomposable
objects $C_i$.

\begin{lem}
Let $\C$ be a graded Krull-Schmidt $K$-category.  The projective
functors in $\Gr(\C)$ are all of the form
\[\amalg_{j\in J}\Hom_\C(-,C_j)[m_j]\] with $C_j$ indecomposable in
$\C$ and $m_j$ an integer for some index set $J$.
\end{lem}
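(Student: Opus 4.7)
\medskip

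\noindent\textbf{Proof proposal.} The plan is to combine the graded Yoneda Lemma with the Krull--Schmidt--Azumaya theorem for direct summands of coproducts of objects with local endomorphism rings.

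\smallskip

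\emph{Step 1: Indecomposability of representables.} By Lemma~\ref{lem:Yoneda}, for any $C$ in $\C$ there is a graded isomorphism
\[
\End_{\Gr(\C)}\bigl(\Hom_\C(-,C)\bigr)\simeq \End_\C(C),
\]
and this isomorphism restricts to a ring isomorphism in degree zero. If $C$ is indecomposable in $\C$, then by the standing assumption $\End_\C(C)_0$ is a (graded) local ring, so $\Hom_\C(-,C)$ is an indecomposable object in $\Gr(\C)$ whose degree zero endomorphism ring is local. The same holds for each shift $\Hom_\C(-,C)[m]$, as shifting is an auto-equivalence of $\Gr(\C)$ that commutes with representables.

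\smallskip

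\emph{Step 2: Projectives are summands of coproducts of shifted representables.} Let $P$ be projective in $\Gr(\C)$. For each $C$ in $\C$ and each integer $m$, every homogeneous element $x\in P(C)_m$ corresponds via the graded Yoneda Lemma to a degree zero morphism $\Hom_\C(-,C)[m]\to P$. Summing over a homogeneous generating set produces an epimorphism
\[
\coprod_{i\in\Sigma}\Hom_\C(-,C_i)[m_i]\twoheadrightarrow P,
\]
which splits since $P$ is projective. Using that $\C$ is Krull--Schmidt, each $C_i$ decomposes as a finite direct sum of indecomposables, and $\Hom_\C(-,-)$ sends finite direct sums to finite direct sums, so we may refine the presentation so that every $C_i$ is indecomposable. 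Hence $P$ is a direct summand of $\coprod_{i\in\Sigma}\Hom_\C(-,C_i)[m_i]$ with each $C_i$ indecomposable.

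\smallskip

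\emph{Step 3: Krull--Schmidt--Azumaya for arbitrary coproducts.} By Step~1 each summand $\Hom_\C(-,C_i)[m_i]$ has a local endomorphism ring in $\Gr(\C)_0$, and $\Gr(\C)$ admits arbitrary coproducts. The classical Krull--Schmidt--Azumaya theorem (in the form valid for direct summands of arbitrary coproducts of objects with local endomorphism rings, cf.\ \cite[Theorem~27.6]{AF} and its infinite version) then guarantees that any direct summand of $\coprod_{i\in\Sigma}\Hom_\C(-,C_i)[m_i]$ is itself a coproduct of objects isomorphic to the $\Hom_\C(-,C_i)[m_i]$. Applying this to $P$ gives the desired description.

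\smallskip

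\emph{Expected obstacle.} The routine parts are Steps~1 and~2; the real content is Step~3, since invoking Krull--Schmidt for direct summands of possibly infinite coproducts requires verifying that the endomorphism rings of the building blocks are genuinely local and that coproducts behave well in $\Gr(\C)$. Once these hypotheses are checked, the quoted Azumaya-type theorem delivers the conclusion.
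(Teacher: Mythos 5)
Your Steps 1 and 2 are correct and coincide with the opening of the paper's proof: via the graded Yoneda Lemma one writes the projective $P$ (the paper's $F$) as a direct summand of a coproduct $\amalg_{j\in I}\Hom_\C(-,C_j)[m_j]$ with each $C_j$ indecomposable, and the Krull--Schmidt hypothesis makes $\End_\C(C_j)_0$ local. The problem is Step 3, which you yourself flag as ``the real content'': the theorem you invoke there is not available in the generality you state, and the citation is wrong. Theorem 27.6 of Anderson--Fuller characterizes semiperfect rings via finite decompositions of finitely generated projectives; it says nothing about direct summands of infinite coproducts. The genuinely classical Azumaya theorem gives uniqueness of LE-decompositions and produces \emph{some} indecomposable summand inside any nonzero summand, but the stronger assertion that \emph{every} direct summand of an arbitrary coproduct of objects with local endomorphism rings is again such a coproduct is essentially the statement of the lemma itself, not an off-the-shelf black box: for arbitrary index sets and arbitrary building blocks this is a notoriously delicate question, and the positive results (Crawley--J\'onsson--Warfield) require the summands to be countably generated. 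As written, Step 3 assumes what is to be proved.

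The gap is repairable in two ways. One could observe that each $\Hom_\C(-,C_j)[m_j]$ is cyclic (generated by $1_{C_j}$ via Yoneda), hence countably generated, so the Crawley--J\'onsson--Warfield theorem does apply in the functor category --- but you never make this observation, and it is precisely what would carry the argument. The paper instead gives a self-contained argument modeled on Theorem 26.5 of Anderson--Fuller: from the splitting $\amalg_{j\in I}\Hom_\C(-,C_j)[m_j]\simeq F\amalg G$ it shows, using that $\End_\C(C_j)_0$ is local and that $1_{F\amalg G}=i_Fp_F+i_Gp_G$, that each $\Hom_\C(-,C_j)[m_j]$ is a direct summand of $F$ or of $G$; a Zorn's Lemma argument then produces a maximal pair of disjoint subfamilies embedding into $F$ and $G$ respectively, and maximality forces these to exhaust the index set. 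Either route closes the argument, but your write-up supplies neither.
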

\begin{proof}
Let $F$ be a projective functor in $\Gr(\C)$. Then there exists an
exact sequence $\amalg_{j\in I}\Hom_\C(-,C_j)[m_j]\extto{\pi} F$ for
some degree zero morphism $\pi$ and for some indecomposable objects
$C_j$ in $\C$, integers $m_j$ and some index set $J$. Since $F$ is
projective in $\Gr(\C)$, there is a splitting of the morphism $\pi$
and this can be chosen also as a degree zero morphism. Standard
arguments then shows there is a degree $0$ isomorphism
\[\varphi\colon \amalg_{j\in I}\Hom_\C(-,C_j)[m_j] \to F\amalg G\]
for some functor $G$ in $\Gr(\C)$ (choose $G=\Ker \pi$).

Consider the composition of the maps
\begin{multline}
\Hom_\C(-,C_j)[m_j]\extto{\lambda_j} \amalg_{j\in
  I}\Hom_\C(-,C_j)[m_j] \extto{p_F\varphi} F\extto{\varphi^{-1} i_F}\\
\amalg_{j\in I}\Hom_\C(-,C_j)[m_j] \extto{p_j}
  \Hom_\C(-,C_j)[m_j],\notag
\end{multline}
where $p_F$ and $i_F$ are the natural projection and inclusion of $F$
in $F\amalg G$, respectively. Then
$p_j\varphi^{-1}i_Fp_F\varphi\lambda_j\colon \Hom_\C(-,C_j)[m_j]\to
\Hom_\C(-,C_j)[m_j]$ is a natural transformation of degree $0$ induced
by a map $g\colon C_j\to C_j$ in $\Hom_\C(C_j,C_j)_0$ by the Yoneda's 
Lemma. Since 
$\Hom_\C(C_j,C_j)_0$ is a local ring and $1_{F\amalg G}=
i_Fp_F+i_Gp_G$, either $g$ or $1-g$ is an isomorphism.

If $g$ is an isomorphism, then $\Hom_\C(-,C_j)[m_j]$ is a direct
summand of $F$. And if $1-g$ is an isomorphism, then
$\Hom_\C(-,C_j)[m_j]$ is a direct summand of $G$.  Hence, either
$\Hom_\C(-,C_j)[m_j]$ is a direct summand of $F$ or a direct summand
of $G$.

Now one can proceed as in the proof of \cite[Theorem
  26.5]{AF}. Consider pairs $(J,L)$ of subsets of $I$ with $J\cap
L=\emptyset$ and such that $\amalg_{j\in J}\Hom_\C(-,C_j)[n_j]$ is a
subfunctor of $F$ and $\amalg_{l\in L}\Hom_\C(-,C_l)[n_l]$ is
subfunctor of $G$. These pairs $(J,L)$ is naturally ordered by
inclusion, and any chain has a upper bound given by the union. So, by
Zorn's Lemma we can choose a maximal pair $(J,L)$. Then we have the
following commutative diagram
\[\xymatrix@C=15pt{
0\ar[d] & 0\ar[d] \\
\amalg_{j\in J\cup L}\Hom_\C(-,C_j)[n_j] \ar@{=}[r]\ar[d] &
(\amalg_{j\in J}\Hom_\C(-,C_j)[n_j])\amalg
(\amalg_{l\in L}\Hom_\C(-,C_l)[n_l])\ar[d] \\
\amalg_{i\in I}\Hom_\C(-,C_i)[n_i] \ar[r]^-\sim\ar[d] &
F\amalg G\ar[d] \\
\amalg_{i\in I\setminus (J\cup L)}\Hom_\C(-,C_i)[n_i] \ar[r]^-\sim\ar[d] &
F'\amalg G'\ar[d]\\
0 & 0
}\]
where $F'=F/(\amalg_{j\in J}\Hom_\C(-,C_j)[n_j])$ and
$G'=G/(\amalg_{l\in L}\Hom_\C(-,C_l)[n_l])$. Hence, any
$\Hom_\C(-,C_{j_0})[n_{j_0}]$ with $j_0$ in $I\setminus (J\cup L)$ is
either a direct summand of $F'$ or a direct summand of $G'$. It
follows that either
\[(\amalg_{j\in J}\Hom_\C(-,C_j)[n_j])\amalg
\Hom_\C(-,C_{j_0})[n_{j_0}]\]
is a subfunctor of $F$ or
\[(\amalg_{l\in L}\Hom_\C(-,C_l)[n_l])\amalg
\Hom_\C(-,C_{j_0})[n_{j_0}]\]
is a subfunctor of $G$. This contradicts the choice of the maximal pair
$(J,L)$. The claim follows from this.
\end{proof}

Next we discuss projective covers in the category of functors we are
considering. Recall that an essential epimorphism $P\to F$ in
$\Mod(\C)$ is a \emph{projective cover} of $F$ if $P$ is a projective
$\C$-module.  For the category of modules over a ring, all simple
modules have a projective cover if and only if the ring is
semiperfect. It was shown by Auslander in \cite{A} that the same
condition comes up in having minimal projective presentations of
finitely presented functors as we recall next. Denote by $\mod\C$ the
full subcategory of $\Mod(\C)$ consisting of all finitely presented
functors.

\begin{lem}[\protect{\cite[Corollary 4.13]{A}}]\label{lem:minpresent}
Let $\C$ be a positively graded $K$-category, where (all idempotents
split and) $\End_\C(C)_0$ is semiperfect for all objects $C$ in
$\C$. Then every object in $\mod(\C)$ has a minimal projective
presentation. In particular, any object in $\mod(\C)$ has a projective
cover.
\end{lem}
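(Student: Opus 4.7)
The plan is to reduce the first claim to the second (existence of projective covers), since once we know every finitely presented functor has a projective cover, we can iterate on kernels to build a minimal projective presentation. So the real work is producing a projective cover of a given $F$ in $\mod(\C)$.

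First I would take $F$ in $\mod(\C)$ and pick any finite projective presentation $P_1 \xrightarrow{d_1} P_0 \xrightarrow{d_0} F \to 0$ in $\Gr(\C)_0$. Applying the preceding lemma (characterizing projectives in $\Gr(\C)$) together with the idempotent-splitting assumption, I may write $P_0 = \amalg_{j=1}^n \Hom_\C(-,C_j)[m_j]$ with each $C_j$ indecomposable; by the Krull–Schmidt hypothesis recalled in the introduction, $\End_\C(C_j)_0$ is local for every $j$.

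The main step is to identify $R := \End_{\Gr(\C)_0}(P_0)$ as a semiperfect ring. The finite-direct-sum decomposition of $P_0$ gives an orthogonal decomposition $1_R = e_1 + \cdots + e_n$ of the identity, with each corner $e_j R e_j \simeq \End_\C(C_j)_0$ local; this is precisely the criterion of \cite[Theorem 27.6]{AF} for $R$ to be semiperfect. Once that is in hand, I would apply the standard semiperfect-ring argument to the epimorphism $d_0 \in \Hom_{\Gr(\C)_0}(P_0,F)$: one can lift idempotents modulo the Jacobson radical of $R$ to find an idempotent $e \in R$ such that $eP_0$ is a direct summand of $P_0$ and $d_0|_{eP_0}\colon eP_0 \to F$ is essential, i.e.\ a projective cover of $F$.

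To finish, I would iterate: the kernel $K$ of the cover $eP_0 \to F$ is itself finitely generated, since the image of $d_1$ together with the complementary summand $(1-e)P_0$ maps onto it, giving a surjection from a finite direct sum of shifted representables onto $K$; so $K$ lies in $\mod(\C)$ and the same construction produces a projective cover $P_1' \to K$, yielding the minimal projective presentation $P_1' \to eP_0 \to F \to 0$. The main obstacle I anticipate is checking that the Bass-style projective cover argument, which is usually phrased for ordinary modules, really transfers to the graded functor category: one needs the essential epimorphism and the lifted idempotents to live in degree zero. This works because $R$ is already the degree-zero endomorphism ring and the semiperfectness hypothesis on $\End_\C(C)_0$ has been arranged precisely so that all of the relevant operations (idempotent lifting, splittings, minimality of generators) take place in $\Gr(\C)_0$ without disturbing the grading.
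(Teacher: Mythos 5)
The paper offers no proof of this lemma at all---it is quoted directly from Auslander \cite[Corollary 4.13]{A}---and your argument is a correct reconstruction of that standard proof: decompose $P_0$ into shifted representables at indecomposables, use the graded Yoneda lemma and \cite[Theorem 27.6]{AF} to see that $\End_{\Gr(\C)_0}(P_0)$ is semiperfect, lift idempotents to split off a projective cover, and iterate on the (finitely generated, by the Schanuel-type comparison) kernel. The only place you elide detail is the transfer of the cover of the cyclic right $\End(P_0)$-module $\Hom(P_0,F)=d_0\End(P_0)$ back to an essential epimorphism $eP_0\to F$ of functors, but this is the standard step behind Auslander's Proposition 4.12/Corollary 4.13 and goes through exactly as you indicate, so your proposal matches the intended argument.
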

By our remark in the introduction, for a graded Krull-Schmidt
$K$-category $\C$ the category of finitely presented functors $\mod\C$
has minimal projective presentations.

There is another situation where projective covers always exists. To
motivate this recall the following situation for graded algebras. Let
$\L=\oplus_{i\geq 0}\L_i$ be a positively graded $K$-algebra with
$\L_0$ semisimple. Let $M$ be a graded module bounded below, that is,
$M$ is generated in some degrees $i_0<i_1<i_2<\cdots$. Then $M$ has a
projective cover. An analogue for graded functors is the following,
which is also related to the above version of the Nakayama's Lemma.

\begin{lem}\label{lem:posgradprojcover}
\sloppy Let $\C$ be a positively graded Krull-Schmidt $K$-category
with $\rad_{\C}(-,-)=\amalg_{i\geq 1}\Hom_\C(-,-)_i$.
\begin{enumerate}
\item[(a)] Any bounded below functor $F$ in $\Gr(\C)$ has a projective
  cover.
\item[(b)] Let $F$ in $\Gr(\C)$ be bounded below, and let $P\to F$ be
  a projective cover. Then $P/\rad_\C P\simeq F/\rad_\C F$.
\end{enumerate}
\end{lem}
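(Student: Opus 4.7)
The plan is to prove (a) by explicit construction, using Nakayama's Lemma to verify essentiality, and to deduce (b) from (a) via the standard uniqueness of projective covers. The crucial input is the radical hypothesis $\rad_\C = \amalg_{i\geq 1}\Hom_\C(-,-)_i$, which forces the degree zero subcategory of $\C$ to be highly ``semisimple'': for each indecomposable $C$ the ring $k_C := \End_\C(C)_0$ is a division algebra over $K$ (because $\End_\C(C)$ is graded local with $\rad \End_\C(C) = \amalg_{i\geq 1}\End_\C(C)_i$), and $\Hom_\C(C,D)_0 = 0$ for non-isomorphic indecomposables $C, D$. Consequently $F/\rad_\C F$ is annihilated by $\rad_\C$, and $(F/\rad_\C F)(C)_n$ is naturally a $k_C$-vector space for each indecomposable $C$ and each integer $n$, vanishing for $n$ below the lower bound $N$ of $F$.

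For (a), I would choose for each indecomposable $C$ and each $n$ a $k_C$-basis $\{v_\alpha^{C,n}\}_{\alpha \in I_{C,n}}$ of $(F/\rad_\C F)(C)_n$, lift each $v_\alpha^{C,n}$ to $s_\alpha^{C,n} \in F(C)_n$, and convert via Yoneda's Lemma \ref{lem:Yoneda} to a degree zero morphism $\varphi_\alpha^{C,n} : \Hom_\C(-,C)[-n] \to F$. Assembling these gives $\pi : P := \amalg_{C,n,\alpha} \Hom_\C(-,C)[-n] \to F$, where $P$ is projective and bounded below by $N$. Using the additivity of the radical ($\rad_\C$ commutes with coproducts of functors) together with the computation that $\Hom_\C(-,C)[-n]/\rad_\C\Hom_\C(-,C)[-n]$ is the simple functor $S_C$ shifted by $-n$, the induced map $\bar\pi : P/\rad_\C P \to F/\rad_\C F$ is an isomorphism at each indecomposable and hence everywhere. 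Surjectivity of $\pi$ then follows by applying Nakayama's Lemma to the bounded-below $\Coker \pi$, whose top vanishes. To verify essentiality, note that injectivity of $\bar\pi$ forces $\Ker \pi \subseteq \rad_\C P$, so any subfunctor $P' \subseteq P$ with $P' + \Ker \pi = P$ satisfies $P' + \rad_\C P = P$; forming $Q := P/P'$ and invoking Lemma \ref{lem:idealepi}(b) gives $\rad_\C Q = Q$, and a second application of Nakayama's Lemma to the bounded-below $Q$ forces $Q = 0$, hence $P' = P$.

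For (b), any projective cover $\tau : P_1 \to F$ is isomorphic to our $\pi : P \to F$ by the standard uniqueness argument (lift $\tau$ through $\pi$ and $\pi$ through $\tau$ using projectivity, then use essentialness on both sides to promote the composite self-maps of $P$ and $P_1$ to isomorphisms) in a manner compatible with the projections, so that $P_1/\rad_\C P_1 \simeq P/\rad_\C P \simeq F/\rad_\C F$ transports from the isomorphism $\bar\pi$ of (a). The principal technical point is the structural observation that $F/\rad_\C F$ decomposes as a coproduct of shifted simple functors $S_C$, which makes the $k_C$-basis choices meaningful and ensures both Nakayama invocations apply to genuinely bounded-below functors with vanishing tops; once the radical hypothesis has been used to identify the degree zero part of $\C$ as above, the remainder reduces to bookkeeping with Yoneda and the radical calculus already developed in the paper.
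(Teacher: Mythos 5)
Your proposal is correct and follows essentially the same route as the paper: both decompose $F/\rad_\C F$ into a coproduct of shifted simple functors $\Hom_\C(-,C)/\rad_\C(-,C)$, lift to a morphism from the corresponding coproduct of shifted representables via Yoneda, and then apply Nakayama's Lemma twice (once for surjectivity, once for essentiality), with (b) coming out of the construction. Your write-up merely makes explicit the details the paper leaves implicit (the division-ring structure of $\End_\C(C)_0$, the basis choices, and the uniqueness of projective covers for (b)); there is no substantive difference in strategy.
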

\begin{proof}
(a) Since any bounded below functor $F$ in $\Gr(\L)$ is a factor of
shifts of copies of $\Hom_\C(-,C)$ for $C$ indecomposable in $\C$, it
follows that $F/\rad_\C F$ is a direct sum of shifts of $t_i$ copies of
simple functors $(-,C_i)/\rad_\C(-,C_i)$ for some integers $t_i$, say
$\amalg_{i\in I}\left((-,C_i)/\rad_\C(-,C_i)[n_i]\right)^{t_i}$. Then we 
obtain an induced morphism $\pi\colon \amalg_{i\in
I}\left((-,C_i)[n_i]\right)^{t_i}\to F$. Using Nakayama's Lemma we
infer that $\pi$ is an epimorphism. Let $P=\amalg_{i\in
  I}(-,C_i)[n_i]^{t_i}$. Then we have that $P/\rad_\C P\simeq
F/\rad_\C F$ and using Nakayama's Lemma again we see that $\pi$ is an
essential epimorphism. Hence $\pi\colon P\to F$ is a projective cover.

(b) The claim follows from the construction in (a).
\end{proof}

\subsection{Simple functors} In Koszul theory simple modules play
a crucial role. So there is no surprise in generalizing to functor
categories that the simple functors are of equally great
importance. Here we show that for a positively graded Krull-Schmidt
$K$-category, there is one-to-one correspondence between
indecomposable objects and simple functors.

To show the above claim we shall need the following considerations.
Given a graded module $M=\amalg_{i\in\mathbb{Z}} M_i$ over some
positively graded ring, the set $M_{\geq n} = \amalg_{i\geq n} M_i$ is
a graded submodule of $M$. There is a similar construction for graded
functors. Let $\C$ be a positively graded $K$-category, and let $F$ be
in $\Gr(\C)$. Then we define $F_{\geq n}$ as
\[F_{\geq n}(C) = (F(C))_{\geq n}\]
for all $C$ in $\C$, and for $f\colon C\to C'$ in $\C$
\[F_{\geq n}(f)=F(f)|_{F_{\geq n}(C)}\colon F_{\geq n}(C)\to F_{\geq
  n}(C').\]
Since $\C$ is positively graded, we infer that $F_{\geq n}$ is a
graded subfunctor of $F$. As a consequence of this we obtain that a
simple object $S$ in $\Gr(\C)$ is supported only in one degree; that
is, $S(C)_i\neq (0)$ for one fixed $i=i_0$ for all objects $C$ in
$\C$.

When $\C$ in addition is Krull-Schmidt, we obtain even more as shown
next.
\begin{lem}\label{lem:simples}
\sloppy Let $\C$ be a positively graded Krull-Schmidt $K$-category, and let
  $\rad_\C(-,-)$ be the radical of $\C$.
\begin{enumerate}
\item[(a)] Any simple functor in $\Gr(\C)$ is of the form
  $\Hom_\C(-,C)/\rad_\C(-,C)$ for some indecomposable object $C$ in
  $\C$ up to shift.
\item[(b)] For all finitely generated functors $F$ in $\Gr(\C)$ the
  radical of $F$ is given by $\rad_\C F$.
\item[(c)] If $F$ is a finitely generated functor in $\Gr(\C)$ with
  $F=\rad_\C F$, then $F=0$.
\item[(d)] All finitely generated functors $F$ in $\Gr(\C)$ have a
  projective cover.
\end{enumerate}
\end{lem}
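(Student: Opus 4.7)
I will tackle the four parts in the order (a), (d), (c), (b). For (a), take a simple functor $S$ in $\Gr(\C)$. By the observation preceding the lemma, $S$ is supported in a single degree $i_0$. Using the Krull-Schmidt property I pick an indecomposable $C_0$ with $S(C_0)_{i_0} \neq 0$, and let $s$ be a nonzero element there. The graded Yoneda Lemma \ref{lem:Yoneda} converts $s$ into a degree-$i_0$ morphism $\psi \colon \Hom_\C(-,C_0) \to S$, which is surjective by simplicity of $S$. The remaining task is to identify $\ker \psi$ with $\rad_\C(-,C_0)$, which realizes $S$ as $\Hom_\C(-,C_0)/\rad_\C(-,C_0)$ up to the shift $[i_0]$.

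For the containment $\rad_\C(-,C_0) \subseteq \ker\psi$, the image of the composite $\rad_\C(-,C_0) \hookrightarrow \Hom_\C(-,C_0) \xrightarrow{\psi} S$ is a subfunctor of $S$, so it is either $0$ or all of $S$. The second alternative would put $s$ in the image of $\psi_{C_0}|_{\rad_\C(C_0,C_0)}$, giving an equation $s = S(f)(s)$ for some $f \in \rad_\C(C_0,C_0)$; degree bookkeeping forces $f \in \rad_\C(C_0,C_0)_0 \subseteq \rad(\End_\C(C_0)_0)$, using the locality of $\End_\C(C_0)_0$. Then $1 - f$ is a unit and $s = 0$, a contradiction. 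The opposite inclusion is then forced by maximality of $\ker\psi$ together with the observation that $\Hom_\C(-,C_0)/\rad_\C(-,C_0)$ is itself nonzero, since $1_{C_0}$ does not lie in $\rad_\C(C_0,C_0)$.

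For (d), every finitely generated $F$ is a quotient of a finite sum of shifted representables $\amalg_{i=1}^n \Hom_\C(-,C_i)[n_i]$, which is bounded below, hence so is $F$; then Lemma \ref{lem:posgradprojcover}(a) supplies a projective cover. For (c), a finitely generated $F$ with $F = \rad_\C F$ is bounded below with $F/\rad_\C F = 0$, so the graded Nakayama Lemma gives $F = 0$.

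For (b), I prove both inclusions. For $\rad_\C F \subseteq \rad F$: by (a), any simple quotient of $F$ is of the form $\Hom_\C(-,C)/\rad_\C(-,C)$ up to shift, and on such a functor any element of $\rad_\C$ acts as zero by the ideal property; hence $\rad_\C F$ lies in every maximal subfunctor. Conversely, pick a projective cover $\pi\colon P \to F$ via (d). Essentiality of $\pi$ forces $\ker\pi \subseteq \rad P$ (any maximal subfunctor of $P$ not containing $\ker\pi$ would, by maximality, sum with it to give $P$, contradicting essentiality), so $\rad F = \pi(\rad P)$. On $P = \amalg \Hom_\C(-,C_i)[n_i]$ a direct summand-wise computation using (a) yields $\rad P = \rad_\C P$, and Lemma \ref{lem:idealepi}(b) then gives $\pi(\rad_\C P) = \rad_\C F$. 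The main obstacle is the Nakayama-type step in (a), specifically the transition from $s \in \rad_\C(C_0,C_0) \cdot s$ to $s = 0$, which requires the identification $\rad_\C(C_0,C_0)_0 \subseteq \rad(\End_\C(C_0)_0)$ and the fact that units of $\End_\C(C_0)_0$ are units of $\End_\C(C_0)$.
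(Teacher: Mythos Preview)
Your argument for (a) is close in spirit to the paper's, though you argue $\rad_\C(-,C_0)\subseteq\ker\psi$ by contradiction where the paper instead computes $\rad_\C(X,C)$ explicitly on indecomposables and shows $\ker\eta\subseteq\rad_\C(-,C)$ directly. Both are fine, and your care about the degree-zero component $f_0$ and units of $\End_\C(C_0)_0$ versus $\End_\C(C_0)$ is well placed.

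The genuine gap is in your treatment of (d) and (c). You invoke Lemma~\ref{lem:posgradprojcover}(a) and the graded Nakayama Lemma, but both of those results carry the extra standing hypothesis $\rad_\C(-,-)=\amalg_{i\ge 1}\Hom_\C(-,-)_i$, which Lemma~\ref{lem:simples} does \emph{not} assume. A positively graded Krull-Schmidt category can have nontrivial radical in degree zero (take a single object with endomorphism ring $K[x]/(x^2)$ concentrated in degree $0$), and in such cases neither the bounded-below Nakayama argument nor the projective-cover construction of Lemma~\ref{lem:posgradprojcover} is available as stated. Since your (b) then rests on your (d), the whole chain (d)$\to$(c)$\to$(b) is unsupported under the actual hypotheses.

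The paper avoids this by reversing the logical order: it proves (b) first, using only that $\Hom_\C(-,B)/\rad_\C(-,B)$ is semisimple so that $F/\rad_\C F$ is semisimple and hence $\rad F\subseteq\rad_\C F$; then (c) follows from the \emph{ordinary} Nakayama Lemma applied to $F(B)$ as a module over the semiperfect ring $\End_\C(B)$; and finally (d) is obtained by lifting the isomorphism $F/\rad_\C F\simeq\Hom_\C(-,C)/\rad_\C(-,C)$ to a map $\Hom_\C(-,C)\to F$ and using (c) to check essentiality. None of these steps needs the radical to coincide with the positive-degree ideal. To repair your proof, you should either add that hypothesis explicitly (which would weaken the lemma) or rework (b)--(d) along the paper's lines.
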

The proof of this result is basically the same as the following
result, which we give a proof of.

\begin{lem}
Let $\C$ be a Krull-Schmidt $K$-category, and let $\rad_\C(-,-)$ be
the radical of $\C$.
\begin{enumerate}
\item[(a)] Any simple functor in $\Mod(\C)$ is of the form
  $\Hom_\C(-,C)/\rad_\C(-,C)$ for some indecomposable object $C$ in
  $\C$.
\item[(b)] For all finitely generated functors $F$ in $\Mod(\C)$ the
  radical of $F$ is given by $\rad_\C F$.
\item[(c)] If $F$ is a finitely generated functor in $\Mod(\C)$ with
  $F=\rad_\C F$, then $F=0$.
\item[(d)] All finitely generated functors $F$ in $\Mod(\C)$ have a
  projective cover.
\end{enumerate}
\end{lem}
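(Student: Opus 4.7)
The plan is to establish the four parts in the order (a), (c), (b), (d), since each later part leans on the earlier ones. Throughout I write $T_C := \Hom_\C(-,C)/\rad_\C(-,C)$ for an indecomposable $C \in \C$.

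For (a), start with a simple $S$ and an object $C_0$ with $S(C_0) \neq 0$; a nonzero $s \in S(C_0)$ gives by Yoneda a map $\Hom_\C(-, C_0) \to S$ which must be an epimorphism. Decomposing $C_0$ into indecomposables via Krull--Schmidt, one summand $C$ already surjects onto $S$; call the kernel $K$. Separately verify that $T_C$ is simple: in a Krull--Schmidt category any morphism between indecomposables is either an isomorphism or lies in the radical (since $\End_\C$ of an indecomposable is local), so $T_C$ vanishes on indecomposables not isomorphic to $C$ and equals the division ring $\End_\C(C)/\rad(\End_\C(C))$ at $C$; any nonzero subfunctor of $T_C$ then contains the class of $1_C$ and hence equals $T_C$. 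To conclude (a) it suffices to show $K \supseteq \rad_\C(-, C)$, because then $K/\rad_\C(-, C)$ is a proper subfunctor of the simple $T_C$, forcing equality. For $f \in \rad_\C(X, C)$ and $s = \pi_C(1_C)$: if $S(f)(s) \neq 0$ in $S(X)$ then simplicity forces it to generate $S$, but the generated subfunctor evaluated at $C$ is $\{S(fg)(s) \mid g \colon C \to X\}$, and each $fg$ lies in $\rad(\End_\C(C))$ since $\rad_\C$ is an ideal, whence $S(fg)(s) = 0$ using the identification of $S(C)$ with $\End_\C(C)/\rad(\End_\C(C))$ as a right $\End_\C(C)$-module. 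The main obstacle is precisely this identification, because one must simultaneously use Yoneda, the ideal property of $\rad_\C$, and simplicity of $S(C)$ as an $\End_\C(C)$-module.

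For (c), a finitely generated $F$ has an epimorphism $\Hom_\C(-,X) \twoheadrightarrow F$ with $X$ a finite direct sum of indecomposables, corresponding by Yoneda to a generator $x \in F(X)$. If $F = \rad_\C F$ then $x = \sum_k F(g_k)(y_k)$ with $g_k \in \rad_\C(X, Y_k)$; using that $x$ generates, each $y_k = F(h_k)(x)$, so $x = F(r)(x)$ for $r := \sum_k h_k g_k \in \rad(\End_\C(X))$. Since $\End_\C(X)$ is semiperfect, $1_X - r$ is an automorphism of $X$, hence $F(1_X - r)$ is an automorphism of $F(X)$, yet $F(1_X - r)(x) = x - F(r)(x) = 0$, forcing $x = 0$ and so $F = 0$.

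For (b), the inclusion $\rad_\C F \subseteq \rad F$ follows from (a): a maximal subfunctor $M \subset F$ yields a simple quotient $F/M \simeq T_C$, and the computation in (a) shows $T_C(f) = 0$ for $f \in \rad_\C$, so the composite $\rad_\C F \hookrightarrow F \twoheadrightarrow F/M$ is zero, giving $\rad_\C F \subseteq M$. For the reverse, choose an epimorphism $P = \amalg_{i=1}^n \Hom_\C(-, X_i) \twoheadrightarrow F$ with $X_i$ indecomposable; Lemma \ref{lem:idealepi} yields an epimorphism $\amalg_i T_{X_i} \simeq P/\rad_\C P \twoheadrightarrow F/\rad_\C F$, so $F/\rad_\C F$ is semisimple. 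Any element outside $\rad_\C F$ has a nonzero image in this semisimple quotient, and thus lies outside a complementary summand, i.e.\ a maximal subfunctor; pulling back gives a maximal subfunctor of $F$ avoiding the element.

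For (d), semisimplicity of $F/\rad_\C F$ gives a decomposition $\amalg_j T_{C_j}$ with $C_j$ indecomposable; setting $P = \amalg_j \Hom_\C(-, C_j)$ and using projectivity, the isomorphism $P/\rad_\C P \xrightarrow{\sim} F/\rad_\C F$ lifts through $F \twoheadrightarrow F/\rad_\C F$ to a map $\pi \colon P \to F$. For surjectivity: by construction $\pi(P) + \rad_\C F = F$, so by Lemma \ref{lem:idealepi} the quotient $F/\pi(P)$ satisfies $F/\pi(P) = \rad_\C(F/\pi(P))$, and (c) applied to this finitely generated functor gives $\pi(P) = F$. For essentiality: if $Q \subseteq P$ has $\pi(Q) = F$, then the composite $Q \hookrightarrow P \twoheadrightarrow P/\rad_\C P \simeq F/\rad_\C F$ is surjective, so $Q + \rad_\C P = P$, whence $P/Q = \rad_\C(P/Q)$ is finitely generated and (c) gives $Q = P$.
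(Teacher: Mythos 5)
Your proof is correct and follows essentially the same route as the paper's: a Yoneda epimorphism from a representable functor onto the simple, identification of its kernel with $\rad_\C(-,C)$ via the locality of endomorphism rings of indecomposables, semisimplicity of $F/\rad_\C F$, and a Nakayama-type argument. The only organizational differences are that you prove (c) directly from a single generator (so it no longer depends on (b)), and in (a) you establish the inclusion $\rad_\C(-,C)\subseteq \Ker\pi$ together with simplicity of $\Hom_\C(-,C)/\rad_\C(-,C)$ where the paper establishes the reverse inclusion and invokes maximality of the kernel; both variants are sound.
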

\begin{proof}
  (a) Let $S$ be a simple functor in $\Mod(\C)$. Then for some
  indecomposable object $C$ in $\C$, the vector space $S(C)$ is
  non-zero. By Yoneda's Lemma there exists a non-zero morphism
  $\eta\colon {\Hom_\C(-,C)}\to S$, which necessarily is an
  epimorphism. We claim that this is a projective cover of $S$.

  First we show that for a finitely generated functor $F$ in
  $\Mod(\C)$ and an epimorphism $\eta'\colon {\Hom_\C(-,X)}\to F$, the
  morphism $\eta'$ is minimal if and only if $\eta'$ is an essential
  epimorphism. Assume that $\eta'$ is minimal, and let $\rho\colon
  H\to {\Hom_\C(-,X)}$ be such that $\eta'\rho\colon H\to F$ is an
  epimorphism. Since $\Hom_\C(-,X)$ is projective, there exists a morphism
  $\sigma\colon {\Hom_\C(-,X)}\to H$ such that $\eta'\rho\sigma =
  \eta'$. Since $\eta'$ is minimal, $\rho\sigma$ is an isomorphism,
  and in particular $\rho$ is an epimorphism. This shows that $\eta'$
  is an essential epimorphism.

  Conversely, assume that $\eta'$ is an essential epimorphism, and let
  $\gamma\colon {\Hom_\C(-,X)}\to {\Hom_\C(-,X)}$ be such that $\eta'\gamma=
  \eta'$. It follows that $\gamma$ is an epimorphism, and since
  $\Hom_\C(-,X)$ is projective, there exists a morphism $\sigma\colon
  {\Hom_\C(-,X)}\to {\Hom_\C(-,X)}$ such that $\gamma\sigma =
  1_{\Hom_\C(-,X)}$. Then $\eta'\sigma = \eta'$, and as for $\gamma$ the
  morphism $\sigma$ is an epimorphism. It follows that $\sigma$ is an
  isomorphism. Hence also $\gamma$ is an isomorphism and $\eta'$ is
  minimal. This completes the proof of the above claim.

  Return to the morphism $\eta\colon {\Hom_\C(-,C)}\to S$ above. If $X$ is
  indecomposable, then 
\[\rad_\C(X,C)=\begin{cases}
    \Hom_\C(X,C), & \text{for\ } X\not\simeq C,\\
    \rad\End_\C(C), & \text{for\ } X\simeq C.
  \end{cases}\] 
We infer that $\Ker \eta \subseteq \rad_\C(-,C)$ and
  that $\Ker\eta = \rad_\C(-,C)$, since $\Ker\eta$ is a maximal
  subfunctor. Hence $S\simeq \Hom_\C(-,C)/\rad_\C(-,C)$. It remains to
  show that $\eta$ is minimal (essential epimorphism). Let
  $\gamma\colon {\Hom_\C(-,C)}\to {\Hom_\C(-,C)}$ be a natural transformation
  such that $\eta\gamma = \eta$. By Yoneda's Lemma $\gamma$ is given
  as $\Hom_\C(-,h)$ for some $h$ in $\End_\C(C)$. If $h$ is not an
  isomorphism, then $h$ is in the radical of the local ring
  $\End_\C(C)$, by our assumptions on $\C$. Hence $1-h$ is invertible
  or equivalently an isomorphism. This implies that $\eta=0$, which is
  a contradiction. Therefore $h$ is an isomorphism and $\eta\colon
  \Hom_\C(-,C)\to S$ is a projective cover.

  (b)\sloppy\ Now let $F$ be finitely generated in $\Mod(\C)$ with
  $\eta\colon {\Hom_\C(-,B)}\to F$ being an epimorphism. Then
  $\eta(\rad_\C(-,B))=\rad_\C F$ and $\Hom_\C(-,B)/\rad_\C(-,B)\to
  F/\rad_\C F$ is an epimorphism. Therefore $F/\rad_\C F$ is
  semisimple and $\rad(F/\rad_\C F)=(0)$. Moreover,
  $\eta(\rad(-,B))\subseteq \rad F$. Since $\C$ is Krull-Schmidt,
  $\rad(-,B)= \rad_\C(-,B)$ and therefore $\rad_\C F\subseteq \rad
  F$. It follows that $\rad(F/\rad_\C F)=\rad F/\rad_\C F$ and
  consequently $\rad F = \rad_\C F$.

  (c) Let $F$ be finitely generated in $\Mod(\C)$ with $F=\rad_\C F$
  and $\eta\colon {\Hom_\C(-,B)}\to F$ being an epimorphism. From (b) we
  have that $\eta(\rad\End_\C(B))=\rad_\C F(B)$. Since $\eta_B\colon
  \Hom_\C(B,B)\to F(B)$ is a morphism of $\End_\C(B)$-modules, $\rad_\C
  F(B)= (\rad\End_\C(B))F(B)$. By Nakayama's Lemma $F(B)=(0)$ and
  therefore $F=0$.

  (d) Keeping the notation and assumptions from (b), we have
  \[F/\rad_\C F\simeq \Hom_\C(-,C)/\rad_\C(-,C)\] for some $C$ in
  $\C$. This gives rise to a morphism $\eta\colon {\Hom_\C(-,C)}\to F$
  with \[\overline{\eta}\colon {\Hom_\C(-,C)}/\rad_\C(-,C)\to F/\rad_\C
  F\] an isomorphism. Let $\gamma\colon H\to {\Hom_\C(-,C)}$ such that
  $\eta\gamma$ is an epimorphism. Then $\Hom_\C(-,C)/\Im\gamma =
  \rad_\C({\Hom_\C(-,C)}/\Im\gamma)$, so that by (c) $\gamma$
  is an epimorphism and $\eta\colon {\Hom_\C(-,C)}\to F$ is a projective
  cover.
\end{proof}

\subsection{Duality}\sloppy For finite dimensional algebras, the vector
space duality $D=\Hom_K(-,K)$ provides a bridge between left and right
finite dimensional modules. For graded $K$-algebras and graded modules
$M=\amalg_{i\in \Z}M_i$ over such, each graded part $M_i$ is a vector
space over $K$. Then one defines the graded dual of $M$ as
$\amalg_{i\in \Z} D(M_{-i})$, where the degree $i$ part is
$D(M_{-i})$. A similar construction can be carried out for graded
functors, and in the following result we describe this construction
and some elementary properties and consequences of having such a
functor.

\begin{prop}\label{prop:duality}
Let $\C$ be a graded $K$-category. Then there exists a contravariant
functor
\[D\colon \Gr(\C)\to \Gr(\C^\op)\]
defined by
\[D(F)(X)=\amalg_{j\in\Z}\Hom_K(F(X)_j,K)\]
for $X$ in $\C$, where $D(F)(X)_i=\Hom_K(F(X)_{-i},K)$ for $i$ in
$\Z$.
Then the following statements hold:
\begin{enumerate}
\item[(a)] For a set of functors $\{F_i\}_{i\in I}$ we have that
$D(\amalg_{i\in I}F_i)=\prod_{i\in I}D(F_i)$.
\item[(b)] There exists an injective natural transformation
$\eta\colon 1_{\Gr(\C)}\to D^2$. If $F$ is locally finite, then
$\eta_F$ is an isomorphism of functors. In particular, if $\lfGr(\C)$
denotes the full subcategory of $\Gr(\C)$ consisting of locally
finite graded contravariant functors, then $D$ induces a duality
\[D\colon \lfGr(\C)\to\lfGr(\C^\op).\]
\item[(c)] If $B\colon \C^\op\times\C\to \Gr(K)$ is a graded
bifunctor, $G$ is in $\Gr(\C^\op)$ and $F$ is in $\Gr(\C)$, we have
natural isomorphisms
\[\varphi_{F,G}\colon \Hom_K(G\otimes_\C F,K)\simeq
\Hom_{\Gr(\C)}(F,D(G))\]
where $\Hom_K(G\otimes_\C F,K)$ is the dual of the graded vector space
$G\otimes_\C F$, and
\[\psi_{F,B}\colon D(B\otimes_\C F)\simeq
\Hom_{\Gr(\C)}(F,D(B))\]
as functors in $\Gr(\C^\op)$.
\item[(d)] $\Gr(\C)$ has enough injectives.
\item[(e)] Assume in addition that $\C$ is a positively graded
Krull-Schmidt category where for each indecomposable object $C$ in
$\C$ the factor $\End_\C(C)/\rad\End_\C(C)$ is finite dimensional over
$K$.

Then the functors of finite length are in $\lfGr(\C)$ (and
respectively $\lfGr(\C^\op)$), and the duality
\[D\colon \lfGr(\C)\to\lfGr(\C^\op)\]
takes functors of finite length to functors of finite length.
\end{enumerate}
\end{prop}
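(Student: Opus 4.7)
The plan is to reduce everything degree by degree to the $K$-vector space duality $D_K=\Hom_K(-,K)$, which is exact, converts coproducts to products, and restricts to a duality on finite dimensional vector spaces, and then to bootstrap from representable functors via projective presentations. For part (a), evaluating at an object $X$ and fixing a degree reduces the statement to the familiar $D_K(\amalg_i V_i)\simeq\prod_i D_K(V_i)$, with naturality in $X$ a routine check. For part (b), I would define $\eta_F\colon F\to D^2(F)$ at each object $X$ and each degree $j$ as the canonical double-dual map $F(X)_j\to D_KD_K(F(X)_j)$, verify it commutes with $F(f)$ for graded morphisms $f$ in $\C$ via a small diagram chase, and invoke that $V\to V^{**}$ is always injective and is bijective precisely when $\dim_K V<\infty$; the induced duality on $\lfGr(\C)$ follows formally.

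For part (c), my strategy is the standard device: verify the first isomorphism $\varphi_{F,G}$ when $F=\Hom_\C(-,C_0)[n]$ is a shifted representable, then extend by a left-exactness and coproduct-preservation argument using a graded projective presentation of $F$. In the representable case, property (iii) of the tensor product gives $G\otimes_\C\Hom_\C(-,C_0)[n]\simeq G(C_0)[n]$, so the left-hand side becomes the graded dual $D(G)(C_0)[n]$, while Lemma \ref{lem:Yoneda} identifies the right-hand side $\Hom_{\Gr(\C)}(\Hom_\C(-,C_0)[n],D(G))$ with the same graded vector space; matching degrees and shifts is then routine. Both functors are contravariant and left exact in $F$ (the tensor side because $D_K$ converts right exact sequences into left exact ones) and send coproducts in $F$ to products, so applying them to a graded projective presentation of $F$ and passing to kernels yields $\varphi_{F,G}$ in general. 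The second isomorphism $\psi_{F,B}$ follows by evaluating $\varphi_{F,B(X,-)}$ at each $X\in\C$ and checking naturality in $X$.

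For part (d), the key observation is that $D$ carries projectives in $\Gr(\C^\op)$ to injectives in $\Gr(\C)$: for $P=\amalg_j\Hom_\C(C_j,-)[n_j]$ projective in $\Gr(\C^\op)$, the first formula of (c) gives
\[\Hom_{\Gr(\C)}(-,D(P))\simeq \Hom_K(P\otimes_\C-,K),\]
and since both $P\otimes_\C-$ and $\Hom_K(-,K)$ are exact, $D(P)$ is injective. To embed an arbitrary $F\in\Gr(\C)$ into an injective, I combine the monomorphism $\eta_F\colon F\hookrightarrow D^2(F)$ from (b) with a projective surjection $P\to D(F)$ in $\Gr(\C^\op)$ (which always exists via the canonical surjection from a coproduct of shifted representables); applying the exact contravariant functor $D$ yields a monomorphism $D^2(F)\hookrightarrow D(P)$, and composition with $\eta_F$ supplies the required embedding.

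For part (e), the additional hypothesis makes each simple functor from Lemma \ref{lem:simples} locally finite: it is concentrated in a single degree, vanishes on every indecomposable not isomorphic to its support $C$, and at $C$ equals $\End_\C(C)/\rad(\End_\C(C))$, which is finite dimensional by assumption. Induction on composition length via the five lemma then propagates local finiteness to all finite length functors. Since $D_K$ preserves dimensions, $D$ sends the simple at $C$ with shift $n$ in $\Gr(\C)$ to the corresponding simple at $C$ with shift $-n$ in $\Gr(\C^\op)$, and exactness of $D$ preserves composition length in general. The step I expect to be the main obstacle is the careful bookkeeping of degrees and shifts in part (c), ensuring that all natural transformations and adjunction isomorphisms have degree zero and transport correctly through the projective presentation.
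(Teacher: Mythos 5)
Your proposal is correct and follows essentially the same route as the paper's proof: degreewise double-dual maps for (a) and (b), verification of $\varphi_{F,G}$ on shifted representables via Yoneda and extension through a graded projective presentation using left exactness and coproduct-to-product conversion for (c), the embedding $F\hookrightarrow D^2(F)\hookrightarrow D(P)$ with $D$ of projectives shown injective via the tensor-Hom formula for (d), and local finiteness of simples propagated to finite length objects for (e). No substantive differences.
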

\begin{proof} It is clear that $D$ as defined above gives rise to a
functor from $\Gr(\C)$ to $\Gr(\C^\op)$.

(a) This follows directly from the definitions involved.

(b) Define $\eta_F=\{\eta_F(X)\}_{X\in\C}\colon F\to D^2(F)$ in the
following way. For each object $X$ in $\C$, the set
$D^2(F)(X)_i=\Hom_K(D(F)(X)_{-i},K)$ and
$D(F)(X)_{-i}=\Hom_K(F(X)_{i},K)$. Let $f$ be in $F(X)_{i}$, then
$\eta_F(X)(f)\colon D(F)(X)_{-i}\to K$. Let $g$ be in $D(F)(X)_{-i}$,
then $\eta_F(X)(f)(g)=g(f)$.  Since $F(X)_i$ is a $K$-vector space, as
usual, $\eta_F(X)$ is injective. Hence, $\eta_F(X)$ is injective for
each $X$ in $\C$ and $\eta_F=\{\eta_F(X)\}_{X\in \C}$ is a graded
injective natural transformation.

If $F$ is locally finite, then $(\eta_F(X))_i\colon F(X)_i\to
(D^2(F)(X))_i$ is an isomorphism for all $i$ in $\Z$ and for all $X$
in $\C$. It follows that $\eta_F(X)\colon F(X)\to D^2(F)(X)$ is an
isomorphism and $\eta_F\colon F\to D^2(F)$ is an isomorphism of
functors. The last claim follows directly from this.

(c) Let $G$ be in $\Gr(\C^\op)$. For $M$ in $\C$ we have isomorphisms
\begin{align}
D(G\otimes_\C \Hom_\C(-,M)) & \simeq D(G(M))\notag\\
    & \simeq D(G)(M)\notag\\
    & \simeq \Hom_{\Gr(\C)}(\Hom_\C(-,M),D(G)),\notag
\end{align}
and this give rise to a natural isomorphism $\alpha\colon
D(G\otimes_\C \Hom_\C(-,M))\to \Hom_{\Gr(\C)}(\Hom_\C(-,M),D(G))$.

Now let $F$ be in $\Gr(\C)$. Assume that
\[\amalg_i\Hom_\C(-,M_i)[m_i] \to \amalg_i\Hom_\C(-,N_i)[n_i]\to F\to
0\]
is a graded projective presentation of $F$ in $\Gr(\C)$. This gives
rise to the following commutative diagram with exact rows, where the
horizontal morphisms are induced by the natural isomorphism $\alpha$
described above.
\[\xymatrix@R=10pt{
0\ar[d] & 0\ar[d] \\
{\Hom_{\Gr(\C)}(F,D(G))}\ar[d]\ar[r] & D(G\otimes_\C F) \ar[d]\\
\prod_i {\Hom_{\Gr(\C)}(\Hom_\C(-,N_i)[n_i],D(G))}\ar[d]\ar[r]^-\sim &
\prod_i D(G\otimes_\C \Hom_\C(-,N_i)[n_i])\ar[d]\\
\prod_i {\Hom_{\Gr(\C)}(\Hom_\C(-,M_i)[m_i],D(G))}\ar[r]^-\sim & 
\prod_i D(G\otimes_\C\Hom_\C(-,M_i)[m_i])}\]
Therefore $\varphi=\varphi_{F,G}\colon \Hom_{\Gr(\C)}(F,D(G))\to
D(G\otimes_\C F)$ induced by $\alpha$ is an isomorphism, and the
first claim follows.

Let $B\colon \C^\op\times\C\to \Gr(K)$ be a graded bifunctor, and let
$F$ be as above. Note that $(B\otimes_\C F)(X)=B(X,-)\otimes_\C F$ and
that $\Hom_{\Gr(\C)}(F,D(B))(X) = \Hom_{\Gr(\C)}(F,D(B(X,-)))$. For
any $X$ in $\C$ define
\begin{multline}
\psi_{F,B,X}=\varphi_{F,B(X,-)}\colon
D(B\otimes_\C F)(X)=D(B(X,-)\otimes_\C F)\to \notag\\
\Hom_{\Gr(\C)}(F,D(B(X,-)))=\Hom_{\Gr(\C)}(F,D(B))(X).\notag
\end{multline}
It follows from the above that $\psi_{F,B}$ is a natural isomorphism
of functors.

(d) Let $G$ be a functor in $\Gr(\C)$. Then $D(G)$ in $\Gr(\C^\op)$ is
a factor of a projective functor $\amalg_{j\in
J}\Hom_\C(C_j,-)[n_j]\to D(G)$. The inclusions $G\to D^2(G)$ and
$D^2(G)\to \prod_{j\in J} D((C_j,-))[-n_j]$ induce an inclusion $G\to
\prod_{j\in J}D((C_j,-))[-n_j]$. Hence, if $D(C,-)$ is an injective
functor for all objects $C$ in $\C$, the category $\Gr(\C)$ has enough
injective objects.

Consider an exact sequence of graded functors $0\to F\to G\to H\to 0$
in $\Gr(\C)$. Given the natural isomorphisms
\[\Hom(F,D(\Hom_\C(C,-)))\simeq D(\Hom_\C(C,-)\otimes_\C F)
\simeq D(F(C))\]
and the exactness of the sequence $0\to D(H(C))\to D(G(C))\to
D(F(C))\to 0$ for all objects $C$ in $\C$, we infer that
$\Hom_{\Gr(\C)}(-,D(\Hom_\C(C,-)))$ is an exact functor and
$D(\Hom_\C(C,-)))$ is an injective object in $\Gr(\C)$ for all $C$ in
$\C$.

(e) Under the assumptions in (e) the simple functors are given as
$S_C=\Hom_\C(-,C)/\rad_\C(-,C)$ for some indecomposable object $C$ in
$\C$ up to shift by Lemma \ref{lem:simples}. Since $S_C$ only has
support in $C$ and $S_C(C)=\End_\C(C)/\rad\End_\C(C)$, our assumptions
imply that $S_C$ is locally finite for all indecomposable objects $C$
in $\C$. Hence all functors of finite length are locally finite, that
is, all functors of finite length are in $\lfGr(\C)$.

In (a) we observed that the functor $D$ is a duality from
$\lfGr(\C)$ to $\lfGr(\C^\op)$. It follows from this that if $S$ is a
simple functor, then $D(S)$ is also a simple functor. Consequently, it
follows that if $F$ has finite length, $DF$ also has finite length.
\end{proof}

\subsection{Homological algebra}
Here we discuss some elementary homological facts that we need later
in this series of papers. Among other things we show that a bounded
below flat functor is projective over a positively graded
Krull-Schmidt $K$-category with the radical given by the positive
degrees and that the maximum of the projective dimension of the simple
functors in this setting gives the global dimension of $\Gr(\C)$.

We start with an immediate homological corollary of the duality
considered in the previous subsection.
\begin{cor}
Let $\C$ be a graded $K$-category and consider the functor $D\colon
\Gr(\C)\to \Gr(\C^\op)$ defined above. Then
\[\Hom_K(\Tor^\C_i(G,F),K)\simeq \Ext^i_\C(F,D(G))\]
for all $F$ in $\Gr(\C)$ and $G$ in $\Gr(\C^\op)$, and all $i\geq 0$.
\end{cor}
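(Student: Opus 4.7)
The plan is to reduce the claim to the natural isomorphism
\[\varphi_{F,G}\colon \Hom_K(G\otimes_\C F,K)\simeq \Hom_{\Gr(\C)}(F,D(G))\]
from part (c) of Proposition \ref{prop:duality}, which is precisely the case $i=0$, and then to derive both sides in $F$ simultaneously using a graded projective resolution.

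First I would choose a graded projective resolution
\[\cdots\to P_2\to P_1\to P_0\to F\to 0\]
of $F$ in $\Gr(\C)$; such resolutions exist since $\Gr(\C)$ has enough projectives (the representable functors $\Hom_\C(-,C)[m]$ generate). By definition $\Tor^\C_i(G,F)=H_i(G\otimes_\C P_\bullet)$ and $\Ext^i_\C(F,D(G))=H^i(\Hom_{\Gr(\C)}(P_\bullet,D(G)))$. Applying $\varphi_{P_n,G}$ for each $n$ and using naturality of $\varphi$ in the first variable (which follows from its construction in the proof of Proposition \ref{prop:duality}(c) through the projective presentation), one obtains an isomorphism of cochain complexes
\[\Hom_K(G\otimes_\C P_\bullet,K)\simeq \Hom_{\Gr(\C)}(P_\bullet,D(G)).\]

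Next I would take cohomology of both sides. The right-hand side immediately computes $\Ext^i_\C(F,D(G))$. For the left-hand side, since $K$ is a field, the contravariant functor $\Hom_K(-,K)$ is exact on $K$-vector spaces, so it commutes with taking (co)homology of complexes of $K$-vector spaces; hence
\[H^i(\Hom_K(G\otimes_\C P_\bullet,K))\simeq \Hom_K(H_i(G\otimes_\C P_\bullet),K)=\Hom_K(\Tor^\C_i(G,F),K).\]
Combining the two identifications yields the desired isomorphism.

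The main point to check carefully is that $\varphi_{F,G}$ is genuinely natural in $F$ (not merely an isomorphism for fixed $F$), so that it assembles into a map of complexes commuting with the differentials of $P_\bullet$; this is where one uses that $\varphi$ was defined via a projective presentation and shown in Proposition \ref{prop:duality}(c) to fit in a commutative square against representables. Once this naturality is in hand, everything else is a formal consequence of the exactness of $\Hom_K(-,K)$ over a field.
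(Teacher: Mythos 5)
Your argument is correct and is precisely the derivation the paper intends: the corollary is stated there without proof as an ``immediate'' consequence of Proposition \ref{prop:duality}(c), and your reduction---apply $\varphi_{-,G}$ termwise to a graded projective resolution of $F$ and use exactness of $\Hom_K(-,K)$ over the field $K$ to commute dualization with homology---is the standard way to make that immediacy explicit. Your attention to the naturality of $\varphi_{F,G}$ in $F$ is well placed, and it is covered by the proposition's assertion that the $\varphi_{F,G}$ are natural isomorphisms.
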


In the following let $\C$ be a positively graded Krull-Schmidt
$K$-category with $\rad_\C(-,-)=\amalg_{i\geq 1}\Hom_\C(-,-)_i$. In
this setting we discuss bounded below functors. To this end we need to
observe the following. As we saw in Lemma \ref{lem:simples} all simple
functors in $\Gr(\C)$ are given as
$S_C=\Hom_\C(-,C)/\rad_\C(-,C)=\Hom_\C(-,C)_0$ for some indecomposable
object $C$ in $\C$. Then we have the following.

\begin{prop}\label{prop:homological}
Let $\C$ be a positively graded Krull-Schmidt $K$-category with
$\rad_\C(-,-)=\amalg_{i\geq 1}\Hom_\C(-,-)_i$.  Let $F$ be a
bounded below functor in $\Gr(\C)$.
\begin{enumerate}
\item[(a)] If $S\otimes_\C F=(0)$ for all simple functors $S$ in
$\Gr(\C^\op)$, then $F$ is zero.
\item[(b)] If $\Tor^\C_1(S,F)=(0)$ for all simple functors $S$ in
$\Gr(\C^\op)$, then $F$ is a projective object in $\Gr(\C)$. In
particular, a bounded below flat functor is a projective functor.
\item[(c)] Assume that each simple object $S$ in $\Gr(\C^\op)$ has
projective dimension at most $n$. Then $F$ has projective dimension at
most $n$.
\end{enumerate}
\end{prop}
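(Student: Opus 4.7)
For (a) the plan is to identify $S\otimes_\C F$ with a graded piece of $F/\rad_\C F$ and then invoke Nakayama's Lemma. By the $\Gr(\C^\op)$-analogue of Lemma \ref{lem:simples}(a), every simple object $S$ is, up to shift, of the form $S_C=\Hom_\C(C,-)/\rad_\C(C,-)$ for some indecomposable $C$ in $\C$. Tensoring the short exact sequence $0\to\rad_\C(C,-)\to\Hom_\C(C,-)\to S_C\to 0$ on the right with $F$ and arguing exactly as in Lemma \ref{lem:tensormoduloideal} (so that the image of $\rad_\C(C,-)\otimes_\C F\to\Hom_\C(C,-)\otimes_\C F=F(C)$ is $(\rad_\C F)(C)$) gives
\[
S_C\otimes_\C F\cong F(C)/(\rad_\C F)(C)=(F/\rad_\C F)(C)
\]
as graded vector spaces; shifts of $S_C$ shift the right-hand side. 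Hence the hypothesis forces $(F/\rad_\C F)(C)=0$ for every indecomposable $C$ and therefore, by the Krull--Schmidt assumption on $\C$, for every object of $\C$. So $F/\rad_\C F=0$, and Nakayama's Lemma applied to the bounded below $F$ yields $F=0$.

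For (b) the plan is to reduce to (a) via a projective cover. By Lemma \ref{lem:posgradprojcover}(a) choose a projective cover $\pi\colon P\to F$, with $P/\rad_\C P\simeq F/\rad_\C F$. Since the shifts occurring in $P=\amalg_j\Hom_\C(-,C_j)[n_j]$ are determined by the bounded-below generating degrees of $F/\rad_\C F$, the functor $P$ is bounded below, and hence so is $K=\Ker\pi$. The long exact Tor-sequence
\[
0=\Tor^\C_1(S,P)\to\Tor^\C_1(S,F)\to S\otimes_\C K\to S\otimes_\C P\to S\otimes_\C F\to 0
\]
together with the identification from (a) shows that the rightmost map is the one induced by $P/\rad_\C P\to F/\rad_\C F$, which is an isomorphism. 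Combined with $\Tor^\C_1(S,F)=0$, this forces $S\otimes_\C K=0$ for every simple $S$, and part (a) then gives $K=0$, so $F\simeq P$ is projective. The assertion on bounded below flat functors is then immediate, since flat functors have $\Tor^\C_1(S,-)=0$.

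For (c) the plan is dimension shifting. Iterating Lemma \ref{lem:posgradprojcover}(a) constructs a projective resolution $\cdots\to P_1\to P_0\to F\to 0$ in which every $P_i$, and hence every syzygy $\Omega^i F$, is bounded below. The assumption $\pd S\leq n$ for every simple $S$ in $\Gr(\C^\op)$ implies $\Tor^\C_{n+1}(S,-)=0$; dimension shifting yields $\Tor^\C_1(S,\Omega^n F)\simeq\Tor^\C_{n+1}(S,F)=0$ for every simple $S$. Applying (b) to the bounded below functor $\Omega^n F$ shows it is projective, so $\pd F\leq n$.

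The one technical point, localized in (a), is the identification $S_C\otimes_\C F\cong(F/\rad_\C F)(C)$; once this is established, parts (b) and (c) become routine consequences of (a), Lemma \ref{lem:posgradprojcover}, Nakayama's Lemma, and elementary dimension shifting for Tor.
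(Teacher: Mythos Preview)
Your proof is correct and follows essentially the same route as the paper: identify $S_C\otimes_\C F$ with $(F/\rad_\C F)(C)$ via Lemma~\ref{lem:tensormoduloideal} and apply Nakayama for (a), take a projective cover and use the Tor long exact sequence together with $P/\rad_\C P\simeq F/\rad_\C F$ to reduce to (a) for (b), and dimension-shift for (c). The only cosmetic differences are that you spell out why $P$ and $\Ker\pi$ are bounded below and add the Krull--Schmidt step passing from indecomposable to arbitrary objects.
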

\begin{proof}
(a) Let $F$ be a bounded below functor in $\Gr(\C)$. For any
simple functor $S=\Hom_\C(C,-)/\rad_\C(C,-)$ we have that
\[(0)=S\otimes_\C F=\Hom_\C(C,-)/\rad_\C(C,-)\otimes_\C F\simeq
F(C)/(\rad_\C F)(C)\]
for all indecomposable $C$ in $\C$. It follows that $F/\rad_\C
F=(0)$. By Nakayama's Lemma we infer that $F=(0)$.

(b) Let $F$ be a bounded below functor in $\Gr(\C)$, and let
\[0\to \Omega_\C^1(F)\to P \to F\to 0\]
be a projective cover of $F$. Since $\C$ is positively graded, it follows
that $\Omega_\C^1(F)$ is also bounded below. The above exact sequence
gives rise to the exact sequence
\[0\to \Tor^\C_1(S,F)\to S\otimes_\C \Omega^1_\C(F)\to S\otimes_\C
P\to S\otimes_\C F\to 0\]
by Lemma \ref{lem:posgradprojcover} (b).
For $S=\Hom_\C(C,-)/\rad_\C(C,-)$ with $C$ indecomposable in $\C$, we
have that
\[S\otimes_\C P\simeq P(C)/\rad_\C P(C)\simeq F(C)/\rad_\C F(C)\simeq
S\otimes_\C F.\]
Therefore $(0)=\Tor^\C_1(S,F)\simeq S\otimes_\C \Omega^1_\C(F)$. By
(a) we conclude that $\Omega^1_\C(F)=(0)$ and that $F$ is projective.

(c) By assumption $\Tor^\C_{n+1}(S,-)=(0)$ for all simple functors $S$
in $\Gr(\C^\op)$. By dimension shift $\Tor^\C_1(S,\Omega^n_\C(F))=(0)$
for any bounded below functor $F$ in $\Gr(\C)$. Since $\Omega^n_\C(F)$
is bounded below, we infer from (b) that $\Omega^n_\C(F)$ is
projective and that the projective dimension of $F$ is at most $n$.
\end{proof}
Using this we show that the maximum of the projective dimensions of
the simple functors is the global dimension of the category $\Gr(\C)$.

\begin{thm}\label{thm:globaldim}
Let $\C$ be a positively graded Krull-Schmidt $K$-category with
$\rad_\C(-,-)=\amalg_{i\geq 1}\Hom_\C(-,-)_i$.
\begin{enumerate}
\item[(a)] Assume that all simple functors $S$ in $\Gr(\C^\op)$
have projective dimension at most $n$.  Then $\Gr(\C)$ has global
dimension at most $n$.
\item[(b)] The global dimension of $\Gr(\C)$ is finite if and only if
  the global dimension of $\Gr(\C^\op)$ is finite. When one of the
  global dimensions is finite, then they are equal.
\end{enumerate}
\end{thm}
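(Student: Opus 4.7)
The proof of part (a) starts from Proposition \ref{prop:homological}(c), which under the stated hypothesis already gives that every bounded below functor in $\Gr(\C)$ has projective dimension at most $n$. The task is to remove the boundedness hypothesis and derive the same bound for arbitrary functors.

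The approach I would take is first to establish $\Tor$-vanishing and then convert back to $\Ext$ via the duality of Proposition \ref{prop:duality}. For any $F \in \Gr(\C)$ one has the filtered-colimit decomposition $F = \bigcup_{k \geq 0} F_{\geq -k}$ into its bounded below subfunctors (a directed union, since $\C$ is positively graded); each piece has projective dimension at most $n$ by Proposition \ref{prop:homological}(c), so
\[
  \Tor^\C_j(G, F) \;=\; \varinjlim_k \Tor^\C_j(G, F_{\geq -k}) \;=\; 0 \qquad \text{for all } j \geq n+1,\ G \in \Gr(\C^\op).
\]
The $\Ext$--$\Tor$ duality from Proposition \ref{prop:duality}(c) then yields $\Ext^j_\C(F, D(G)) = 0$ for all $j \geq n+1$ and all $G \in \Gr(\C^\op)$.

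The main obstacle is extending this vanishing from the cogenerating class $\{D(G) : G \in \Gr(\C^\op)\}$ to an arbitrary $G' \in \Gr(\C)$. I would exploit the natural embedding $G' \hookrightarrow D^2(G')$ of Proposition \ref{prop:duality}(b), noting that $D^2(G') = D(D(G'))$ lies in the cogenerating class; iterating with $G_{i+1}' := D^2(G_i')/G_i'$ produces an exact coresolution $0 \to G' \to D^2(G') \to D^2(G_1') \to D^2(G_2') \to \cdots$ all of whose terms have $\Ext^j(F, -) = 0$ for $j \geq n+1$. Feeding this coresolution into the standard hyperhomology spectral sequence for $\Ext^\bullet_\C(F, -)$ should force $\Ext^{n+1}_\C(F, G') = 0$, yielding $\pd F \leq n$ and hence the claimed global dimension bound. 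This final step---transferring the Ext-vanishing from the cogenerating class to the whole category---is the delicate point of the proof.

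For part (b), the argument is symmetric: apply part (a) to both $\C$ and $\C^\op$. The opposite category $\C^\op$ inherits all standing hypotheses, since the positive grading and Krull-Schmidt property are self-dual and $\rad_{\C^\op} = \rad_\C$ by the discussion in Example \ref{exam:assgraded}. If $\Gr(\C)$ has finite global dimension equal to $m$, then in particular every simple functor in $\Gr(\C)$ has projective dimension at most $m$, and part (a) applied with $\C^\op$ in place of $\C$ gives that $\Gr(\C^\op)$ has global dimension at most $m$. The reverse inequality is obtained symmetrically by interchanging the roles of $\C$ and $\C^\op$, yielding both the equivalence of finiteness of the two global dimensions and their equality in that case.
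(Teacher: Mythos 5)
Your part (b) is correct and is exactly the paper's argument: the hypotheses are self-dual, so one applies (a) to $\C$ and to $\C^\op$ and compares. The problem is in part (a). Your first two steps are fine: writing $F=\varinjlim_k F_{\geq -k}$, using Proposition \ref{prop:homological}(c) on each bounded below piece, and concluding $\Tor^\C_j(G,F)=0$ for $j\geq n+1$, hence $\Ext^j_\C(F,D(G))=0$ for all $G$ in $\Gr(\C^\op)$ by the duality corollary. But the step you yourself flag as delicate does not go through. The coresolution $0\to G'\to D^2(G')\to D^2(G_1')\to\cdots$ has terms on which you only know $\Ext^q_\C(F,-)=0$ for $q\geq n+1$, not for all $q\geq 1$; so in the hypercohomology spectral sequence $E_1^{p,q}=\Ext^q_\C(F,I^p)\Rightarrow\Ext^{p+q}_\C(F,G')$ the entries on the antidiagonal $p+q=n+1$ with $1\leq p\leq n+1$ (hence $q\leq n$) are uncontrolled, and nothing forces the abutment $\Ext^{n+1}_\C(F,G')$ to vanish. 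More fundamentally, after the colimit step your argument uses nothing about $F$ except that its flat dimension is at most $n$, so if the remaining steps were valid they would prove in full generality that flat dimension bounds projective dimension; this already fails for $\mathbb{Q}$ over $\mathbb{Z}$ (flat but not projective). The paper's Proposition \ref{prop:homological}(b) bridges exactly this gap, but only for bounded below functors, because it needs Nakayama's Lemma and projective covers; it is not available for an arbitrary $F$.

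The paper avoids duality entirely and argues on the level of resolutions: from the exact sequences $0\to F_{\geq -k}\to F_{\geq -k-1}\to F_{-k-1}\to 0$ and the horseshoe lemma it builds projective resolutions of the $F_{\geq -k}$ that are compatible with the inclusions, each of length at most $n$ since $F_{\geq -k}$ and each single-degree piece $F_{-k-1}$ are bounded below. The direct limit of these resolutions is then a projective resolution of $F$ whose term in homological degree $i$ is a coproduct $P^i\amalg(\amalg_{j<0}Q_j^i)$ of projectives, and which vanishes in degree $n+1$. You should replace your duality argument by this explicit limit-of-resolutions construction; your colimit decomposition is already the right starting point for it.
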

\begin{proof}
(a) We proved in Proposition \ref{prop:homological} that any bounded below
functor $F$ in $\Gr(\C)$ has projective dimension at most $n$. Let $F$
be any graded functor. Define $G_i=F_{\geq -i}$ for all $i\geq 0$.  We
obtain a directed system $G_0\hookrightarrow G_1\hookrightarrow
G_2\hookrightarrow \cdots$, and $\varinjlim G_i = F$.

We have exact sequences of functors
\[0\to G_i\to G_{i+1}\to F_{-i-1}\to 0,\]
so that the projective objects of a projective resolution of $G_{i+1}$
can be chosen as the direct sum of the projective objects occurring in
the projective resolution of $G_i$ and $F_{-i-1}$ and the morphism
between the resolutions for $G_i$ and $G_{i+1}$ would just be the
natural inclusion. Let $P^i$ and $Q^i_j$ be the projective objects
occurring at stage $i$ in the projective resolution of $G_0$ and of
$F_j$, respectively. If we take the direct limit of this system, we
obtain as the projective occurring at stage $i$ to be $P^i\amalg
(\amalg_{j<0} Q^i_j)$. Since $G_0$ and all $F_j$ are bounded below,
the projectives occurring at stage $n+1$ are zero, so that the $n$-th
syzygy in the induced resolution of $F$ is $P^n\amalg (\amalg_{j<0}
Q^n_j)$. Hence $F$ has projective dimension at most $n$.

(b) The follows directly from (a) and Proposition
\ref{prop:homological} (c) for $\Gr(\C)$.
\end{proof}

\section{Koszul categories}\label{section2}
This section is devoted to defining Koszul categories and showing some
elementary properties of them. In particular it is shown that a Koszul
category is generated in degrees $0$ and $1$, and that there is a
naturally associated Koszul dual.

First we define our Koszul categories.
\begin{defin}
Let $\C$ be a positively graded Krull-Schmidt locally finite
$K$-category.
\begin{enumerate}
\item[(a)] A functor $F$ in $\Gr(\C)$ is said to be \emph{linear} if
$F$ has a finitely generated projective graded resolution of the form
\[\cdots\to \Hom_\C(-,C_2)[-2]\to \Hom_\C(-,C_1)[-1]\to
\Hom_\C(-,C_0)\to F\to 0\] with $C_i$ in $\C$ for all $i\geq 0$ and
all morphisms have degree zero.
\item[(b)] The category $\C$ is said to be \emph{Koszul} if all simple
functors $S$ in $\Gr(\C)$ are linear.
\end{enumerate}
\end{defin}
Our notion of a Koszul category include the classical Koszul algebras
defined in \cite{P}. However the generalization in \cite{B} and the
further generalization in \cite{CS} of Koszul algebras are not
covered. 

Let $\C$ be a positively graded $K$-category. Recall that the category
$\C$ is said to be \emph{generated in degrees $0$ and $1$} if the
ideal $J=\amalg_{i\geq 1}\Hom_\C(-,-)_i\subseteq \Hom_\C(-,-)$
satisfies $J^r=\amalg_{i\geq r}\Hom_\C(-,-)_i$ for all $r\geq 1$.

Next we show that Koszul categories share the same property as Koszul
algebras being generated in degrees $0$ and $1$. 
\begin{lem}\label{lem:genin0and1}
Let $\C$ be a Koszul category. Then the following assertions are
true.
\begin{enumerate}
\item[(a)] $\rad_\C(-,-)=\Hom_\C(-,-)_{\geq 1}$.
\item[(b)] The category $\C$ is generated in degrees $0$ and $1$.
\end{enumerate}
\end{lem}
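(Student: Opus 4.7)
The plan is to derive (a) by a pair of inclusions that use, respectively, the Koszul resolution of a simple functor and the single-degree support of a simple graded functor; part (b) then follows by an induction on $r$ whose engine is the factorization supplied in the proof of (a). Throughout I work at an indecomposable $C$ and extend additively, since both $\rad_\C(-,-)$ and $\Hom_\C(-,-)_{\geq r}$ are additive sub-bifunctors.

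For $\rad_\C(-,C) \subseteq \Hom_\C(-,C)_{\geq 1}$: by Lemma \ref{lem:simples} the simple functor $S_C = \Hom_\C(-,C)/\rad_\C(-,C)$ has projective cover $\Hom_\C(-,C) \to S_C$, so $\rad_\C(-,C)$ is the image of the next differential $\mu \colon \Hom_\C(-,C_1)[-1] \to \Hom_\C(-,C)$ of the linear resolution. The shift $[-1]$ combined with the degree-zero requirement on $\mu$ forces, via Lemma \ref{lem:Yoneda}, that $\mu$ corresponds to a morphism $\mu_1 \in \Hom_\C(C_1,C)_1$. Consequently every element of $\rad_\C(X,C)$ is of the form $\mu_1 \circ h$ for some $h \in \Hom_\C(X,C_1)$, and thus has degree $\geq 1$.

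For $\Hom_\C(-,C)_{\geq 1} \subseteq \rad_\C(-,C)$: as noted just before Lemma \ref{lem:simples}, any simple object of $\Gr(\C)$ is supported in a single degree. The class of $1_C$ in $S_C(C) = \End_\C(C)/\rad\End_\C(C)$ is non-zero and lives in degree $0$, so the support of $S_C$ is exactly $\{0\}$. This forces $S_C(X)_i = 0$ for $i \geq 1$, i.e.\ $\Hom_\C(X,C)_i \subseteq \rad_\C(X,C)$ for $i \geq 1$, completing (a).

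For (b), set $J = \Hom_\C(-,-)_{\geq 1}$, which equals $\rad_\C$ by (a). The containment $J^r \subseteq \Hom_\C(-,-)_{\geq r}$ is immediate. I prove the reverse by induction on $r$; the case $r = 1$ is (a). For $r \geq 2$ and $f \in \Hom_\C(X,C)_i$ with $i \geq r$, part (a) gives $f \in \rad_\C(X,C)$, so the factorization from the second paragraph yields $f = \sum_j \mu_{1,j}\, h_j$ where each $\mu_{1,j}$ has degree $1$ and, by degree comparison, each $h_j$ may be chosen of degree $i - 1 \geq r - 1$. The induction hypothesis places each $h_j$ in $J^{r-1}$, so $f \in J \cdot J^{r-1} = J^r$. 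The main obstacle, and what makes the peeling argument work, is pinning down that the first differential of a linear resolution is homogeneous of degree exactly one; this is what is purchased by combining the shift $[-1]$ with the degree-zero requirement via graded Yoneda.
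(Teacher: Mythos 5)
Your proof is correct and follows essentially the same route as the paper: both inclusions in (a) come from, respectively, the shape $\Hom_\C(-,C_1)[-1]$ of the second term of the linear resolution and the single-degree support of the simple $S_C$, and (b) is the same induction on $r$ via the degree-one factorization of $\Hom_\C(-,C)_{\geq 1}$. Your write-up merely makes the graded Yoneda identification of the first differential with an element of $\Hom_\C(C_1,C)_1$ explicit, which the paper leaves implicit.
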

\begin{proof}
  (a) Since $\C$ is Koszul, the simple functors in $\Gr(\C)$ are
  finitely presented. By Lemma \ref{lem:simples} the simple functors
  are given as $S_C=\Hom_\C(-,C)/\rad_\C(-,C)$ for some indecomposable
  object $C$ in $\C$ up to shift. The start of the graded projective
  resolution of $S_C$ is then of the form
  \[0\to \rad_\C(-,C)\to \Hom_\C(-,C)\to S_C\to 0.\] The simple
  functors $S_C$ are only concentrated in one degree, so
  $\Hom_\C(-,C)_{\geq 1}$ is contained in $\rad_\C(-,C)$. The next
  projective in the graded projective resolution of $S_C$ being of the
  form $\Hom_\C(-,C_1)[-1]$, implies that $\rad_\C(-,C)$ is contained
  in $\Hom_\C(-,C)_{\geq 1}$. Hence $\rad_\C(-,C)=\Hom_\C(-,C)_{\geq
    1}$ for all indecomposable objects $C$ in $\C$. The claim follows
  from this.

(b) The epimorphism $\Hom_\C(-,C_1)[-1]\to \Hom_\C(-,C)_{\geq 1}$ for
any indecomposable object $C$ in $\C$ found in (a), implies that
$\Hom_\C(X,C)_n=\Hom_\C(C_1,C)_1 \Hom_\C(X,C_1)_{n-1}$ for all $n\geq
2$ and all objects $X$ in $\C$. The claim follows by induction from
this.
\end{proof}

Next we show that Koszul categories have many of the same properties
as those of Koszul algebras. Let $\C$ be a Koszul $K$-category, and
let $\S(\C)$ be the full additive subcategory generated by the simple 
objects in $\Gr(\C)$, which might be viewed as a subcategory of
$\Mod(\C)$. Recall that the $\Ext$-category (see Example
\ref{exam:extcat} in Section \ref{section1}) $E(\S(\C))$ has the same
objects as $\S(\C)$ and the morphisms are given by 
\[\Hom_{E(\S(\C))}(A,B)=\oplus_{i\geq 0}\Ext^i_{\Mod(\C)}(A,B).\]
Since $E(\S(\C))$ consists of objects in $\Mod(\C)$, for every object
$F$ in $\Mod(\C)$ we can consider $\Ext^i_{\Mod(\C)}(F,A)$ for any $i$
and for any object $A$ in $E(\S(\C))$. This gives rise to an 
analogue of the usual Koszul duality functor $\phi\colon \Gr(\C)\to
\Gr(E(\S(\C)))$ given by 
\[\phi(F)=\Ext^*_{\Mod(\C)}(F,-)=\oplus_{i\geq
  0}\Ext^i_{\Mod(\C)}(F,-).\] 
Indeed note that this is a contravariant functor. For related algebra
results see \cite[Proposition 5.5 (b)]{GM1} for (b), \cite[Theorem
5.2]{GM2} for (e), \cite[Theorem 2.10.2]{BGS} or \cite[Theorem
6.1]{GM1} for (f), \cite[Theorem 2.10.2]{BGS} or \cite[Theorem
2.3]{GM2} for (g).
\begin{thm}\label{thm:koszul}
Let $\C$ be a Koszul $K$-category. Then the following assertions are
true.
\begin{enumerate}
\item[(a)] If $F$ is linear, then $\Omega^1_{\Gr(\C)}(F)[1]$ is
  linear.
\item[(b)] If $F$ is linear, then $\rad_\C F[1]$ is linear. In
  particular $\rad_\C F$ is finitely generated.
\item[(c)] Let $0\to F_1\to F_2\to F_3\to 0$ be an exact sequence in
$\Gr(\C)_0$ of functors generated in degree zero. Then $\rad_\C F_2\cap
F_1=\rad_\C F_1$.
\item[(d)] If $0\to F_1\to F_2\to F_3\to 0$ is exact in $\Gr(\C)_0$
with $F_1$ and $F_2$ linear, then $F_3$ is linear.
\item[(e)] Let $\phi\colon \Gr(\C)\to \Gr(E(\S(\C))^\op)$ be given by
  $\phi(F)=\Ext^*_{\Mod(\C)}(F,-)$, where $\S(\C)$ is the full
  additive subcategory generated by the simple functors in
  $\Gr(\C)$. The functor $\phi$ restricts to a functor from the linear
  functors in $\Gr(\C)$ to the linear functors in
  $\Gr(E(\S(\C))^\op)$.
\item[(f)] The graded $K$-category $\C'=E(\S(\C))^\op$ is Koszul.
\item[(g)] The graded $K$-categories $\C$ and $E(\S(\C'))^\op$
are equivalent.
\end{enumerate}
\end{thm}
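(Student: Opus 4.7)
The plan is to prove the seven parts in the order $(a),(c),(d),(b),(e),(f),(g)$, since my proof of $(b)$ invokes $(d)$. For $(a)$, truncate the linear resolution of $F$ after the projective cover $P_0\to F$ to get a resolution of $\Omega^1_{\Gr(\C)}(F)$, and shift by $[1]$: the shifted projectives $\Hom_\C(-,C_{i+1})[-i]$ then yield a linear resolution of $\Omega^1_{\Gr(\C)}(F)[1]$. For $(c)$, I would note that by Lemma \ref{lem:genin0and1}(a) one has $\rad_\C F=F_{\geq 1}$ for any $F$ in $\Gr(\C)_0$ generated in degree $0$, because the image of $\rad_\C P=P_{\geq 1}$ under a projective cover $P\to F$ is exactly $F_{\geq 1}$; then $\rad_\C F_2\cap F_1=(F_2)_{\geq 1}\cap F_1=(F_1)_{\geq 1}=\rad_\C F_1$, using that $F_1\hookrightarrow F_2$ has degree $0$.

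For $(d)$, by $(c)$ the induced top sequence
\[0\to F_1/\rad_\C F_1\to F_2/\rad_\C F_2\to F_3/\rad_\C F_3\to 0\]
is an exact sequence of semisimple functors and hence splits. Consequently the projective cover of $F_2$ is $P_0^{F_1}\amalg P_0^{F_3}$, and the horseshoe construction yields an exact sequence of first syzygies; shifted by $[1]$, the first two are linear by $(a)$ and the third is their quotient, so all three are generated in degree $0$. Iterating the horseshoe with $(c)$ at every stage shows $\Omega^k(F_3)[k]$ is generated in degree $0$ for every $k$, so $F_3$ is linear. For $(b)$, consider the short exact sequence
\[0\to \Omega^1(F)\to \rad_\C P_0\to \rad_\C F\to 0,\]
where $P_0$ is the projective cover of $F$ and $\Omega^1(F)\subseteq \rad_\C P_0$ because $F$ is generated in degree $0$. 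After shifting by $[1]$, the outer terms are linear: $\Omega^1(F)[1]$ by $(a)$, and $\rad_\C P_0[1]$ because $\rad_\C P_0=\Omega^1(S_{C_0})$ for the linear simple top $S_{C_0}$ of $P_0$ (Koszul hypothesis together with $(a)$). Then $(d)$ yields that $\rad_\C F[1]$ is linear, and finite generation is immediate.

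For $(e)$, first compute $\phi$ on simples: for an indecomposable $A$ in $\C$ and $S_B\in \obj \C'$,
\[\phi(S_A)(S_B)=\Ext^*_{\Mod(\C)}(S_A,S_B)=\Hom_{E(\S(\C))}(S_A,S_B)=\Hom_{\C'}(S_B,S_A),\]
so $\phi(S_A)\simeq \Hom_{\C'}(-,S_A)$ is representable, hence projective, in $\Gr(\C')$. The key technical sub-claim is that an $F$ in $\Gr(\C)$ is linear if and only if each $\Ext^i_{\Mod(\C)}(F,S)$ is concentrated in the appropriate internal degree for every simple $S$; this follows by induction on homological degree using $(a)$ and $(b)$ on successive syzygies. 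Given this, for a linear $F$ with semisimple tops $T_i=P_i/\rad_\C P_i$, the complex $\cdots\to \phi(T_1)[-1]\to \phi(T_0)\to \phi(F)\to 0$, with maps induced by the differentials of the linear resolution of $F$, is the required linear resolution of $\phi(F)$. For $(f)$, the simple in $\Gr(\C')$ at $S_A$ is the top of $\phi(S_A)=\Hom_{\C'}(-,S_A)$; apply $(e)$ to the linear resolution of $S_A\in \Gr(\C)$ (linear by Koszulness of $\C$) to produce a linear resolution of this simple over $\Gr(\C')$. For $(g)$, indecomposable objects of $E(\S(\C'))^\op$ biject with those of $\C$ via the chain of correspondences passing through simples in $\Gr(\C)$ and simples in $\Gr(\C')$, and the morphism spaces match by applying the linearity characterization from $(e)$ twice, recovering the internal grading of $\Hom_\C(A,B)$ as the $\Ext$-degree.

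The most technically involved step will be $(e)$, on which $(f)$ and $(g)$ depend. Establishing the equivalence between linearity and concentration of $\Ext$ in the correct internal degree, and verifying exactness of the proposed resolution of $\phi(F)$, requires careful bookkeeping of the two gradings (homological and internal) and chaining the syzygy arguments of $(a)$, $(b)$, $(d)$ through all homological degrees simultaneously; this is where the Koszul hypothesis on $\C$ must be exploited in full.
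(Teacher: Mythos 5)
Your parts (a)--(d) follow essentially the paper's route (the same reduction of (d) to the exactness of the sequence of tops coming from (c), and the same application of (d) to $0\to\Omega^1(F)\to\rad_\C P_0\to\rad_\C F\to 0$ to obtain (b)), and they are fine. The genuine gap is in (e): the complex you propose, with $i$-th term $\phi(T_i)[-i]$ where $T_i=P_i/\rad_\C P_i$ is the top of the $i$-th projective in the linear resolution of $F$, is not a projective resolution of $\phi(F)$. Test it on $F=\Hom_\C(-,C)$ for $C$ indecomposable: here $P_0=F$ and $P_i=0$ for $i\geq 1$, so your complex reduces to $0\to\phi(S_C)\to\phi(\Hom_\C(-,C))\to 0$. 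But $\phi(\Hom_\C(-,C))\simeq\widehat{S}_C$ is the \emph{simple} object of $\Gr(E(\S(\C))^\op)$ supported at $S_C$, while $\phi(S_C)=\Ext^*_{\Mod(\C)}(S_C,-)$ is the representable, hence projective, object there -- its projective cover, and the map is an isomorphism only when $\rad_\C(-,C)=(0)$. The point is that the tops of the resolution of $F$ do not control the minimal resolution of $\phi(F)$; the radical layers $\rad_\C^iF/\rad_\C^{i+1}F$ of $F$ itself do. The paper's construction is: from the exact sequences $0\to\Omega^i(F)\to\Omega^i(F/\rad_\C F)\to\Omega^{i-1}(\rad_\C F)\to 0$ one extracts $0\to\Ext^*_{\Mod(\C)}(\rad_\C F,-)[-1]\to\Ext^*_{\Mod(\C)}(F/\rad_\C F,-)\to\Ext^*_{\Mod(\C)}(F,-)\to 0$, whose middle term is projective; since $\rad_\C F[1]$ is linear by (b), induction along the radical filtration yields the linear resolution of $\phi(F)$ with $i$-th term $\phi(\rad_\C^iF/\rad_\C^{i+1}F)[-i]$. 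Your ``key sub-claim'' (linearity detected by concentration of $\Ext^i(F,S)$ in internal degree $i$) is true but does not repair the wrong choice of terms.

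The error propagates to (f) and (g). In (f) you apply (e) to the simple $S_A$; but $\phi(S_A)$ is projective in $\Gr(E(\S(\C))^\op)$, so its linearity is vacuous and says nothing about the simples of the dual category. What is needed (and what the paper does) is to apply (e) to the linear functor $\Hom_\C(-,C)$, whose image under $\phi$ \emph{is} the simple $\widehat{S}_C$ of $E(\S(\C))^\op$. In (g), the identification $\Ext^i(\widehat{S}_C,\widehat{S}_D)\simeq\Hom_\C(D,C)_i$ is read off from the minimal resolution of $\widehat{S}_C$ with terms $\Ext^*(\rad_\C^i(-,C)/\rad_\C^{i+1}(-,C),-)[-i]$, i.e.\ precisely the radical-layer resolution that your construction misses.
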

\begin{proof}
The claim in (a) is clear from the definition of linear functors. The
statement in (b) follows in a similar way as for algebras using that
if $0\to F_1\to F_2\to F_3\to 0$ is an exact sequence in $\Gr(\C)$
with degree zero homomorphisms and $F_1$ and $F_2$ linear, then $F_3$
is also linear.

(c) Let $0\to F_1\to F_2\to F_3\to 0$ be an exact sequence in
$\Gr(\C)_0$ of functors generated in degree zero. This exact sequence
induces the following commutative exact diagram
\[\xymatrix{
 & 0\ar[d] & 0\ar[d] & 0\ar[d] & \\
0\ar[r] & F_1\cap \rad_\C F_2\ar[r]\ar[d] & \rad_\C F_2\ar[r]\ar[d] & \rad_\C
F_3 \ar[r]\ar[d] & 0\\
0\ar[r] & F_1 \ar[r] & F_2\ar[r] & F_3\ar[r] & 0}\]
Since $F_2$ is generated in degree zero, $\rad_\C F_2=(F_2)_{\geq 1}$ and
$\rad_\C F_2(X)_0=(0)$ for all $X$ in $\C$. Consequently $(F_1\cap \rad_\C
F_2)(X)_0=(0)$ for all $X$. Similarly, $\rad_\C F_1=(F_1)_{\geq 1}$ and
$\rad_\C F_1(X)_i=F_1(X)_i$ for $i\geq 1$ and $\rad_\C F_1(X)_0=(0)$ for all
$X$. It follows that $F_1\cap \rad_\C F_2\subseteq (F_1)_{\geq 1}=\rad_\C
F_1$.  But in general $\rad_\C F_1\subseteq F_1\cap \rad_\C F_2$, hence
$\rad_\C F_1=F_1\cap \rad_\C F_2$.

(d) Let $0\to F_1\to F_2\to F_3\to 0$ be exact in $\Gr(\C)_0$ with
$F_1$ and $F_2$
linear. By (b) we infer that $0\to F_1/\rad_\C F_1\to F_2/\rad_\C F_2\to
F_3/\rad_\C F_3\to 0$ is exact, and therefore $0\to \Omega(F_1)\to
\Omega(F_2) \to \Omega(F_3)\to 0$ is exact. Hence $\Omega(F_3)$ is
generated in degree $0$. By induction we infer that $F_3$ is linear.

(e) Given a linear functor $F$ in $\Gr(\C)$ there are exact sequences
\[0\to \Omega^i(F)\to \Omega^i(F/\rad_\C F)\to \Omega^{i-1}\rad_\C F\to 0\]
for all $i\geq 1$. Similar arguments as for modules then show that
there is an exact sequence
\[0\to \Ext^*_{\Mod(\C)}(\rad_\C F,-)[-1]\to\Ext^*_{\Mod(\C)}(F/\rad_\C F,-)\to
\Ext^*_{\Mod(\C)}(F,-)\to 0,\]
where $F/\rad_\C F$ is a finite direct sum of simple functors.  Since
$\rad_\C F$ is shift of a linear functor, it follows by induction that
$\phi(F)$ is a linear functor in $\Gr(E(\S(\C))^\op)$.

(f) The category $\C'=E(\S(\C))^\op$ is a positively graded
Krull-Schmidt category, since $\C'$ is an additive $K$-category
generated by the simple objects in $\Gr(\C)$ and each simple object
has a graded local endomorphism ring. It is locally finite, since all
functors in $\S(\C)$ are linear. The simple functors in $\Gr(\C')$ are
given by $\widehat{S}_C=\Ext^*(S_C,-)/\rad\Ext^*(S_C,-)=\Hom(S_C,-)$
for some indecomposable object $C$ in $\C$. It is easy to see that
$\phi(\Hom_\C(-,C))\simeq \widehat{S}_C$ for all indecomposable
objects $C$ in $\C$. By (e) we infer that $\C'$ is a Koszul
$K$-category.

(g) Let $\C'=E(\S(\C))^\op$. Define $H\colon \C\to E(\S(\C'))^\op$
first on objects by letting for $C=\amalg_{i=1}^n C_i$ with $C_i$
indecomposable in $\C$ for $i=1,2,\ldots,n$
\[H(C) = \amalg_{i=1}^n\widehat{S}_{C_i}.\]
A morphism $f\colon D\to C$ between two indecomposable objects in
degree $i$, that is, $f$ is in $\rad_\C^i(D,C)/\rad_\C^{i+1}(D,C)$, gives
rise to a morphism of functors $\Hom_\C(-,D)\to
\rad_\C^i(-,C)/\rad_\C^{i+1}(-,C)$. Applying the functor $\phi$ to this
morphism induces a morphism $\Ext^*(\rad_\C^i(-,C)/\rad_\C^{i+1}(-,C),-)\to
\widehat{S}_D$. This we can interpret as an element $H(f)$ in
$\Ext^i(\widehat{S}_C,\widehat{S}_D)$. This is linearly extended to
any morphism in $\C$. This defines $H$ on morphisms in $\C$, keeping in
mind that our target category is $E(\S(\C'))^\op$. It is clear that
$H$ takes an identity morphism to an identity morphism.

Using the proof of (a) and (e), the simple
functors $\widehat{S}_C=\Hom(S_C,-)$ in $\Gr(\C')$ have a minimal
projective resolution given by
\begin{multline}
\cdots\to \Ext^*(\rad_\C^2(-,C)/\rad_\C^3(-,C),-)[-2] \to\\
\Ext^*(\rad_\C(-,C)/\rad_\C^2(-,C),-)[-1]\to\\
\Ext^*(\Hom(-,C)/\rad_\C(-,C),-)\to \widehat{S}_C\to 0\notag
\end{multline}
We infer from this that
\begin{align}
\Ext^i(\widehat{S}_C,\widehat{S}_D) & \simeq
\Hom(\Ext^*(\rad_\C^i(-,C)/\rad_\C^{i+1}(-,C),-)[-i],\widehat{S}_D)\notag\\
& \simeq
\Hom(\Hom(\rad_\C^i(-,C)/\rad_\C^{i+1}(-,C),-)[-i],\widehat{S}_D)\notag\\ 
& \simeq \widehat{S}_D(\rad_\C^i(-,C)/\rad_\C^{i+1}(-,C)[-i])\notag\\
& = \Hom(S_D,\rad_\C^i(-,C)/\rad_\C^{i+1}(-,C)[-i])\notag\\
& = \Hom(\Hom_\C(-,D)/\rad_\C
(-,D),\rad_\C^i(-,C)/\rad_\C^{i+1}(-,C)[-i])\notag\\ 
& \simeq \Hom(\Hom_\C(-,D),\rad_\C^i(-,C)/\rad_\C^{i+1}(-,C)[-i])\notag\\
& \simeq \rad_\C^i(D,C)/\rad_\C^{i+1}(D,C)[-i]\notag\\
& = \Hom_\C(D,C)_i\notag
\end{align}
Tracing through all these isomorphisms one can show that this is the
morphism defining the functor $H$ on morphisms in $\C$. Hence $H$
is full and faithful.

Finally we need to show that $H$ commutes with composition of
maps. Let $f$ be in $\rad_\C^i(D,C)/\rad_\C^{i+1}(D,C)$, and let $g$ be in
$\rad_\C^i(C,D')/\rad_\C^{i+1}(C,D')$. We want to show that $H(gf)$ is the
Yoneda product of $H(f)$ and $H(g)$. In order to do so, we need to
lift the morphism $\Ext^*(\rad_\C^j(-,C')/\rad_\C^{j+1}(-,C'),-)\to
\Ext^*((-,C)/\rad_\C(-,C),-)$ induced by $g$, through a chain map to a
morphism
\[\Ext^*(\rad_\C^{i+j}(-,C')/\rad_\C^{i+j+1}(-,C'),-)\to
\Ext^*(\rad_\C^i(-,C)/\rad_\C^{i+1}(-,C),-).\]
It is easily seen that this morphism also is induced by $g$, so that
it follows that the Yoneda product of $H(f)$ and $H(g)$ is given by
$H(gf)$. This shows that $H\colon \C\to E(\S(\C'))^\op$ is an
equivalence of graded $K$-categories.
\end{proof}

In view of this result, for a Koszul $K$-category $\C$ we call the
category $E(\S(\C))$ the \emph{Koszul dual of $\C$}.

The prime example and the main application of the theory is related to
the category of all additive functors from $(\mod\L)^\op$ to vector
spaces for a finite dimensional $K$-algebra $\L$. This application
will be presented in a forthcoming paper. As a further motivation for
this paper and in particular for the next section we include a brief
discussion of this application.

Let $\L$ be a finite dimensional algebra over a field $K$. Denote by
$\mod\L$ the category of finitely generated left $\L$-modules, and by
$\Mod(\mod\L)$ the category of all additive functors $F\colon
(\mod\L)^\op\to \Mod K$.
As we saw in Example \ref{exam:assgraded} we can consider the
associated graded category $\A_\gr(\mod\L)$ of $\Mod(\mod\L)$. It has
the same objects as $\mod\L$, while the morphisms are given by
\[\Hom_{\A_\gr(\mod\L)}(X,Y)=\amalg_{i\geq
  0}\rad_\C^i(X,Y)/\rad_\C^{i+1}(X,Y).\]
The simple functors in $\Mod(\mod\L)$ are of the form
$S_C=(-,C)/\rad_\C(-,C)$ for some indecomposable $C$ in $\mod\L$. They
have a minimal projective resolution given by
\[0\to \Hom_\L(-,\tau C)\to \Hom_\L(-,E)\to \Hom_\L(-,C)\to S_C\to 0\]
when $C$ is non-projective with corresponding almost split sequence
$0\to \tau C\to E\to C\to 0$, and given by
\[0\to \Hom_\L(-,\rrad P)\to \Hom_\L(-,P)\to S_P\to 0\]
when $P$ is an indecomposable projective $\L$-module with radical
$\rrad P$.

Recalling the construction in subsection \ref{subsec:ideals} we have a
functor $G\colon \Mod(\mod\L)\to \Gr(\A_\gr(\mod\L))$ on objects and
morphisms given for a functor $F$ in $\Mod(\mod\L)$ by
\[G(F)=\amalg_{i\geq 0} \rad^iF/\rad^{i+1}F.\]
The simple functors in $\Gr(\A_\gr(\mod\L))$ are all of the form
$G(S_C)$ for some $C$ in $\mod\L$, and they have a minimal projective
resolutions given by
\begin{multline}
0\to \amalg_{i\geq 0}\rad^i(-,\tau C)/\rad^{i+1}(-,\tau C)[-2]\to\\
\amalg_{i\geq 0}\rad^i(-,E)/\rad^{i+1}(-,E)[-1]\to \\
\amalg_{i\geq 0}\rad^i(-,C)/\rad^{i+1}(-,C)\to
G(S_C)\to 0\notag
\end{multline}
in case $C$ is non-projective, and given by
\begin{multline}
0\to \amalg_{i\geq 0}\rad^i(-,\rrad P)/\rad^{i+1}(-,\rrad
P)[-1]\to \\
\amalg_{i\geq 0}\rad^i(-,P)/\rad^{i+1}(-,P)\to
G(S_P)\to 0\notag
\end{multline}
when $P$ is projective by \cite{IT}. This shows that
$\Gr(\A_\gr(\mod\L))$ is a Koszul $K$-category. This illustrate the
content of the next section. There we define weakly Koszul
$K$-categories $\C$, which $\Mod(\mod\L)$ is an example of. We
study the relationship with $\A_\gr(\C)$ in general, and show that
$\A_\gr(\C)$ always is a Koszul $K$-category.

The application is related to the representation theory of $\L$ and in
particular to the Auslander-Reiten theory. The indecomposable modules
in $\mod\L$ is a disjoint union $\cup_{\sigma\in \Sigma}C_\sigma$,
where each $C_\sigma$ is an component of the Auslander-Reiten quiver
of $\L$. Furthermore, for $X$ and $Y$ in $\add C_\sigma$ and $\add
C_{\sigma'}$ respectively with $\sigma\neq \sigma'$, we have
\[\Hom_{\A_\gr(\mod\L)}(X,Y)=\amalg_{i\geq 0}\rad^i(X,Y)/\rad^{i+1}(X,Y)=(0)\]
since any morphism from $X$ to $Y$ is in the infinite radical.  Hence
$\A_\gr(\mod\L)$ is a disjoint union
$\cup_{\sigma\in\Sigma}\A_\gr(\add\C_\sigma)$ of categories.  As a
consequence the category $\Gr(\A_\gr(\mod\L))$ is the product
$\prod_{\sigma\in\Sigma} \Gr(\A_\gr(\add\C_\sigma))$, which all are
Koszul $K$-categories. We show in a forthcoming paper that properties
of these categories reflect properties of the component $\C_\sigma$.

\section{Weakly-Koszul categories}\label{section3}
Here we introduce the non-graded analogue of Koszul categories similar
as was done for algebras in \cite{MVZ}. The primary example for us of
a weakly Koszul category is the category of additive functors from
$(\mod\L)^\op$ to vector spaces for a finite dimensional $K$-algebra
$\L$.

Let $\C$ be an additive Krull-Schmidt $K$-category. This in particular
implies that $\End_\C(C)$ is semiperfect for all objects in $C$ in
$\C$. Consequently the category of finitely presented functors
$\mod\C$ in $\Mod(\C)$ has minimal projective presentations (compare
Lemma \ref{lem:minpresent}). Also, similarly as in Lemma
\ref{lem:simples} this gives rise to one-to-one correspondence between
the indecomposable objects in $\C$ and the simple objects in $\Mod(\C)$,
where an indecomposable object $C$ in $\C$ gives rise to the simple
object $S_C=\Hom_\C(-,C)/\rad_\C(-,C)$ in $\Mod(\C)$.

We need a further finiteness condition to define weakly Koszul
$K$-categories. A $K$-category $\C$ is called \emph{locally radical
finite} if $\rad_\C^i(A,B)/\rad_\C^{i+1}(A,B)$ is finite dimensional over
$K$ for all pairs of objects $(A,B)$ in $\C$ and all $i\geq 0$.
Throughout this section let $\C$ denote an additive Krull-Schmidt
locally radical finite $K$-category. First we define a weakly Koszul
$K$-category.
\begin{defin}
\begin{enumerate}
\item[(i)] A functor $F$ in $\Mod(\C)$ is \emph{weakly Koszul} if $F$
has a projective resolution
\[\cdots\to P_n\to P_{n-1}\to\cdots\to P_1\to P_0\to F\to 0\]
where $P_i$ is a finitely generated projective object in $\Mod(\C)$ for
all $i\geq 0$ and $\rad_\C^{i+1}(P_j)\cap
\Omega^{j+1}(F)=\rad_\C^i(\Omega^{j+1}(F))$ for all $j\geq 0$ and
$i\geq 1$.
\item[(ii)] A category $\C$ is \emph{weakly Koszul}, if $\C$ is an
additive Krull-Schmidt locally radical finite $K$-category and every
simple functor in $\Mod(\C)$ is weakly Koszul.
\end{enumerate}
\end{defin}
Note that if a functor $F$ is weakly Koszul, then $\Omega^i(F)$ is
weakly Koszul for all $i\geq 0$.

For algebras the associated graded algebra, with respect to the 
Jacobson radical, of a weakly Koszul algebra is Koszul. We have the
same for our weakly Koszul categories, as we show next. First recall
from Example \ref{exam:assgraded} and from subsection
\ref{subsec:ideals} how we from an additive $K$-category $\C$
constructed the associated positively graded $K$-category $\A_\gr(\C)$
and a functor $G\colon\Mod(\C)\to \Gr(\A_\gr(\C))$ on objects and
morphisms, where for $F$ in $\Mod(\C)$ the functor $G$ is given by
$G(F)=\amalg_{i\geq 0}\rad_\C^iF/\rad_\C^{i+1}F$.
\begin{prop}\label{prop:weaklytoKoszul}
Suppose that $\C$ is an additive Krull-Schmidt locally radical finite
$K$-category.  Let $F$ in $\Mod(\C)$ be weakly Koszul. Then $G(F)$ in
$\Gr(A_\gr(\C))$ is Koszul. Moreover, if $\C$ is weakly Koszul, then
$\A_\gr(\C)$ is Koszul.
\end{prop}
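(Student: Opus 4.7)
The plan is to take a minimal projective resolution of $F$ and pass to its associated graded to obtain a linear resolution of $G(F)$ in $\Gr(\A_\gr(\C))$. Fix such a resolution $\cdots \to P_1 \to P_0 \to F \to 0$ with $P_j = \amalg_k \Hom_\C(-, C_{j,k})$, satisfying the defining weak-Koszul identity. A direct computation from the definition of $\rad_\C F$ gives $\rad_\C^i \Hom_\C(-, C) = \rad_\C^i(-, C)$, whence $G(P_j) \simeq \amalg_k \Hom_{\A_\gr(\C)}(-, C_{j,k})$ is a graded projective in $\Gr(\A_\gr(\C))$. I then set $Q_j := G(P_j)[-j]$, so that $Q_j$ is generated in degree $j$, which is precisely the format required by the definition of linear functor.

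Next I need the differentials. Since the resolution is minimal we have $d_j(P_j) \subseteq \rad_\C P_{j-1}$; combining this with naturality of $d_j$ one checks $d_j(\rad_\C^i P_j) \subseteq \rad_\C^{i+1} P_{j-1}$, so $d_j$ induces a degree-$0$ morphism $\bar d_j \colon Q_j \to Q_{j-1}$ and we obtain a complex
\[
\cdots \to Q_1 \to Q_0 \to G(F) \to 0.
\]
The heart of the argument, and the main obstacle, is proving exactness at each $Q_j$. Given a cycle represented by $x \in \rad_\C^{n-j}(P_j)$ with $d_j(x) \in \rad_\C^{n-j+2}(P_{j-1})$, I will use $\Im d_j = \Omega^j(F)$ together with the weak-Koszul identity
\[
\rad_\C^{n-j+2}(P_{j-1}) \cap \Omega^j(F) = \rad_\C^{n-j+1}(\Omega^j(F))
\]
to rewrite $d_j(x) = d_j(x')$ for some $x' \in \rad_\C^{n-j+1}(P_j)$. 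The difference $x - x'$ then lies in $\Ker d_j = \Omega^{j+1}(F)$, and a second application of weak Koszulity (with index $j+1$) places it inside $\rad_\C^{n-j-1}(\Omega^{j+1}(F))$; lifting through $d_{j+1}$ exhibits a preimage $y \in \rad_\C^{n-j-1}(P_{j+1})$ whose class in $(Q_{j+1})_n$ maps to $[x]$. The delicate point is to identify the abstractly-defined radical $\rad_\C^k(\Im d_{j+1})$ of the subfunctor $\Omega^{j+1}(F)$ with the concrete image $d_{j+1}(\rad_\C^k P_{j+1})$, but this equality follows from naturality of $d_{j+1}$ and its surjectivity onto $\Omega^{j+1}(F)$.

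For the moreover: if $\C$ is weakly Koszul, every simple $S_C = \Hom_\C(-, C)/\rad_\C(-, C)$ in $\Mod(\C)$ is weakly Koszul, so the first part produces a linear resolution of $G(S_C)$ in $\Gr(\A_\gr(\C))$. Because $S_C$ is simple we have $\rad_\C S_C = 0$, so $G(S_C)$ is concentrated in degree zero, and the analogue of Lemma \ref{lem:simples} for $\A_\gr(\C)$ identifies it with the simple functor $\Hom_{\A_\gr(\C)}(-, C)/\rad_{\A_\gr(\C)}(-, C)$. Every simple in $\Gr(\A_\gr(\C))$ is a shift of such a $G(S_C)$, and shifting a linear resolution produces a linear resolution; therefore all simples in $\Gr(\A_\gr(\C))$ are linear, meaning $\A_\gr(\C)$ is Koszul.
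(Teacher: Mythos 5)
Your proof is correct and follows essentially the same route as the paper: both push the minimal projective resolution of $F$ through $G$, using the surjectivity of $d$ on radical layers (Lemma \ref{lem:idealepi}) together with the weakly Koszul identity to obtain a linear resolution of $G(F)$ --- the paper merely organizes the exactness argument as spliced short exact sequences $0\to\rad_\C^{i-1}\Omega(F)/\rad_\C^i\Omega(F)\to \rad_\C^i(-,C_0)/\rad_\C^{i+1}(-,C_0)\to\rad_\C^iF/\rad_\C^{i+1}F\to 0$ plus induction on syzygies, rather than as your direct degree-by-degree chase. The only omission is the one-line verification (which the paper does record) that $\A_\gr(\C)$ is a positively graded, locally finite, Krull-Schmidt $K$-category, as the definition of a Koszul category requires.
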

\begin{proof}
  The category $\A_\gr(\C)$ is clearly positively graded and locally
  finite, and it is Krull-Schmidt as the associated graded ring of a
  local ring is graded local.

Let \[\cdots\to\Hom_\C(-,C_2)\to\Hom_\C(-,C_1)\to
\Hom_\C(-,C_0)\extto{d} F\to 0\]
be a minimal projective resolution of $F$. By Lemma \ref{lem:idealepi}
(b) the map
\[d|_{\rad_\C^i \Hom_\C(-,C_0)}\colon \rad_\C^i \Hom_\C(-,C_0)\to
\rad_\C^i F\]
is onto for all $i\geq 0$. The kernel of $d|_{\rad_\C^i
  \Hom_\C(-,C_0)}$ is $(\rad_\C^i \Hom_\C(-,C_0))\cap \Omega(F)$, which
is equal to $\rad_\C^{i-1}\Omega(F)$, since $F$ is weakly Koszul. This
gives rise to the exact sequences
\[0\to\rad_\C^i\Omega(F)\to \rad_\C^{i+1}\Hom_\C(-,C_0)\to \rad_\C^{i+1}F\to
0\]
for all $i\geq 0$. Consequently there are exact sequences
\begin{multline}
0\to\rad_\C^{i-1}\Omega(F)/\rad_\C^i\Omega(F)\to
\rad^i_\C(-,C_0)/\rad^{i+1}_\C(-,C_0) \to\\
\rad_\C^iF/\rad_\C^{i+1}F\to 0\notag
\end{multline}
for all $i\geq 1$. Combining all these sequences we obtain the exact
sequence
\[0\to G(\Omega(F))[1]\to G(\Hom_\C(-,C_0))\to G(F)\to 0\]
of functors. By induction we have an exact sequence
\[\cdots\to G(\Hom_\C(-,C_2))[2]\to G(\Hom_\C(-,C_0))[1]\to G(F)\to
0\]
We infer that $G(F)$ is a linear $\A_\gr(\C)$-module.

Suppose that $\C$ is weakly Koszul. Since any simple
$\A_\gr(\C)$-module is of the form $G(S_C)$, it follows that
$\A_\gr(\C)$ is Koszul.
\end{proof}

In the following result and its corollary we show that weakly Koszul
modules are closed under cokernels of monomorphisms and that the
radical of a weakly Koszul modules again is weakly Koszul.

\begin{prop}\label{prop:closedundermono}
Suppose that $\C$ is weakly Koszul. Let $0\to F_1\to F_2\to F_3\to 0$
is an exact sequence in $\mod\C$. Assume that $(\rad_\C^k
F_2)\cap F_1=\rad_\C^k F_1$ for all $k\geq 0$ and that $F_1$ and $F_2$
are weakly Koszul. Then $F_3$ is weakly Koszul.
\end{prop}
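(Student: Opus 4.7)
The natural strategy is to pass to the associated graded category $\A_\gr(\C)$ from Example \ref{exam:assgraded} and exploit Theorem \ref{thm:koszul}(d). Since $\C$ is weakly Koszul, Proposition \ref{prop:weaklytoKoszul} tells us that $\A_\gr(\C)$ is Koszul and that both $G(F_1)$ and $G(F_2)$ are linear functors in $\Gr(\A_\gr(\C))$.

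The first step is to show that $G$ carries the given short exact sequence to an exact sequence
\[0 \to G(F_1) \to G(F_2) \to G(F_3) \to 0\]
in $\Gr(\A_\gr(\C))$. By Lemma \ref{lem:idealepi}(b) (applied iteratively to $\I=\rad_\C$), the epimorphism $F_2\to F_3$ restricts to an epimorphism $\rad_\C^k F_2\to \rad_\C^k F_3$ for every $k\geq 0$, and by the strictness hypothesis its kernel is $\rad_\C^k F_2\cap F_1=\rad_\C^k F_1$. Hence $0\to\rad_\C^k F_1\to\rad_\C^k F_2\to\rad_\C^k F_3\to 0$ is exact for all $k$, and applying the snake lemma to the inclusion of the level $k+1$ sequence into the level $k$ sequence extracts exactness of the $k$-th graded piece
\[0\to \rad_\C^k F_1/\rad_\C^{k+1}F_1\to \rad_\C^k F_2/\rad_\C^{k+1}F_2\to \rad_\C^k F_3/\rad_\C^{k+1}F_3\to 0.\]
Summing over $k\geq 0$ gives the desired sequence. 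Since $\A_\gr(\C)$ is Koszul and $G(F_1), G(F_2)$ are linear, Theorem \ref{thm:koszul}(d) yields that $G(F_3)$ is also linear.

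The remaining task is to convert the linearity of $G(F_3)$ back into the statement that $F_3$ is weakly Koszul, which I would do by induction on homological degree. Since $\C$ is Krull-Schmidt, $F_3$ has a projective cover $P_0\to F_3$ with kernel $\Omega_\C(F_3)$, and the horseshoe construction applied to projective covers produces an exact syzygy sequence
\[0\to \Omega_\C(F_1)\to \Omega_\C(F_2)\to \Omega_\C(F_3)\to 0.\]
Linearity of $G(F_3)$, namely that its graded syzygy is generated in degree one, translates via the surjection $G(P_0)\to G(F_3)$ into the base case identity $\rad_\C^{i+1}P_0\cap \Omega_\C(F_3)=\rad_\C^i\Omega_\C(F_3)$ for every $i\geq 1$, which is exactly the $j=0$ clause of the weakly Koszul definition for $F_3$. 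For the induction step I would verify that the syzygy sequence inherits the strictness hypothesis, i.e.\ $(\rad_\C^k \Omega_\C(F_2))\cap \Omega_\C(F_1)=\rad_\C^k \Omega_\C(F_1)$ for all $k$, using the original hypothesis together with the base case identities already known for $F_1$ and $F_2$; the argument then closes up by replacing $(F_1,F_2,F_3)$ with $(\Omega_\C(F_1),\Omega_\C(F_2),\Omega_\C(F_3))$ and iterating.

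The main obstacle is precisely this lifting step, which amounts to a converse of Proposition \ref{prop:weaklytoKoszul}: one must identify the graded syzygy $\Omega_{\A_\gr(\C)}(G(F_3))$ with a shift of $G(\Omega_\C(F_3))$ and then propagate the strictness condition from one pair of syzygies to the next, so that the induction actually closes. The extraction of the weakly Koszul equality from the generation-in-degree-one condition is a careful tracking of the filtration $\{\rad_\C^k P_0\cap\Omega_\C(F_3)\}_k$ against $\{\rad_\C^k\Omega_\C(F_3)\}_k$, and the propagation of strictness to syzygies is where all three hypotheses (on $F_1$, $F_2$, and $F_1\subseteq F_2$) must be combined.
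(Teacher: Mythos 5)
Your reduction to the associated graded category contains a genuine gap at exactly the point you flag as ``the main obstacle'', and that obstacle is not surmountable by the filtration-tracking you describe. The first half is fine: the strictness hypothesis together with Lemma \ref{lem:idealepi} (b) gives exactness of $0\to\rad_\C^kF_1\to\rad_\C^kF_2\to\rad_\C^kF_3\to 0$ for all $k$, hence exactness of $0\to G(F_1)\to G(F_2)\to G(F_3)\to 0$, and Theorem \ref{thm:koszul} (d) then makes $G(F_3)$ linear. The problem is the converse of Proposition \ref{prop:weaklytoKoszul} that you need next. If $P_0\to F_3$ is a projective cover, the kernel of $G(P_0)\to G(F_3)$ in degree $i$ is $(\rad_\C^iP_0\cap\Omega(F_3))/(\rad_\C^{i+1}P_0\cap\Omega(F_3))$, i.e.\ the associated graded of $\Omega(F_3)$ with respect to the filtration induced from $P_0$ --- not with respect to its own radical filtration; the two coincide precisely when the weak Koszul identity you are trying to prove already holds. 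Generation of this kernel in degree $1$ therefore only yields
\[\rad_\C^{i+1}P_0\cap\Omega(F_3)=\rad_\C^{i}\Omega(F_3)+\bigl(\rad_\C^{i+2}P_0\cap\Omega(F_3)\bigr)\]
for all $i\geq 1$, and iterating leaves an error term $\rad_\C^{N}P_0\cap\Omega(F_3)$ for arbitrarily large $N$. Removing it requires the radical filtration of $P_0$ to be separated, which is not available in this generality: in the motivating example $\C=\Mod(\mod\L)$ the infinite radical of $\mod\L$ is typically nonzero, so $\bigcap_N\rad_\C^N\Hom_\C(-,C)\neq(0)$.

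The paper never passes to $\A_\gr(\C)$; it runs the induction entirely at the ungraded, filtered level. The strictness hypothesis at $k=1$ gives exactness of $0\to F_1/\rad_\C F_1\to F_2/\rad_\C F_2\to F_3/\rad_\C F_3\to 0$, the horseshoe construction on projective covers gives $0\to\Omega(F_1)\to\Omega(F_2)\to\Omega(F_3)\to 0$ with $\Hom_\C(-,C_2)\simeq\Hom_\C(-,C_1)\amalg\Hom_\C(-,C_3)$, and the chain of equalities
\[(\rad_\C^i\Omega(F_2))\cap\Omega(F_1)=\rad_\C^{i+1}(-,C_2)\cap\Omega(F_2)\cap\Omega(F_1)=\rad_\C^{i+1}(-,C_1)\cap\Omega(F_1)=\rad_\C^i\Omega(F_1)\]
(using weak Koszulity of $F_2$, then that $\Hom_\C(-,C_1)$ is a direct summand of $\Hom_\C(-,C_2)$ containing $\Omega(F_1)$, then weak Koszulity of $F_1$) propagates the strictness hypothesis to the syzygy sequence. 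A Snake Lemma argument on the resulting three-by-three diagram then forces $\rad_\C^i\Omega(F_3)=\rad_\C^{i+1}(-,C_3)\cap\Omega(F_3)$, which is simultaneously the $j=0$ clause of weak Koszulity for $F_3$ and the input needed to iterate. To salvage your route you would have to either prove the converse of Proposition \ref{prop:weaklytoKoszul} under an additional separatedness hypothesis on the radical filtration, or carry the strictness data along at the filtered level, which is precisely what the paper's direct argument does.
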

\begin{proof}
By assumption the sequence
\[0\to F_1/\rad_\C F_1\to F_2/\rad_\C F_2\to F_3/\rad_\C F_3\to 0\]
is exact. Then we have an exact commutative diagram
\[\xymatrix{%
        & 0\ar[d] & 0\ar[d] & 0\ar[d] & \\
0\ar[r] & \Omega(F_1)\ar[r]\ar[d] & \Omega(F_2)\ar[r]\ar[d] &
\Omega(F_3) \ar[r]\ar[d] & 0 \\
0\ar[r] & \Hom_\C(-,C_1)\ar[r]\ar[d] & \Hom_\C(-,C_2)\ar[r]\ar[d] &
\Hom_\C(-,C_3)\ar[r]\ar[d] & 0\\
0\ar[r] & F_1\ar[r]\ar[d] & F_2\ar[r]\ar[d] & F_3 \ar[r]\ar[d] & 0\\
        & 0                       & 0                       & 0 & }\]
with $\Hom_\C(-,C_i)/\rad_\C(-,C_i)\simeq F_i/\rad F_i$ for
$i=1,2,3$. Using the assumptions we have
\begin{align}
(\rad_\C^i\Omega(F_2))\cap \Omega(F_1)
& = (\rad^{i+1}_\C(-,C_2)) \cap \Omega(F_2)\cap \Omega(F_1)\notag\\
& = \rad^{i+1}_\C(-,C_2)\cap \Hom_\C(-,C_1)\cap \Omega(F_1)\notag\\
& =\rad^{i+1}_\C(-,C_1)\cap \Omega(F_1)=\rad_\C^i\Omega(F_1)\notag
\end{align}
for all $i\geq 1$. Hence we have a commutative diagram
\[\xymatrix@C=10pt{%
        & 0\ar[d] & 0\ar[d] & 0\ar[d] & \\
0\ar[r] & \rad_\C^i\Omega(F_1)\ar[r]\ar[d] & \rad_\C^i\Omega(F_2)\ar[r]\ar[d] &
\rad_\C^i\Omega(F_3) \ar[r]\ar[d] & 0 \\
0\ar[r] & \rad^{i+1}_\C(-,C_1)\ar[r]\ar[d] &
\rad^{i+1}_\C(-,C_2)\ar[r]\ar[d]  &
\rad^{i+1}_\C(-,C_3)\ar[r]\ar[d] & 0\\
0\ar[r] & \rad_\C^{i+1}F_1\ar[r]\ar[d] & \rad_\C^{i+1} F_2\ar[r]\ar[d] &
\rad_\C^{i+1}F_3 \ar[r]\ar[d] & 0\\
        & 0                       & 0                       & 0 & }\]
Since $\rad_\C^i\Omega(F_3)\subseteq \rad_\C^{i+1}\Hom_\C(-,C_3)\cap
\Omega(F_3)$, the last column is a complex and the remaining columns
and all the rows are exact. It follows by the Snake Lemma, that the
rightmost column also is exact. Therefore
\[\rad_\C^i\Omega(F_3)=\rad^{i+1}_\C(-,C_3)\cap \Omega(F_3).\]
By induction $\Omega(F_3)$ is weakly Koszul. Then $F_3$ is weakly
Koszul, and this completes the proof.
\end{proof}

\begin{cor}\label{cor:radweaklykoszul}
Let $\C$ be a weakly Koszul $K$-category. If $F$ is weakly Koszul,
then $\rad_\C F$ is weakly Koszul. In particular $\rad_\C F$ is finitely
generated.
\end{cor}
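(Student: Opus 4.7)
The plan is to reduce the claim to Proposition \ref{prop:closedundermono} applied to a short exact sequence built from the projective cover of $F$.

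Let $P_0 \to F$ be a projective cover, with $P_0 = \amalg_{i=1}^n \Hom_\C(-,C_i)$ and $C_i$ indecomposable in $\C$ (finitely many since $F$ is weakly Koszul and hence finitely generated). The kernel $\Omega F$ is contained in $\rad_\C P_0$ by the essentiality of the cover, so Lemma \ref{lem:idealepi}(b) gives an epimorphism $\rad_\C P_0 \to \rad_\C F$ whose kernel is $\rad_\C P_0 \cap \Omega F = \Omega F$. This yields the short exact sequence
\[
0 \to \Omega F \to \rad_\C P_0 \to \rad_\C F \to 0
\]
in $\mod\C$. The strategy is to verify the hypotheses of Proposition \ref{prop:closedundermono} with $F_1 = \Omega F$, $F_2 = \rad_\C P_0$, $F_3 = \rad_\C F$.

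First I would check that $F_1$ and $F_2$ are weakly Koszul. For $F_1$: the syzygy of a weakly Koszul functor is weakly Koszul (immediate from the definition, as noted right after it). For $F_2$: since $\rad_\C P_0 = \amalg_{i=1}^n \rad_\C(-,C_i) = \amalg_{i=1}^n \Omega S_{C_i}$ and each simple $S_{C_i}$ is weakly Koszul by the assumption that $\C$ is weakly Koszul, each $\Omega S_{C_i}$ is weakly Koszul, and a finite direct sum of weakly Koszul functors is weakly Koszul (take the direct sum of the resolutions and verify the intersection condition summandwise).

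Next I would verify the intersection condition $(\rad_\C^k \rad_\C P_0) \cap \Omega F = \rad_\C^k \Omega F$ for all $k \geq 0$. For $k=0$ this is just $\rad_\C P_0 \cap \Omega F = \Omega F$, which already holds. For $k \geq 1$, one has $\rad_\C^k \rad_\C P_0 = \rad_\C^{k+1} P_0$, and
\[
\rad_\C^{k+1} P_0 \cap \Omega F = \rad_\C^{k} \Omega F
\]
is exactly the weakly Koszul condition for $F$ with $j=0$ and $i=k \geq 1$. With the three hypotheses in place, Proposition \ref{prop:closedundermono} yields that $\rad_\C F$ is weakly Koszul.

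Finally, the definition of a weakly Koszul functor requires a resolution by finitely generated projectives, so in particular $\rad_\C F$ is finitely generated, giving the second assertion. I do not anticipate a real obstacle here; the only subtlety is recognizing that the correct sequence to feed into Proposition \ref{prop:closedundermono} is the one obtained from the projective cover by restricting to radicals, and that the intersection hypothesis of that proposition then translates verbatim into the $j=0$ instance of the weak Koszul condition for $F$.
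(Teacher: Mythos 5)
Your proposal is correct and follows the same route as the paper: the paper's proof is precisely to restrict the projective cover sequence $0\to\Omega(F)\to \Hom_\C(-,C)\to F\to 0$ to radicals and feed the resulting sequence $0\to\Omega(F)\to\rad_\C(-,C)\to\rad_\C F\to 0$ into Proposition \ref{prop:closedundermono}. You have merely spelled out the verification of that proposition's hypotheses (which the paper leaves implicit), and your checks --- syzygies and finite direct sums of weakly Koszul functors are weakly Koszul, and the intersection condition is the $j=0$ case of the weak Koszul condition for $F$ --- are all accurate.
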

\begin{proof}
The sequence, $0\to \Omega(F)\to \Hom_\C(-,C)\to F\to 0$ induces an
exact sequence $0\to \Omega(F)\to \rad_\C(-,C)\to \rad_\C F\to 0$,
satisfying the conditions of Proposition
\ref{prop:closedundermono}. Therefore $\rad_\C F$ is weakly Koszul.
\end{proof}

To further illuminate the relationship between linear objects and
weakly Koszul objects for Koszul categories, we show that a weakly
Koszul object generated in degree zero in a Koszul category is a
linear object. This is true even more general. Recall that an object
$F$ in $\Mod(\C)$ is \emph{quasi-Koszul} if $F$ has a finitely
generated projective resolution 
\[\cdots \to \Hom_\C(-,C_i)\to \cdots \to \Hom_\C(-,C_0)\to F\to 0\]
and that
$\rad_\C\Omega^i(F)=\rad^2_\C(-,C_{i-1})\cap \Omega^i(F)$ for all
$i\geq 0$. 

\begin{lem}
Let $\C$ be a Koszul algebra, and let $F$ be in $\Gr(\C)$.
Assume that $F$ is weakly Koszul (or weaker, quasi-Koszul)
and generated in degree zero. Then $F$ is linear.
\end{lem}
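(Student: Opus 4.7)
The plan is to prove by induction on $i\geq 0$ that $\Omega^i(F)$ is generated in degree $i$, and that the minimal graded projective resolution of $F$ can therefore be chosen with $P_i = \Hom_\C(-,C_i)[-i]$ and degree zero differentials, which is exactly the condition defining linearity. Since weakly Koszul implies quasi-Koszul, it suffices to work under the (weaker) quasi-Koszul hypothesis. The first observation I would record is the key identity supplied by Lemma \ref{lem:genin0and1}: because $\C$ is Koszul, $\rad_\C(-,-) = \Hom_\C(-,-)_{\geq 1}$ and $\C$ is generated in degrees $0$ and $1$, from which $\rad_\C^n(-,-) = \Hom_\C(-,-)_{\geq n}$ for every $n\geq 1$. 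This translates radical powers into degree truncations.

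For the base case, since $F$ is generated in degree zero, Lemma \ref{lem:posgradprojcover} gives a graded projective cover of the form $P_0 = \Hom_\C(-,C_0) \twoheadrightarrow F$ with $C_0$ in $\C$ and the surjection of degree zero. For the inductive step, assume that $\Omega^i(F)$ is generated in degree $i$, so that its projective cover may be chosen as $P_i = \Hom_\C(-,C_i)[-i]$. Then
\[
\Omega^{i+1}(F) \subseteq \rad_\C P_i = \Hom_\C(-,C_i)_{\geq 1}[-i],
\]
which is supported in degrees $\geq i+1$, while $\rad_\C^2 P_i = \Hom_\C(-,C_i)_{\geq 2}[-i]$ is supported in degrees $\geq i+2$. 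The quasi-Koszul identity $\rad_\C \Omega^{i+1}(F) = \rad_\C^2 P_i \cap \Omega^{i+1}(F)$ therefore reads
\[
\rad_\C \Omega^{i+1}(F) = \Omega^{i+1}(F)_{\geq i+2},
\]
so $\Omega^{i+1}(F)/\rad_\C \Omega^{i+1}(F) \cong \Omega^{i+1}(F)_{i+1}$ is concentrated in degree $i+1$. Applying Lemma \ref{lem:posgradprojcover}(b) to the bounded below functor $\Omega^{i+1}(F)$ then yields a projective cover of the required form $P_{i+1} = \Hom_\C(-,C_{i+1})[-(i+1)]$, completing the induction.

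It remains only to note that the differentials $P_{i+1} \to P_i$ of the assembled resolution are of degree zero, since each is obtained by composing a degree zero projective cover of $\Omega^{i+1}(F)$ with the inclusion $\Omega^{i+1}(F) \hookrightarrow P_i$. This delivers a finitely generated graded projective resolution of $F$ of the shape demanded in the definition of a linear functor.

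The delicate point to watch is the graded interpretation of the radical of a graded functor: one needs that for a bounded below finitely generated $G$ in $\Gr(\C)$ whose generators sit in degree $m$, the equality $\rad_\C G = G_{\geq m+1}$ holds. The Koszul hypothesis on $\C$, via $\rad_\C(-,-) = \Hom_\C(-,-)_{\geq 1}$, is precisely what makes this identification work and is what allows the quasi-Koszul condition to propagate into the degree-shifted form required for linearity.
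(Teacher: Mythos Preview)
Your proof is correct and follows essentially the same approach as the paper's: both argue by induction, using the quasi-Koszul identity $\rad_\C\Omega^{i+1}(F)=\rad_\C^2 P_i\cap\Omega^{i+1}(F)$ together with the Koszul identification $\rad_\C^n(-,-)=\Hom_\C(-,-)_{\geq n}$ from Lemma~\ref{lem:genin0and1} to conclude that $\Omega^{i+1}(F)/\rad_\C\Omega^{i+1}(F)$ is concentrated in a single degree. The only cosmetic difference is that the paper reaches this conclusion via the embedding $\Omega(F)/\rad_\C\Omega(F)\hookrightarrow \rad_\C(-,C)/\rad_\C^2(-,C)$ displayed in a diagram, whereas you compute directly with degree truncations; the content is the same.
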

\begin{proof}
Since $F$ is weakly Koszul (or quasi-Koszul), there is an exact
sequence
\[0\to \Omega(F)\to \Hom_\C(-,C)\to F\to 0\]
with $\rad_\C\Omega(F)=\rad^2_\C(-,C)\cap \Omega(F)$. By induction it 
is enough to prove that $\Omega(F)$ is generated in degree $1$.

By the above observation we have the following exact commutative
diagram
\[\xymatrix@R=15pt@C=15pt{
0\ar[r] & \Omega(F)\ar[r]\ar[d] & \rad_\C(-,C)
\ar[r]\ar[d] & \rad_\C F\ar[r]\ar[d] & 0\\
0\ar[r] & \Omega(F)/\rad_\C\Omega(F)\ar[r]\ar[d] &
\rad_\C(-,C)/\rad_\C^2(-,C)\ar[r]\ar[d] & \rad_\C F/\rad_\C^2
  F\ar[r]\ar[d] & 0\\
& 0 & 0 & 0 & }\]
Hence $\Omega(F)/\rad_\C\Omega(F)$ is generated in degree $1$. It
follows that $\Omega(F)$ has a projective cover generated in degree
$1$. By induction $F$ is linear.
\end{proof}

The next result indicates that for a weakly Koszul $K$-category $\C$
there is in addition to the naturally associated Koszul category
$\A_\gr(\C)$, a second associated category and possibly a Koszul
category, namely $E(\S(\C))$. We shall later see that these categories
are Koszul dual of each other.
\begin{prop}
Let $\C$ be a weakly Koszul $K$-category. Then the functor $\phi\colon
\Mod(\C)\to \Gr(E(\S(\C))^\op)$ given by $\phi(F)=\Ext^*_{\Mod(\C)}(F,-)$
restricts to a functor from the category weakly Koszul modules
$\wK(\C)$ to the category of linear functors in $\Gr(E(\S(\C))^\op)$.
\end{prop}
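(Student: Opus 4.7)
The overall strategy is to reduce the statement to the Koszul (graded) case already established in Theorem \ref{thm:koszul}(e), by passing through the associated graded functor $G\colon \Mod(\C)\to \Gr(\A_\gr(\C))$ of Example \ref{exam:assgraded}.

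First, I would observe that both hypotheses propagate to $\A_\gr(\C)$ and $G(F)$: Proposition \ref{prop:weaklytoKoszul} gives that $\A_\gr(\C)$ is a Koszul $K$-category (since $\C$ is weakly Koszul) and that $G(F)$ is linear in $\Gr(\A_\gr(\C))$ (since $F$ is weakly Koszul). Applying Theorem \ref{thm:koszul}(e) to the Koszul category $\A_\gr(\C)$ and the linear object $G(F)$, the graded Koszul dual functor
\[\phi_{\gr}(G(F))=\Ext^*_{\Gr(\A_\gr(\C))}(G(F),-)\]
is a linear object of $\Gr(E(\S(\A_\gr(\C)))^\op)$.

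The second step is to identify $E(\S(\C))$ with $E(\S(\A_\gr(\C)))$ as graded $K$-categories, in such a way that $\phi(F)$ corresponds to $\phi_{\gr}(G(F))$. Objects correspond by $S_C\leftrightarrow G(S_C)$, both being indexed by indecomposables of $\C$. The essential computation is that if $H$ is weakly Koszul in $\Mod(\C)$ and $S$ is any simple, then for every $i\geq 0$ there is a natural isomorphism
\[\Ext^i_{\Mod(\C)}(H,S)\simeq \Ext^i_{\Gr(\A_\gr(\C))}(G(H),G(S));\]
indeed, by the proof of Proposition \ref{prop:weaklytoKoszul} the functor $G$ carries a minimal projective resolution of $H$ to a minimal graded projective resolution of $G(H)$, and since the target is simple both Ext groups reduce to $\Hom$ of the $i$-th term into the simple (minimality kills the differentials). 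Since $\C$ is weakly Koszul, its simple functors are themselves weakly Koszul, so applying this identification to $H=S_D$ realises the claimed isomorphism of graded categories $E(\S(\C))\simeq E(\S(\A_\gr(\C)))$. Naturality of the minimal-resolution construction gives compatibility with Yoneda composition, and applying the same identification to $H=F$ transfers the linear resolution of $\phi_{\gr}(G(F))$ to one for $\phi(F)$, so $\phi(F)$ is a linear object of $\Gr(E(\S(\C))^\op)$.

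The main obstacle is making the identification $E(\S(\C))\simeq E(\S(\A_\gr(\C)))$ fully precise and verifying that it respects the Yoneda product, so that the linear structure transfers on the nose from one side to the other. This is the only technical point that is not immediate from results already proved in the paper: once it is in place, the conclusion follows by combining Proposition \ref{prop:weaklytoKoszul} with Theorem \ref{thm:koszul}(e).
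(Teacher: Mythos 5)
Your proposal is correct in outline, but it takes a genuinely different route from the paper. The paper proves the statement directly: starting from a projective cover $\Hom_\C(-,C)\to F$ it shows $\rad_\C^i\Omega(F)=\rad_\C^i\Omega(F/\rad_\C F)\cap\Omega(F)$, deduces short exact sequences $0\to\Omega^i(F)\to\Omega^i(F/\rad_\C F)\to\Omega^{i-1}(\rad_\C F)\to 0$, applies $\Hom(-,S_D)$ to get the exact sequence $0\to\Ext^*_{\Mod(\C)}(\rad_\C F,-)[-1]\to\Ext^*_{\Mod(\C)}(F/\rad_\C F,-)\to\Ext^*_{\Mod(\C)}(F,-)\to 0$, and then splices these by induction using Corollary \ref{cor:radweaklykoszul} (that $\rad_\C F$ is again weakly Koszul and finitely generated) to produce an explicit linear resolution of $\phi(F)$ by the projectives $\Ext^*_{\Mod(\C)}(\rad_\C^iF/\rad_\C^{i+1}F,-)[-i]$. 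You instead reduce to the graded case via $G$, invoking Proposition \ref{prop:weaklytoKoszul} and Theorem \ref{thm:koszul}(e), and then transfer back along an identification $E(\S(\C))\simeq E(\S(\A_\gr(\C)))$ together with $\phi(F)\simeq\Ext^*_{\Mod\A_\gr(\C)}(G(F),-)\comp G$. That transfer is precisely the content of the proposition that immediately follows this one in the paper, and its proof there does not rely on the present statement, so your route is not circular; your sketch of it (minimal resolutions are preserved by $G$, and $\Ext$ into a simple reduces to $\Hom$ from the corresponding term of a minimal resolution) matches the paper's later argument. The trade-off is that your approach front-loads the Koszul-duality comparison between $\C$ and $\A_\gr(\C)$ --- including the verification that the isomorphisms respect Yoneda products, which you correctly flag as the real technical burden --- whereas the paper's direct induction needs only Corollary \ref{cor:radweaklykoszul} and yields the linear resolution of $\phi(F)$ explicitly; on the other hand, your argument makes the conceptual picture (weakly Koszul objects become linear after passing to the associated graded category, and Koszul duality commutes with this passage) more transparent and essentially delivers the subsequent proposition for free.
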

\begin{proof}
It is clear that $\phi$ gives rise to a functor from $\Mod(\C)$ to
$\Gr(E(\S(\C))^\op)$. Next we show that $\Ext^*_{\Mod(\C)}(F,-)$ is a
linear $E(\S(\C))^\op$-module when $F$ is a weakly Koszul $\C$-module.

Let $F$ be weakly Koszul. Let $\Hom_\C(-,C)\to F$ be a projective
cover. This gives rise to the commutative exact diagram
\[\xymatrix@R=10pt@C=15pt{%
   & & 0\ar[d] & 0\ar[d] & & \\
& 0\ar[r] & \Omega(F)\ar[r]\ar[d] & \Omega(F/\rad_\C F)\ar[r]\ar[d] &
\rad_\C F\ar[r] & 0\\
& & \Hom_\C(-,C)\ar@{=}[r]\ar[d] & \Hom_\C(-,C)\ar[d] & & \\
0\ar[r] & \rad_\C F\ar[r] & F\ar[r]\ar[d] & F/\rad_\C F\ar[r]\ar[d] & 0 & \\
   & & 0 & 0 & & }\]
where we have
\begin{align}
\rad_\C^i\Omega(F) & = \rad^{i+1}_\C(-,C)\cap \Omega(F)\notag\\
& = \rad^{i+1}_\C(-,C)\cap\Omega(F/\rad_\C F)\cap \Omega(F)\notag\\
& = \rad_\C^i\Omega(F/\rad_\C F)\cap \Omega(F)\notag
\end{align}
for all $i\geq 1$. It follows that there exist exact sequences
\[0\to \Omega^i(F)\to \Omega^i(F/\rad_\C F)\to \Omega^{i-1}(\rad_\C F)\to
0\] such that $\rad_\C^j\Omega^i(F/\rad_\C F)\cap
\Omega^i(F)=\rad_\C^j\Omega^i(F)$ for all $j\geq 0$. This in turn gives
the exact sequences
\[0\to \Hom(\Omega^{i-1}(\rad_\C F),S_D)\to \Hom(\Omega^i(F/\rad_\C
F),S_D) \to \Hom(\Omega^i(F),S_D)\to 0\]
for each simple functor $S_D$ in $\Mod(\C)$. Since
$\Hom_{\Mod(\C)}(\Omega^j(G),S_D)\simeq \Ext^j_{\Mod(\C)}(G,S_D)$, these
sequences induce the exact sequence
\[0\to \Ext^*_{\Mod(\C)}(\rad_\C F,-)[-1] \to \Ext^*_{\Mod(\C)}(F/\rad_\C
F,-)\to \Ext^*_{\Mod(\C)}(F,-)\to 0\]
of functors. By Corollary \ref{cor:radweaklykoszul} $\rad_\C F$ is weakly
Koszul, so that by induction there exists a long exact sequence
\begin{multline}
\cdots\to \Ext^*_{\Mod(\C)}(\rad_\C^2 F/\rad_\C^3 F,-)[-2] \to
\Ext^*_{\Mod(\C)}(\rad_\C F/\rad_\C^2 F,-)[-1]\to\\
\Ext^*_{\Mod(\C)}(F/\rad_\C F,-)\to \Ext^*_{\Mod(\C)}(F,-)\to 0\notag
\end{multline}
of functors. Note that by Corollary \ref{cor:radweaklykoszul} the
projectives occurring in this resolution are finitely generated.  In
particular $\phi(F)=\Ext^*_{\Mod(\C)}(F,-)$ is a linear module.
\end{proof}

The next result shows that for a weakly Koszul $K$-category $\C$ the
two naturally associated categories $\A_\gr(\C)$ and $E(\S(\C))^\op$
are Koszul duals of each other.
\begin{prop}
Let $\C$ be a weakly Koszul $K$-category, and let $G\colon \Mod(\C)\to
\Gr(\A_\gr(\C))$ be given as before by $G(F)=\amalg_{i\geq 0} \rad_\C^i
F/\rad_\C^{i+1}F$.
\begin{enumerate}
\item[(a)] For $F$ in $\wK(\C))$, then
\[\Ext^*_{\Mod(\C)}(F,-)\simeq
\Ext^*_{\Mod\A_\gr(\C)}(G(F),-)\comp G\]
as objects in $\Gr(E(\S(\C))^\op)$.
\item[(b)] The functor $G$ induces an equivalence of the categories
$E(\S(\C))$ and $E(\S(\A_\gr(\C)))$.
\end{enumerate}
\end{prop}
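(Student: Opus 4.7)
The essential tool is the observation (established in the proof of Proposition \ref{prop:weaklytoKoszul}) that applying $G$ to a minimal projective resolution
\[\cdots \to \Hom_\C(-,C_1) \to \Hom_\C(-,C_0) \to F \to 0\]
of a weakly Koszul functor $F$ in $\Mod(\C)$ yields a (minimal) linear projective resolution
\[\cdots \to \Hom_{\A_\gr(\C)}(-,C_1)[-1] \to \Hom_{\A_\gr(\C)}(-,C_0) \to G(F) \to 0\]
of $G(F)$ in $\Gr(\A_\gr(\C))$. The plan is to compute both $\Ext$-functors term by term from these paired resolutions, and then to deduce (b) from (a).

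For (a), evaluating $\Ext^*_{\Mod(\C)}(F,S_D)$ via the minimal resolution: all differentials factor through $\rad_\C$, so they vanish after applying $\Hom_{\Mod(\C)}(-,S_D)$, yielding
\[\Ext^i_{\Mod(\C)}(F,S_D) \simeq \Hom_{\Mod(\C)}(\Hom_\C(-,C_i),S_D) \simeq S_D(C_i).\]
The graded Yoneda Lemma \ref{lem:Yoneda}, together with the analogous vanishing of the differentials in the linear resolution (whose entries lie in the graded radical $\amalg_{j\geq 1}\Hom_{\A_\gr(\C)}(-,-)_j$), yields
\[\Ext^i_{\Gr(\A_\gr(\C))}(G(F),G(S_D)) \simeq G(S_D)(C_i)[i].\]
Since $G(S_D)$ is concentrated in degree zero and coincides with the simple functor $\widehat{S}_D$ in $\Gr(\A_\gr(\C))$, both expressions have the same underlying vector space $S_D(C_i)$. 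Naturality in the second argument is inherited from evaluation at $C_i$, which $G$ respects by construction. This gives the required isomorphism in $\Gr(E(\S(\C))^\op)$.

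For (b), the functor $G$ sends the simple functor $S_C$ in $\Mod(\C)$ to $\widehat{S}_C=G(S_C)$ in $\Gr(\A_\gr(\C))$, which is a bijection between isomorphism classes of simples. Applying (a) with $F=S_C$ (which is weakly Koszul since $\C$ is) yields graded isomorphisms
\[\Ext^*_{\Mod(\C)}(S_C,S_D) \simeq \Ext^*_{\Gr(\A_\gr(\C))}(\widehat{S}_C,\widehat{S}_D),\]
so $G$ induces a fully faithful, essentially surjective functor $E(\S(\C))\to E(\S(\A_\gr(\C)))$, provided the isomorphisms respect Yoneda composition.

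The principal obstacle is precisely this compatibility of (a) with Yoneda products; that is, given $\alpha\in\Ext^i(S_A,S_B)$ and $\beta\in\Ext^j(S_B,S_C)$, one must show $G(\beta)\comp G(\alpha)=G(\beta\comp\alpha)$. The strategy is to represent $\alpha$ and $\beta$ by chain lifts through the minimal projective resolutions in $\Mod(\C)$, apply $G$ to obtain chain lifts through the matching linear resolutions in $\Gr(\A_\gr(\C))$ (here minimality and the compatibility of $G$ with the radical filtration are crucial), and compare the resulting composites degree by degree. This is the Koszul-category analogue of the classical identification of the Ext-algebra of a filtered algebra with that of its associated graded ring; once carried out, part (b) is immediate from (a).
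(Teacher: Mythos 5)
Your proposal is correct and follows essentially the same route as the paper: both rest on the fact (from Proposition \ref{prop:weaklytoKoszul}) that $G$ carries a minimal projective resolution of a weakly Koszul $F$ to a minimal linear resolution of $G(F)$, both read off the vector-space isomorphisms $\Ext^i_{\Mod(\C)}(F,S_D)\simeq\Ext^i_{\Mod\A_\gr(\C)}(G(F),G(S_D))$ from minimality, and both reduce the remaining content to checking that these isomorphisms commute with Yoneda products (which the paper verifies as the naturality of the isomorphism in (a), since morphisms in $E(\S(\C))$ are themselves Ext classes, and which you correctly flag as the principal obstacle and handle by the same chain-lifting argument). The only cosmetic difference is that you defer this compatibility check to part (b), whereas the paper places it inside the proof of (a) where it is in fact already needed for the stated isomorphism to live in $\Gr(E(\S(\C))^\op)$.
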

\begin{proof}
(a) Let $F$ be in $\wK(\C)$, and let
\[\cdots\to\Hom_C(-,C_i)\to \Hom_\C(-,C_{i-1})\to \cdots 
\to \Hom_\C(-,C_0) \to F\to 0\]
be a minimal projective resolution of $F$. By Proposition
\ref{prop:weaklytoKoszul}
\begin{multline}
\cdots\to G(\Hom_C(-,C_i))\to G(\Hom_\C(-,C_{i-1}))\to \cdots\notag\\ 
\to G(\Hom_\C(-,C_0)) \to G(F)\to 0\notag
\end{multline}
is a minimal projective resolution of $G(F)$. It follows from this
that $G(\Omega^i(F))\simeq \Omega^i(G(F))$.

For any pair of objects $C$ and $X$ in $\C$ with $C$ indecomposable we
have that $\Hom_{\Mod(\C)}(\Hom_\C(-,X),S_C) \simeq
\Hom_{\Mod\A_\gr(\C)}(G(\Hom_\C(-,X)),G(S_C))$, where the isomorphism
is induced by $G$. It follows directly from this that for
$F$ in $\wK(\C)$ we have an isomorphism of vector spaces
\[\Ext^i_{\Mod(\C)}(F,S_C)\simeq \Ext^i_{\Mod\A_\gr(\C)}(G(F),G(S_C))\]
for any indecomposable object $C$ in $\C$, where the isomorphism is
induced by $G$.

Finally we need to see that the vector space isomorphism is a morphism
of functors. Let $\theta\colon S_C\to S_D$ be a homogeneous morphism
in $E(\S(\C))$, that is, let $\theta$ be some element in
$\Ext^i_{\Mod(\C)}(S_C,S_D)$ for some $i\geq 0$. Then
\[\Ext^*_{\Mod(\C)}(F,-)(\theta) \colon \Ext^*_{\Mod(\C)}(F,S_C)\to
\Ext^*_{\Mod(\C)}(F,S_D)\]
is given by the Yoneda product with $\theta$. And
\begin{multline}
\Ext^*_{\Mod\A_\gr(\C)}(G(F),-)\comp G(\theta)\colon
\Ext^*_{\Mod\A_\gr(\C)}(G(F),G(S_C))\to \notag\\
\Ext^*_{\Mod\A_\gr(\C)}(G(F),G(S_D))\notag
\end{multline}
is given by the Yoneda product with $G(\theta)$ in
$\Ext^i_{\Mod\A_\gr(\C)}(G(S_C),G(S_D))$. It is easy to see that
vector space isomorphism commutes with these operations, so that
$\Ext^*_{\Mod(\C)}(F,-)$ and
$\Ext^*_{\Mod\A_\gr(\C)}(G(F),-)\comp G$ are isomorphic as objects
in $\Gr(E(\S(\C))^\op)$.

(b) Since $\C$ is weakly Koszul, all simple functors are in
$\wK(\C)$. It then follows from (a) that $E(\S(\C))$ and
$E(\S(\A_\gr(\C)))$ are equivalent categories, where the equivalence
is induced by $G$.
\end{proof}

The above results can be summarized as having a commutative diagram
\[\xymatrix{%
\wK(\C) \ar[rr]^G \ar[dr]^\phi & &
\Li(\A_\gr(\C))\ar[dl]^{\phi'} \\
& \Li(E(\S(\C))^\op) & }\]
where $\Li(\D)$ denotes the full subcategory consisting of the linear
objects for a Koszul category $\D$, the functor
$\phi(F)=\Ext^*_{\Mod(\C)}(F,-)$ and the functor
$\phi'(F')=\Ext^*_{\Mod\A_\gr(\C)}(F',-)$. When $\C$ is weakly
Koszul, then $\A_\gr(\C)$ is Koszul by Proposition
\ref{prop:weaklytoKoszul} and $E(\S(\C))$ is Koszul by Theorem
\ref{thm:koszul}. Furthermore, the functor $\phi'$ is a duality and
$\Gr(E(\S(E(\S(\C))^\op))^\op)$ is equivalent to $\Gr(\A_\gr(\C))$. In other
words, we have the following. 

\begin{prop}
  Let $\C$ be a weakly Koszul $K$-category. Then the double Koszul
  dual $E(\S(E(\S(\C))^\op))^\op$ of the weakly Koszul $K$-category
  $\C$ is equivalent to the associated graded $K$-category
  $\A_\gr(\C)$.
\end{prop}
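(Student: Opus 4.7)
The plan is to chain together the two results that immediately precede this proposition with Theorem \ref{thm:koszul}(g). Since $\C$ is weakly Koszul, Proposition \ref{prop:weaklytoKoszul} yields that the associated graded category $\A_\gr(\C)$ is a Koszul $K$-category. Thus Theorem \ref{thm:koszul}(g), applied with $\A_\gr(\C)$ in place of $\C$, gives an equivalence of graded $K$-categories
\[\A_\gr(\C)\ \simeq\ E(\S(E(\S(\A_\gr(\C)))^\op))^\op.\]
So the problem reduces to replacing the inner $\A_\gr(\C)$ by $\C$ inside the double $E(\S(-))^\op$ construction.

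This is exactly what the immediately preceding proposition provides: part (b) states that the functor $G\colon \Mod(\C)\to \Gr(\A_\gr(\C))$ induces an equivalence $E(\S(\C))\simeq E(\S(\A_\gr(\C)))$ of graded $K$-categories. Passing to opposite categories gives $E(\S(\C))^\op\simeq E(\S(\A_\gr(\C)))^\op$. I would next observe that the assignment $\D\mapsto E(\S(\D))^\op$ sends equivalent graded $K$-categories to equivalent graded $K$-categories: an equivalence $\D_1\to \D_2$ sends simple objects to simple objects (under the correspondence of Lemma \ref{lem:simples}), hence induces an equivalence $\S(\D_1)\to \S(\D_2)$; this in turn induces a graded equivalence of the $\Ext$-categories, since $\Ext^i$ is preserved by equivalences of abelian categories. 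Taking opposites preserves equivalences, so we obtain
\[E(\S(E(\S(\C))^\op))^\op\ \simeq\ E(\S(E(\S(\A_\gr(\C)))^\op))^\op.\]

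Combining the two displayed equivalences yields the desired equivalence $E(\S(E(\S(\C))^\op))^\op\simeq \A_\gr(\C)$.

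The only genuine point to verify carefully is the functoriality/naturality step: that applying the Koszul-dual construction $E(\S(-))^\op$ to the equivalence $E(\S(\C))\simeq E(\S(\A_\gr(\C)))$ really yields an equivalence on the next level. This is routine but not entirely automatic, because $\S(\D)$ is defined inside $\Mod(\D)$ (not inside $\D$), so one has to check that an equivalence of graded $K$-categories $\D_1\to \D_2$ extends to an equivalence $\Mod(\D_1)\to \Mod(\D_2)$ (which it does, by the standard extension along the Yoneda embedding), under which simple functors correspond to simple functors and Ext-groups match up. Once this step is in hand, the composition of equivalences above gives the proposition with no further work, using that the equivalences of Theorem \ref{thm:koszul}(g) and of the preceding proposition are both natural in the input category.
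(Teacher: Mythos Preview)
Your proposal is correct and follows essentially the same approach as the paper. The paper does not give a separate proof of this proposition; it is stated as a direct consequence of the preceding discussion, which invokes exactly the three ingredients you use: Proposition \ref{prop:weaklytoKoszul} (so $\A_\gr(\C)$ is Koszul), the immediately preceding proposition part (b) (so $E(\S(\C))\simeq E(\S(\A_\gr(\C)))$), and Theorem \ref{thm:koszul}(g) (the double Koszul dual of a Koszul category recovers the category). If anything, your write-up is more explicit than the paper's, since you spell out the functoriality step needed to transport the equivalence $E(\S(\C))\simeq E(\S(\A_\gr(\C)))$ through the outer $E(\S(-))^\op$.
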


We end this section with noting that when $\C$ is Koszul, then the
associated graded category is equivalent to $\C$.
\begin{prop}\label{prop:Koszul=assgraded}
Let $\C$ be a Koszul $K$-category. Then $\C$ and $\A_\gr(\C)$ are
equivalent graded $K$-categories.
\end{prop}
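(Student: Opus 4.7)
The plan is to exploit Lemma \ref{lem:genin0and1}. Since $\C$ is Koszul, part (a) of that lemma gives $\rad_\C(-,-) = \Hom_\C(-,-)_{\geq 1}$ and part (b) gives that $\C$ is generated in degrees $0$ and $1$; that is, the ideal $J = \amalg_{i\geq 1}\Hom_\C(-,-)_i$ satisfies $J^r = \amalg_{i\geq r}\Hom_\C(-,-)_i$ for all $r \geq 1$. Combining these two facts, I obtain the key identity
\[\rad_\C^r(A,B) = \Hom_\C(A,B)_{\geq r}\]
for every pair of objects $A,B$ in $\C$ and every $r \geq 0$ (with the convention that $\rad_\C^0 = \Hom_\C(-,-)$).

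From this identity the canonical vector space decomposition of $\Hom_\C(A,B)$ into its graded pieces gives a natural isomorphism
\[\rad_\C^i(A,B)/\rad_\C^{i+1}(A,B) \simeq \Hom_\C(A,B)_i\]
for each $i \geq 0$, since $\Hom_\C(A,B)_{\geq i} = \Hom_\C(A,B)_i \amalg \Hom_\C(A,B)_{\geq i+1}$ as $\C$ is positively graded. Summing over $i$, this yields a natural $K$-linear isomorphism
\[\Phi_{A,B}\colon \Hom_\C(A,B) \xrightarrow{\sim} \amalg_{i\geq 0} \rad_\C^i(A,B)/\rad_\C^{i+1}(A,B) = \Hom_{\A_\gr(\C)}(A,B),\]
sending a homogeneous element $f$ of degree $i$ to its class modulo $\rad_\C^{i+1}(A,B)$. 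By construction $\Phi_{A,B}$ is a degree zero isomorphism of graded vector spaces.

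Next I would define the candidate functor $\Phi \colon \C \to \A_\gr(\C)$ as the identity on objects and as $\Phi_{A,B}$ on morphism spaces. Since $\Phi$ is the identity on objects, preservation of identities is immediate. For compatibility with composition, let $f \in \Hom_\C(A,B)_i$ and $g \in \Hom_\C(B,C)_j$; then $gf \in \Hom_\C(A,C)_{i+j} = \rad_\C^{i+j}(A,C)$, and the definition of the product of morphisms in $\A_\gr(\C)$ (obtained via the product of the ideals $\rad_\C^i$ and $\rad_\C^j$) makes $\Phi(g)\Phi(f)$ equal to the class of $gf$ in $\rad_\C^{i+j}(A,C)/\rad_\C^{i+j+1}(A,C)$. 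This is precisely $\Phi(gf)$.

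The remaining task is to check that $\Phi$ is an equivalence of graded $K$-categories. Full faithfulness is immediate from the fact that each $\Phi_{A,B}$ is a degree zero isomorphism, and essential surjectivity is automatic since $\Phi$ is the identity on objects. The main (though mild) obstacle is really just the bookkeeping in verifying that the product of homogeneous components in $\C$ matches the multiplication in $\A_\gr(\C)$, but this reduces to the identity $\rad_\C^i \cdot \rad_\C^j \subseteq \rad_\C^{i+j}$, which is built into the definition of $\A_\gr(\C)$, together with the fact established above that no collapse occurs because $\rad_\C^r$ is exactly $\Hom_\C(-,-)_{\geq r}$.
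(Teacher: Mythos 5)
Your proposal is correct and follows essentially the same route as the paper: both deduce from Lemma \ref{lem:genin0and1} that $\rad_\C^r(A,B)=\Hom_\C(A,B)_{\geq r}$, identify each graded piece $\Hom_\C(A,B)_i$ with $\rad_\C^i(A,B)/\rad_\C^{i+1}(A,B)$, and conclude that the identity on objects extends to an equivalence of graded categories. Your write-up merely spells out the compatibility with composition that the paper leaves implicit.
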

\begin{proof}
The objects in $\C$ and $\A_\gr(\C)$ are the same. The homomorphisms
in $\C$ are graded vector spaces $\Hom_\C(C,D)=\amalg_{i\geq
0}\Hom_\C(C,D)_i$. By Lemma \ref{lem:genin0and1} we have
$\rad^i_\C(C,D)=\amalg_{j\geq i}\Hom_\C(C,D)_j$ for any $i\geq
0$. In particular the natural morphism
\[\Hom_\C(C,D)_i\to \rad^i_\C(C,D)/\rad^{i+1}_\C(C,D)\]
is an isomorphism for all objects $C$ and $D$ in $\C$. This is easily
seen to extend to an isomorphism
\[\Hom_\C(C,D)\simeq \amalg_{i\geq
  0}\rad^i_\C(C,D)/\rad^{i+1}_\C(C,D)\]
for all objects $C$ and $D$ in $\C$. Hence it follows that $\C$ and
$\A_\gr(\C)$ are equivalent graded $K$-categories.
\end{proof}

\section{Free tensor categories over a bimodule and Koszul duality}

Let $\C$ be an additive Krull-Schmidt $K$-category, where
$\rad_\C(-,C)$ is a finitely generated functor in $\Mod(\C)$ for all
$C$ in $\C$ and $\A_\gr(\C)$ is locally finite. In this section we
define a free tensor category $T(\C)$ associated to $\C$ over a
bimodule, which is such that if $\C$ is weakly Koszul or Koszul, then
$\A_\gr(\C)$ is a quotient of $T(\C)$ by an ideal $\I$ generated in
degree $2$. When $\C$ is weakly Koszul or Koszul, we show that the
Koszul dual of $\C$ is given by $T(\E(\C))$ modulo the orthogonal
relations of $\I_2$.

Let $\C$ be an additive Krull-Schmidt $K$-category, where
$\rad_\C(-,C)$ is a finitely generated functor in $\Mod(\C)$ for all
$C$ in $\C$ and $\A_\gr(\C)$ is locally finite. Then we define the
free tensor category $T(\C)$ as follows. The objects in $T(\C)$ are
the same as the objects in $\A_\gr(\C)$, and the morphisms in $T(\C)$
are given as
\[\Hom_{T(\C)}(X,Y)=\amalg_{i\geq 0} \Hom_{T(\C)}(X,Y)_i,\]
where
\[\Hom_{T(\C)}(X,Y)_i=
\begin{cases}
\Hom_\C(X,Y)/\rad_\C(X,Y),  & \text{if $i=0$,}\\
\begin{array}{c}
\amalg_{Y_1,\ldots,Y_{i-1}\in \C} \rad_\C(Y_{i-1},Y)/\rad_\C^2(Y_{i-1},Y)
\otimes_{D_{Y_{i-1}}}\\
\cdots\otimes_{D_{Y_2}}\rad_\C(Y_1,Y_2)/\rad_\C^2(Y_1,Y_2)\\ 
\otimes_{D_{Y_{1}}} \rad_\C(X,Y_1)/\rad_\C^2(X,Y_1), 
\end{array}  & \text{if $i\geq 1$,}
\end{cases}\]
viewed inside
\begin{multline}
\amalg_{Y_0,Y_1,\ldots,Y_{i-1},Y_i\in \C}
\rad_\C(Y_{i-1},Y_i)/\rad_\C^2(Y_{i-1},Y_i)\otimes_{D_{Y_{i-1}}}\notag\\
\cdots\otimes_{D_{Y_2}}\rad_\C(Y_1,Y_2)/\rad_\C^2(Y_1,Y_2)\notag\\
\otimes_{D_{Y_{1}}} \rad_\C(Y_0,Y_1)/\rad_\C^2(Y_0,Y_1),\notag
\end{multline}
where $D_Z=\Hom_\C(Z,Z)/\rad_\C(Z,Z)$ for $Z$ in $\C$. The
composition in $T(\C)$ is given by
\[(f_m\otimes\cdots \otimes f_2\otimes f_1)\comp (g_n\otimes\cdots 
\otimes g_2\otimes g_1) =
f_m\otimes\cdots\otimes f_2\otimes f_1\otimes g_n\otimes\cdots 
\otimes g_2\otimes g_1.\]
With these definitions $T(\C)$ is a locally finite graded $K$-category
with radical given by
\[\rad_{T(\C)}(X,Y)=\Hom_{T(\C)}(X,Y)_{\geq 1}.\]
Furthermore, we have a full and dense functor $\phi\colon T(\C)\to
\A_\gr(\C)$ given by
\[\phi(X)=X\]
for $X$ in $T(\C)$ and
\[\phi(f_m\otimes\cdots\otimes f_2\otimes
f_1)=\overline{f_mf_{m-1}\cdots f_2f_1}\]
in $\rad_\C^m(X,Y)/\rad_\C^{m+1}(X,Y)$. This functor is full and dense,
since $\A_\gr(\C)$ is a graded category generated in the degrees $0$
and $1$. The kernel of the functor $\phi$ is an ideal $\I$ in $T(\C)$
satisfying $\I\subseteq \rad^2_{T(\C)}$. Hence, the categories
$T(\C)/\I$ and $\A_\gr(\C)$ are equivalent. 

The next basic property of the construction of a free tensor category
over a bimodule is the following. As for tensor algebras over a
semisimple ring, the free tensor category $T(\C)$ is hereditary, as we
show next.
\begin{lem}
The category $\Gr(T(\C))$ is hereditary.
\end{lem}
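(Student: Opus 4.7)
The plan is to apply Theorem~\ref{thm:globaldim}(a) to $T(\C)$. The category $T(\C)$ is positively graded, with radical $\rad_{T(\C)}(-,-)=\Hom_{T(\C)}(-,-)_{\geq 1}$ by construction. It is Krull-Schmidt because each indecomposable $Z$ of $\C$ has as its degree-$0$ endomorphism ring the division ring $D_{Z}=\End_{\C}(Z)/\rad\End_{\C}(Z)$. It therefore suffices to show that every simple functor in $\Gr(T(\C)^{\op})$ has projective dimension at most~$1$. By Lemma~\ref{lem:simples} applied to $T(\C)^{\op}$, such a simple has, up to shift, the form $S_{C}=\Hom_{T(\C)}(C,-)/\rad_{T(\C)}(C,-)$ for some indecomposable $C$ in $\C$, and its canonical presentation
\[
0\to\rad_{T(\C)}(C,-)\to\Hom_{T(\C)}(C,-)\to S_{C}\to 0
\]
reduces the problem to showing that $\rad_{T(\C)}(C,-)$ is projective in $\Gr(T(\C)^{\op})$.

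The heart of the argument is to split off the innermost (rightmost) tensor factor in the free tensor description of $\Hom_{T(\C)}(C,Y)_{i+1}$. Writing $V(A,B)=\rad_{\C}(A,B)/\rad_{\C}^{2}(A,B)$, this yields a natural isomorphism
\[
\Hom_{T(\C)}(C,Y)_{i+1}\;\simeq\;\amalg_{Z}\Hom_{T(\C)}(Z,Y)_{i}\otimes_{D_{Z}}V(C,Z),
\]
where $Z$ runs over a set of representatives of isomorphism classes of indecomposable objects of $\C$. Summing over $i\geq 0$ and absorbing the resulting unit degree shift into a grading shift on the representables produces a natural isomorphism in $\Gr(T(\C)^{\op})$
\[
\rad_{T(\C)}(C,-)\;\simeq\;\amalg_{Z}\Hom_{T(\C)}(Z,-)[-1]\otimes_{D_{Z}}V(C,Z).
\]

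Finally, since each $D_{Z}$ is a division ring, $V(C,Z)$ is free as a left $D_{Z}$-module, and local finiteness of $\A_{\gr}(\C)$ forces its rank $n_{C,Z}=\dim_{D_{Z}}V(C,Z)$ to be finite. Substituting a $D_{Z}$-basis into the displayed formula exhibits $\rad_{T(\C)}(C,-)$ as a coproduct $\amalg_{Z}\Hom_{T(\C)}(Z,-)[-1]^{n_{C,Z}}$ of shifted representable functors, hence as a projective object of $\Gr(T(\C)^{\op})$, which completes the argument. The one delicate point I anticipate is verifying that the tensor decomposition of $\Hom_{T(\C)}(C,Y)_{i+1}$ is natural in $Y$ rather than a mere object-wise vector space isomorphism; this, however, follows directly from the associativity of the free concatenation of tensor chains that defines the composition law in $T(\C)$.
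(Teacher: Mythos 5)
Your proof is correct and follows essentially the same route as the paper: both reduce the claim, via Theorem \ref{thm:globaldim}, to showing that the radical of a representable functor is projective, and both exhibit that radical as a coproduct of shifted representables by peeling a single degree-one tensor factor off the free tensor description of $\Hom_{T(\C)}$. The only difference is one of variance --- the paper works with the contravariant functors $\Hom_{T(\C)}(-,C)$ and splits off the outermost factor $\rad_{T(\C)}(Z_i,C)/\rad^2_{T(\C)}(Z_i,C)$, whereas you work with $\Hom_{T(\C)}(C,-)$ and split off the innermost factor $\rad_\C(C,Z)/\rad_\C^2(C,Z)$, which is the mirror image of the same computation and, if anything, matches the literal hypothesis of Theorem \ref{thm:globaldim}(a) slightly more directly.
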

\begin{proof}
The simple functors in $\Gr(T(\C))$ are given as
$S_C=\Hom_{T(\C)}(-,C)/\rad_{T(\C)}(-,C)$ for an indecomposable object
$C$ in $T(\C)$. Hence we have the exact sequence
\[0\to \rad_{T(\C)}(-,C)\to \Hom_{T(\C)}(-,C)\to S_C\to 0.\]
We have that $\rad_{T(\C)}(-,C)$ is given by
\begin{multline}
\amalg_{n\geq 1}\amalg_{Y_1,\ldots,Y_{n-1},Y_n\in \C}
\rad_\C(Y_n,C)/\rad_\C^2(Y_n,C) \otimes_{D_{Y_n}}\notag\\
\cdots \otimes_{D_{Y_2}}\rad_\C(Y_1,Y_2)/\rad_\C^2(Y_1,Y_2)\notag\\
\otimes_{D_{Y_1}} 
\rad_\C(-,Y_1)/\rad_\C^2(-,Y_1),\notag
\end{multline}
The functor $\rad_{T(\C)}(-,C)/\rad^2_{T(\C)}(-,C)$ in $\Gr(T(\C))$ is
semisimple, so it is isomorphic to $\amalg_{i=1}^t S_{Z_i}$ for some
indecomposable objects $Z_i$ in $T(\C)$ and
\[\amalg_{Y_n\in T(\C)} \rad_{T(\C)}(Y_n,C)/\rad^2_{T(\C)}(Y_n,C)\simeq
\amalg_{i=1}^t \rad_{T(\C)}(Z_i,C)/\rad^2_{T(\C)}(Z_i,C).\]
Hence we infer that
\[\rad_{T(\C)}(-,C)=\amalg_{i=1}^t
\rad_{T(\C)}(Z_i,C)/\rad^2_{T(\C)}(Z_i,C)\otimes_{D_{Z_i}}\Hom_{\T(C)}(-,Z_i)\]
and conclude that $\rad_{T(\C)}(-,C)$ is a projective object in
$\Gr(T(\C))$. It follows from Theorem \ref{thm:globaldim} that
the category $\Gr(T(\C))$ is hereditary.
\end{proof}

As a consequence of the above considerations we obtain a
generalization of the classical result for Koszul algebras that they
are quadratic (see \cite[Corollary 2.3.3]{BGS} or \cite[Corollary
7.3]{GM2}).

\begin{prop}\label{prop:quadratic}
Let $\C$ be a Koszul $K$-category. Then the category $\C$ is
quadratic, that is, there exists an ideal $\I$ in $T(\C)$ generated in
degree $2$ such that $\C$ and $T(\C)/\I$ are equivalent categories.  
\end{prop}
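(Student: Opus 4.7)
The plan is to identify $\C$ with a quotient of $T(\C)$ and then show the kernel is generated in degree~$2$. By Proposition \ref{prop:Koszul=assgraded}, $\C$ is equivalent as a graded category to $\A_\gr(\C)$, and by the construction at the start of this section, $\phi\colon T(\C)\to\A_\gr(\C)$ is a full, dense graded functor whose kernel $\I$ satisfies $\I\subseteq \rad^2_{T(\C)}$. This already yields an equivalence $\C\simeq T(\C)/\I$, so what remains is to prove that the ideal $\I$ is generated in degree~$2$.

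Let $\I' = \langle \I_2\rangle$, so $\I'\subseteq\I$ and $\I'_i = \I_i$ for $i\leq 2$. I will prove by induction on $n$ that $\I_n = \I'_n$ for all $n\geq 2$; the case $n=2$ is immediate. For the induction step, fix an indecomposable $C\in\C$ and consider the Koszul resolution
\[\cdots\to \Hom_\C(-,C_2)[-2]\to \Hom_\C(-,C_1)[-1]\xrightarrow{W\circ -} \Hom_\C(-,C)\to S_C\to 0,\]
in which $W\in\Hom_\C(C_1,C)_1 = T(\C)(C_1,C)_1$ is the matrix of degree-$1$ morphisms defining the projective cover of $\rad_\C(-,C)$. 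Linearity of this resolution forces $\Omega^2_\C(S_C)$ to be generated in degree~$2$.

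The first critical fact is that composition with $W$ induces a surjection
\[W\circ -\colon T(\C)(X,C_1)_{n-1}\twoheadrightarrow T(\C)(X,C)_n\]
for every $X$ and every $n\geq 1$; this follows because $C_1$ is the projective cover of the top of $\rad_\C(-,C)$, so $W$ generates $V_1(-,C) = \rad_\C(-,C)/\rad_\C^2(-,C)$ as a left module over $\C_0$, and an iterated tensor in $T(\C)(X,C)_n$ can be rewritten by absorbing its outermost factor into the $C_1$-summand. Hence any $\alpha\in\I_n(X,C)$ may be lifted as $\alpha = W\circ\tilde\beta$ for some $\tilde\beta\in T(\C)(X,C_1)_{n-1}$. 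The image $\beta$ of $\tilde\beta$ in $\Hom_\C(X,C_1)_{n-1}$ satisfies $W\circ\beta = 0$, hence lies in $\Omega^2_\C(S_C)(X)_n$. By linearity,
\[\beta = \sum_i k_i\circ g_i,\qquad k_i\in \Omega^2_\C(S_C)(Y_i)_2,\ g_i\in\Hom_\C(X,Y_i)_{n-2},\]
which lifts to $\tilde\beta = \sum_i \tilde k_i\circ \tilde g_i + \gamma$ with $\gamma\in\I_{n-1}(X,C_1)$ and $W\circ\tilde k_i\in\I_2(Y_i,C)$ for each $i$. Applying $W\circ -$,
\[\alpha = \sum_i (W\circ\tilde k_i)\circ \tilde g_i + W\circ \gamma,\]
and both summands lie in $\I'$: the first since $W\circ\tilde k_i\in\I_2\subseteq\I'$, and the second since $\gamma\in\I_{n-1} = \I'_{n-1}$ by the inductive hypothesis. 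Thus $\alpha\in\I'_n$, closing the induction.

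The main obstacle will be justifying the displayed surjectivity $W\circ -\colon T(\C)(X,C_1)_{n-1}\twoheadrightarrow T(\C)(X,C)_n$ at the tensor-category level: while the analogous surjectivity in $\A_\gr(\C)$ follows directly from the Koszul property (since $\Hom_\C(-,C_1)[-1]\to\rad_\C(-,C)$ is a projective cover), establishing it in $T(\C)$ itself requires unwinding the definition of $T(\C)$ as iterated tensor products of $V_1 = \rad_\C/\rad^2_\C$ over the division-ring categories $D_Y$ and absorbing successive factors through the $C_1$-summand via the left-generating property of $W$.
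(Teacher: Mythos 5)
Your proof is correct, but it takes a genuinely different route from the paper's. The paper argues structurally: it invokes the previous lemma that $\Gr(T(\C))$ is hereditary to see that $\rad_{T(\C)}(-,C)$ is projective, writes down the Butler resolution
$0\to \I(-,C)/\I\rad_{T(\C)}(-,C)\to \rad_{T(\C)}(-,C)/\I\rad_{T(\C)}(-,C)\to \Hom_{\A_\gr(\C)}(-,C)\to S_C\to 0$
as the start of a minimal projective resolution, identifies $\Omega^2(S_C)=\I(-,C)/\I\rad_{T(\C)}(-,C)$ and hence $\Omega^2(S_C)/\rad_\C\Omega^2(S_C)\simeq \I(-,C)/(\rad_{T(\C)}\I(-,C)+\I\rad_{T(\C)}(-,C))$, and then uses Koszulity (which forces this top to be concentrated in degree $2$, so equal to $\I_2(-,C)$) to conclude that $\I$ is generated in degree $2$. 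You instead run an element-wise induction on the degree of elements of $\I$, using the surjection $W\circ - \colon T(\C)(X,C_1)_{n-1}\to T(\C)(X,C)_n$ and the degree-$2$ generation of $\Omega^2(S_C)$ to rewrite each $\alpha\in\I_n$ in terms of $\I_2$ and $\I_{n-1}$. Both arguments hinge on exactly the same input --- linearity of the resolution of $S_C$ through the second syzygy --- but yours avoids the hereditary lemma and the Butler resolution entirely, at the cost of the tensor-level bookkeeping you flag at the end. That obstacle is real but surmountable: since $\rad_\C(-,C)/\rad^2_\C(-,C)$ is supported only on the indecomposable summands $Z_i$ of $C_1$ and is generated by the components of $W$ over the division rings $D_{Z_i}$, one has $T(\C)(X,C)_n=\amalg_i V(Z_i,C)\otimes_{D_{Z_i}}T(\C)(X,Z_i)_{n-1}$ with $V=\rad_\C/\rad^2_\C$, which is precisely the image of $W\circ -$; this is the standard tensor-algebra computation. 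The trade-off is that the paper's route produces the explicit isomorphism $\Omega^2(S_C)/\rad_\C\Omega^2(S_C)\simeq\I_2(-,C)$, which it reuses in the subsequent proposition on the orthogonal relations defining $E(\C)$, whereas your route yields only the quadraticity statement itself.
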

\begin{proof}
When $\C$ is a Koszul $K$-category, $\C$ and $\A_\gr(\C)$ are
equivalent graded $K$-categories by Proposition
\ref{prop:Koszul=assgraded}. Suppose $\A_\gr(\C)$ is equivalent to
$T(\C)/\I$. Then, for any simple functor $S_C$ in $\Gr(\A_\gr(\C))$,
we have the \emph{Butler resolution} of $S_C$ 
\begin{multline}
0\to \I(-,C)/\I\rad_{T(\C)}(-,C)\to
\rad_{T(\C)}(-,C)/\I\rad_{T(\C)}(-,C)\notag\\
\to \Hom_{\A_\gr(\C)}(-,C)\to S_C\to 0,\notag
\end{multline}
which is a start of a minimal projective resolution of $S_C$ in
$\Gr(\A_\gr(\C))$. Since $\Gr(T(\C))$ is hereditary, the functor
$\rad_{T(\C)}(-,C)$ is a projective functor and we have seen that it
is isomorphic to 
\[\amalg_{i=1}^t\rad_{T(\C)}(Z_i,C)/\rad^2_{T(\C)}(Z_i,C)
\otimes_{D_{Z_i}} \Hom_{T(\C)}(-,Z_i)
\] 
for some indecomposable objects $Z_i$ in $T(\C)$. This implies that we
have the following 
\begin{align}
\rad_{T(\C)}(-,C)/\I\rad_{T(\C)}(-,C) & \simeq
{\Hom_{T(\C)}(-,-)}/I(-,-)\otimes_{T(\C)}\rad_{T(\C)}(-,C)\notag\\
& \simeq {\Hom_{\A_\gr(\C)}(-,-)}\otimes_{T(\C)} \rad_{T(\C)}(-,C)\notag\\
& \simeq \amalg_{i=1}^t{R_{T(\C)}^1(Z_i,C)\otimes_{D_{Z_i}}
  \Hom_{\A_\gr(\C)}(-,Z_i)},\notag 
\end{align}
where $R^1_{T(\C)}(Z_i,C)=\rad_{T(\C)}(Z_i,C)/\rad_{T(\C)}^2(Z_i,C)$.  
Moreover we have that $\Omega^2(S_C)=\I(-,C)/\I\rad_{T(\C)}(-,C)$ and
\[\rad_\C\Omega^2(S_C)=(\rad_{T(\C)}\I(-,C)+
\I\rad_{T(\C)}(-,C))/\I\rad_{T(\C)}(-,C).\]
From this we obtain that
\begin{align}
\Omega^2(S_C)/\rad_\C\Omega^2(S_C) & \simeq
\I(-,C)/(\rad_{T(\C)}\I(-,C)+\I\rad_{T(\C)}(-,C))\notag\\
& = \I_2(-,C)\notag
\end{align}
Since $\A_\gr(\C)$ is a Koszul category,
$\Omega^2(S_C)$ is generated in degree $2$, which in turn implies that
the ideal $\I$ is generated by $\I_2(-,-)$ as a two-sided ideal in
$T(\C)$.
\end{proof} 

Given a Krull-Schmidt category $\C$, form the $\Ext$-category
$\E(\C)$ as in Example \ref{exam:extcat} in Section
\ref{section1}. Then $\E(\C)$ is a Krull-Schmidt category again, and
we can form the free tensor category $T(\E(\C))$ of $\E(\C)$ as
above. The category $\E(\C)$ has as indecomposable objects the simple
functors $S_C$ for $C$ indecomposable in $\C$, and morphisms are given
by
\[\Hom_{\E(\C)}(S_X,S_Y)=\amalg_{i\geq 0} \Ext^i_{\Mod(\C)}(S_X,S_Y).\]
The radical in $\E(\C)$ is given by
$\rad_{\E(\C)}(S_X,S_Y)=\amalg_{i\geq 1} \Ext^i_{\Mod(\C)}(S_X,S_Y)$, so
that
\[\rad_{\E(\C)}(S_X,S_Y)/\rad^2_{\E(\C)}(S_X,S_Y)=\Ext^1_{\Mod(\C)}(S_X,S_Y).\]
Since $\E(\C)$ is a graded category with
$\Hom_{\E(\C)}(S_X,S_Y)_i=\rad^i_{\E(\C)}(S_X,S_Y)/\rad^{i+1}_{\E(\C)}(S_X,S_Y)$,
the associated graded category of $\E(\C)$ is the same as
$\E(\C)$. Therefore, as pointed out above, there is a full and dense
functor $\phi\colon T(\E(\C))\to \E(\C)$ with a kernel $\I'$ contained
in $\rad^2_{T(\E(\C))}$. Furthermore, $T(\E(\C))/\I'$ and $\E(\C)$ are
equivalent graded categories. Our next aim is to show that the ideal
$\I'$ of relations in $T(\E(\C))$ is obtained as "the orthogonal
relations" of the relations for the presentation of $\A_\gr(\C)$ as a
quotient $T(\C)/\I$, whenever $\A_\gr(\C)$ is a Koszul category. See
\cite[Corollary 2.3.3]{BGS} or \cite[Corollary 7.3]{GM1} for
corresponding result for algebras. 

To justify the claim in the proposition below observe that for $C$ and 
$Z$ indecomposable in $\C$ the functor
$U=D(\rad_{\A_\gr(\C)}(-,C)/\rad^2_{\A_\gr(\C)}(-,C))$ is isomorphic to
\begin{align}
U 
& \simeq D(\Hom_\C(-,-)/\rad_\C(-,-)\otimes_\C\rad_{\A_\gr(\C)}(-,C)) \notag\\
& \simeq \Hom(\rad_{\A_\gr(\C)}(-,C),D(\Hom_\C(-,-)/\rad_\C(-,-)))\notag\\
& \simeq \Hom(\rad_{\A_\gr(\C)}(-,C),\Hom_\C(-,-)/\rad_\C(-,-))\notag\\
& \simeq
\Hom(\rad_{\A_\gr(\C)}(-,C)/\rad^2_{\A_\gr(\C)}(-,C),\Hom_\C(-,-)/\rad_\C(-,-)), 
\notag  
\end{align}
and evaluating this isomorphism at $Z$ we get 
\[D(\rad_{\A_\gr(\C)}(Z,C)/\rad^2_{\A_\gr(\C)}(Z,C))   
\simeq \Ext^1_{\Mod(\C)}(S_C,S_Z).\]
From this we infer that 
\[D(\rad_{\A_\gr(\C)}(-,Z)/\rad^2_{\A_\gr(\C)}(-,Z)) \otimes_{D_{Z}} 
D(\rad_{T(\C)}(Z,C)/\rad^2_{T(\C)}(Z,C)) 
\] 
can be viewed as 
\[\Ext^1_{\Mod(\C)}(S_{Z},S_{(-)})\otimes_{D_{Z}} \Ext^1_{\Mod(\C)}(S_C,S_{Z})\]
that is, contained in $\rad_{T(E(\C))}^2(S_C,-)$. 

\begin{prop}
Let $\C$ be a Krull-Schmidt category, and assume that $\A_\gr(\C)$ is
Koszul. If $\A_\gr(\C)$ is equivalent to $T(\C)/\I$, then $\E(\C)$ is
equivalent to $T(\E(\C))/\I'$, where $\I'=\langle \I_2^\perp\rangle$
with $\I_2^\perp(-,C)$ being given as the kernel of the natural morphism
\[
\begin{array}{c}
\amalg_{i=1}^t D(\rad_{\A_\gr(\C)}(-,Z_i)/\rad^2_{\A_\gr(\C)}(-,Z_i))%
\otimes_{D_{Z_i}}D(\rad_{T(\C)}(Z_i,C)/\rad^2_{T(\C)}(Z_i,C)) \\
\downarrow \\
 D(\I_2(-,C)),
\end{array}
\]
for each indecomposable object $C$ in $\C$ and some indecomposable
objects $Z_i$ in $\C$ for $i=1,2,\ldots,t$. Here
$D_{Z}=\End_\C(Z)/\rad_\C(Z,Z)$.
\end{prop}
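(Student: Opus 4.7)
The plan is to reduce the problem to identifying the degree-$2$ relations in $\E(\C)$, and then to compute them using the Butler-type resolution of $S_C$ over $\A_\gr(\C) \simeq T(\C)/\I$ together with the duality $D$ of Proposition \ref{prop:duality}.

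First I would observe that since $\A_\gr(\C)$ is Koszul, Theorem \ref{thm:koszul}(f) says its Koszul dual is Koszul, and Proposition \ref{prop:quadratic} then gives that the Koszul dual is quadratic. In the present setup this Koszul dual is $\E(\C)$, so the ideal $\I'$ in the natural presentation $T(\E(\C))/\I' \simeq \E(\C)$ is generated in degree $2$. It therefore suffices to verify $\I'_2 = \I_2^\perp$.

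Next I would identify the source and target of the defining map of $\I_2^\perp$ with the degree-$2$ data of $T(\E(\C))$ and $\E(\C)$. Since $\I \subseteq \rad^2_{T(\C)}$, in degree $1$ the radical quotients in $T(\C)$ and $\A_\gr(\C)$ agree, so the natural isomorphism $D(\rad_{\A_\gr(\C)}(Z,C)/\rad^2_{\A_\gr(\C)}(Z,C)) \simeq \Ext^1_{\Mod(\C)}(S_C, S_Z)$ recalled just before the proposition rewrites the source as
\[\amalg_{i=1}^t \Ext^1_{\Mod(\C)}(S_{Z_i}, -) \otimes_{D_{Z_i}} \Ext^1_{\Mod(\C)}(S_C, S_{Z_i}),\]
which is precisely $T(\E(\C))(S_C, -)_2$, the $Z_i$ arising from the simple summands of $\rad_{T(\C)}(-, C)/\rad^2_{T(\C)}(-, C)$. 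For the target, the proof of Proposition \ref{prop:quadratic} shows that $\Omega^2(S_C) = \I(-,C)/\I\rad_{T(\C)}(-,C)$ has top equal to $\I_2(-, C)$; since $\I_2(-, C)$ is concentrated in a single degree it is already semisimple, whence $\Ext^2_{\Mod(\C)}(S_C, -) \simeq D(\I_2(-, C))$, matching the target in the statement.

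Finally I would verify that under these identifications the map displayed in the statement is the Yoneda composition $T(\E(\C))(S_C, -)_2 \to \Ext^2_{\Mod(\C)}(S_C, -)$, whose kernel is by definition $\I'_2(-, C)$. This reduces to checking that the inclusion $\I_2(-,C) \hookrightarrow \rad^2_{T(\C)}(-,C)/\I\rad_{T(\C)}(-,C)$, composed with the decomposition of the latter as $\amalg_i \rad_{T(\C)}(Z_i,C)/\rad^2_{T(\C)}(Z_i,C) \otimes_{D_{Z_i}} \Hom_{\A_\gr(\C)}(-,Z_i)$ established in the proof of Proposition \ref{prop:quadratic}, dualizes on the nose to the map in the statement. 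The main obstacle is precisely this compatibility, in particular respecting the $D_{Z_i}$-bimodule structures on both sides; once it is established, equality of the two kernels yields $\I'_2(-, C) = \I_2^\perp(-, C)$, and hence $\I' = \langle \I_2^\perp \rangle$, completing the proof.
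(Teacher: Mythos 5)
Your proposal is correct and follows essentially the same route as the paper: reduce to degree $2$ via quadraticity of the Koszul dual, use the Butler resolution from Proposition \ref{prop:quadratic} together with the duality $D$ to identify $\amalg_i D(R^1)\otimes_{D_{Z_i}}D(R^1)$ with $T(\E(\C))(S_C,-)_2$ and $D(\I_2(-,C))$ with $\Ext^2_{\Mod(\C)}(S_C,-)$, and recognize the dualized inclusion as the Yoneda product whose kernel is $\I'_2$. The only cosmetic difference is that the paper packages the computation as a Snake Lemma argument producing the short exact sequence $0\to\I_2(-,C)\to\amalg_i R^1_{T(\C)}(Z_i,C)\otimes R^1_{\A_\gr(\C)}(-,Z_i)\to R^2_{\A_\gr(\C)}(-,C)\to 0$ and dualizes it wholesale, and it leaves the final Yoneda-product compatibility at the same level of detail that you flag as the remaining obstacle.
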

\begin{proof}
There are long formulas in the proof of this result. So to save space
we use the following short versions of the following spaces when
convenient. Let $R_{?}^i(-,-)=\rad^i_{?}(-,-)/\rad^{i+1}_{?}(-,-)$,
where $?$ denotes the category where the radical is given. Morphism
spaces $\Hom_{?}(-,-)$ are shorten to $_{?}(-,-)$. 

Suppose $\A_\gr(\C)$ is equivalent to $T(\C)/\I$.  We want to
describe the relations $\I'$ in $\T(\E(\C))$  such that $\E(\C)$ is
equivalent to $\T(\E(\C))/\I'$. To this end consider the following
commutative diagram. The first column is obtained for an
indecomposable object $C$ in $\C$ as in the proof of Proposition
\ref{prop:quadratic}. 
\[\xymatrix@R=20pt@C=15pt{
0\ar[d] & 0\ar[d]\\
\I(-,C)/\I\rad_{T(\C)}(-,C)\ar[d]\ar[r] &
\amalg_{i=1}^tR_{T(\C)}^1(Z_i,C)\otimes_{D_{Z_i}}\rad_{\A_\gr(\C)}(-,Z_i) \ar[d]\\
\amalg_{i=1}^tR^1_{T(\C)}(Z_i,C)\otimes_{D_{Z_i}} {_{\A_\gr(\C)}(-,Z_i)}
\ar[d]\ar@{=}[r] & 
\amalg_{i=1}^tR_{T(\C)}^1(Z_i,C)\otimes_{D_{Z_i}} {_{\A_\gr(\C)}(-,Z_i)} \ar[d] \\
\rad_{\A_\gr(\C)}(-,C)\ar[d]\ar[r] & R_{\A_\gr(\C)}^1(-,C)\ar[d]\\
0 & 0
}\]
where the middle horizontal morphism is the identity, the lower
horizontal morphism is the natural projection, and the upper
horizontal morphism is the induced inclusion. 
The Snake Lemma then gives rise to the exact sequence
\begin{multline}
0\to \I(-,C)/\I\rad_{T(\C)}(-,C)\to\notag\\
\amalg_{i=1}^tR_{T(\C)}^1(Z_i,C)\otimes_{D_{Z_i}} \rad_{\A_\gr(\C)}(-,Z_i)
\to\notag\\
\rad^2_{\A_\gr(\C)}(-,C)\to 0\notag
\end{multline}
Since $\A_\gr(\C)$ is Koszul, we have that
\[\rad^2(\rad_{T(\C)}(-,C)/\I\rad_{T(\C)}(-,C))\cap \Omega^2(S_C)=
\rad\Omega^2(S_C),\]
hence
\begin{multline}
\left(\amalg_{i=1}^tR_{T(\C)}^1(Z_i,C)\otimes_{D_{Z_i}}\rad^2_{\A_\gr(\C)}(-,Z_i)
\right)\cap \I(-,C)/\I\rad_{T(\C)}(-,C)\notag\\
\simeq (\rad_{T(\C)}\I(-,C)+\I\rad_{T(\C)}(-,C))/\I\rad_{T(\C)}(-,C)\notag
\end{multline}
From this we obtain the exact sequence
\[0\to \I_2(-,C)\to
\amalg_{i=1}^t R^1_{T(\C)}(Z_i,C)\otimes_{D_{Z_i}}R_{\A_\gr(\C)}^1(-,Z_i)
\to R_{\A_\gr(\C)}^2(-,C)\to 0\]
This is a sequence of semisimple functors, so that applying
$\Hom(-,S_X)$ we get the following exact sequence
\begin{multline}
0\to \Hom(R_{\A_\gr(\C)}^2(-,C),,S_X)\to \notag\\
\Hom(\left(\amalg_{i=1}^tR^1_{T(\C)}(Z_i,C)\otimes_{D_{Z_i}} 
R_{\A_\gr(\C)}^1(-,Z_i)\right),S_X)\to\notag\\
\Hom(\I_2(-,C),S_X)\to 0\notag
\end{multline}
Using Proposition \ref{prop:duality} (c), the fact that $D(S_X)\simeq
S_X^\op$ and Lemma
\ref{lem:tensormoduloideal}, this sequence is isomorphic to the
sequence
\begin{multline}
0\to D(R_{\A_\gr(\C)}^2(X,C))
\to\tag{$\ddagger$}\label{seq:orthrel}\\
\amalg_{i=1}^tD(R^1_{T(\C)}(Z_i,C) \otimes_{D_{Z_i}} R_{\A_\gr(\C)}^1(X,Z_i))
\to \notag\\
D(\I_2(X,C))\to 0.\notag
\end{multline}
The middle term of this sequence is isomorphic to 
\[D(R_{\A_\gr(\C)}^1(X,Z_i))\otimes_{D_{Z_i}}D(R^1_{T(\C)}(Z_i,C)).\]
As we observed before this proposition, we have
\[D(R_{\A_\gr(\C)}^1(Z_i,C)) \simeq \Ext^1_{\Mod(\C)}(S_C,S_{Z_i})\]
for all $i=1,2,\ldots,t$ and furthermore
\begin{align}
\Ext^2_{\Mod(\C)}(S_C,S_X) & \simeq \Hom(\I(-,C)/\I\rad_{T(\C)}(-,C),S_X)\notag\\
& \simeq \Hom(\I_2(-,C),S_X)\notag\\
& \simeq D(\I_2(X,C)).\notag
\end{align}
With these identifications we can rewrite the sequence \eqref{seq:orthrel} as
\begin{multline}
0\to D(R_{\A_\gr(\C)}^2(X,C))\to\notag\\
\amalg_{i=1}^t\Ext^1_{\Mod(\C)}(S_{Z_i},S_X)\otimes_{D_{Z_i}}
\Ext^1_{\Mod(\C)}(S_C,S_{Z_i})\to\notag\\
\Ext^2_{\Mod(\C)}(S_C,S_X)\to 0,\notag
\end{multline}
and the non-zero epimorphism in the exact sequence \eqref{seq:orthrel}
corresponds to the Yoneda product in the latter exact
sequence. This shows that
$\I^\perp_2=\I'_2$, where $\I'$ is the kernel of the functor
$T(\E(\C))\to \E(\C)$. Since $\E(\C)$ is Koszul, the ideal $\I'$ is
generated in degree $2$, we have $\I'=\langle \I'_2\rangle = 
\langle \I^\perp_2\rangle$.
\end{proof}

\end{document}